 \newtheorem{alg}{Algorithm}[section]
 \newtheorem{thm}{Theorem}[section]
 \newtheorem{cor}[thm]{Corollary}
 \newtheorem{lem}[thm]{Lemma}
 \newtheorem{prop}[thm]{Proposition}
 \theoremstyle{definition}
 \newtheorem{defn}[thm]{Definition}
 \theoremstyle{remark}
 \numberwithin{equation}{section}
\begin{document}

%
%
%
%
%
%
%
%
%

\newcommand*\HYPERskip{&}
\catcode`,\active
\newcommand*\pPq{
\begingroup
\catcode`\,\active
\def ,{\HYPERskip}%
\doHyper
}
\catcode`\,12
\def\doHyper#1#2#3#4#5{%
\, _{#1}\Phi_{#2}\left[\begin{matrix}#3 \smallskip \\  #4\end{matrix} \; ; \; #5\right]%
\endgroup
}

\newcommand{\nkand}[2]{{#1}_1 & {#1}_2 & \dots & {#1}_{#2}}
\newcommand{\nklist}[2]{{#1}_1, {#1}_2, \dots, {#1}_{#2}}
\newcommand{\nkplus}[2]{{#1}_1 + {#1}_2 + \cdots + {#1}_{#2}}
\newcommand{\nkprod}[2]{{#1}_1  {#1}_2  \cdots  {#1}_{#2}}
\newcommand{\nkprex}[2]{{#1}_1^{m_1}  {#1}_2^{m_2}  \cdots  {#1}_{#2}^{m_{#2}}}
\newcommand{\nktext}[2]{{${#1}_1$, ${#1}_2$, \dots, ${#1}_{#2}$}}
\newcommand{\bcklist}[3]{{#1}_1, {#2}_1, {#1}_2, {#2}_2, \dots, {#1}_{#3}, {#2}_{#3}}
\newcommand{\bckand}[3]{{#1}_1& {#2}_1& \dots & {#1}_{#3}& {#2}_{#3}}
\newcommand{\abcklist}[4]{{\tfrac{#1}{#2_1}}, {\tfrac{#1}{#3_1}}, \dots, {\tfrac{#1}{#2_{#4}}}, {\tfrac{#1}{#3_{#4}}}}
\newcommand{\abckand}[4]{{\frac{#1}{#2_1}} & {\frac{#1}{#3_1}} & \dots & {\frac{#1}{#2_{#4}}} & {\frac{#1}{#3_{#4}}}}

\newcommand{\ovr}{\frac{(-q;q)_\infty}{(q;q)_\infty}}
\newcommand{\tovr}{\tfrac{(-q;q)_\infty}{(q;q)_\infty}}
\newcommand{\aqprod}[3]{({#1};{#2})_{#3}}
\newcommand{\os}{\overline{\sigma}}
\newcommand{\oo}{\overline}

\newcommand{\abarray}{\begin{pmatrix}
\nkand{\alpha}{k}\\
\nkand{\beta}{k}
\end{pmatrix}}

\newcommand{\Rk}{\overline{R[k]}(z,q)}
\newcommand{\RkII}{\overline{R[2k]}(z,q)}
\newcommand{\Ck}{\overline{R[k]}(z,q)}
\newcommand{\Frobk}{\overline{R}_k(\nklist{x}{k};q)}
\newcommand{\FrobkII}{\overline{R2}_k(\nklist{x}{k};q)}

\newcommand{\frobarray}{\begin{pmatrix}
\lambda_1 &  \lambda_2 &  \cdots  & \lambda_{k}\\
\mu_1 &  \mu_2 &  \cdots  &  \mu_k\\
\end{pmatrix}}

\newcommand{\CC}{\mathbb C}
\newcommand{\RR}{\mathbb R}
\newcommand{\QQ}{\mathbb Q}
\newcommand{\ZZ}{\mathbb Z}
\newcommand{\FF}{\mathbb F}
\newcommand{\NN}{\mathbb{N}_0}

\newcommand{\ninf}{\mathbb{N}_\infty}

\title[Buffered Frobenius Representations]
 {Two Families of Buffered Frobenius Representations of Overpartitions}

\author[Morrill]{Thomas Morrill}

\address{%
Kidder Hall 368\\
Oregon State University\\
Corvallis, OR, 97331\\
United States}

\email{morrillt@math.oregonstate.edu}


\subjclass{Primary 11P81; Secondary 05A17}

\keywords{basic hypergeometric series, overpartitions, rank, conjugation, Frobenius symbols}

\date{\today}

\begin{abstract}
We generalize the generating series of the Dyson ranks and $M_2$-ranks of overpartitions  to obtain $k$-fold variants, and give a combinatorial interpretation of each.
The $k$-fold generating series correspond to the \emph{full ranks} of two families of \emph{buffered Frobenius representations}, which generalize Lovejoy's first and second Frobenius representations of overpartitions, respectively.
\end{abstract}

\maketitle

\section{Introduction and Statement of Results}

A \emph{partition}  of $n$ is a nonincreasing sequence of integers $\lambda =(\nklist{\ell}{k})$ such that the sum of the $\ell_i$ equals $n$.
Each of the $\ell_i$ is called a \emph{part} of $\lambda$.
We use the term \emph{partition statistic} loosely to refer to any integer valued function on the set of partitions.
For example, the \emph{weight} of an arbitrary partition $\lambda$ is the sum of its parts,
\begin{align*}
|\lambda| := \sum_{i=0}^k \ell_i.
\end{align*}
We   use $\ell(\lambda)$ to denote the largest part of $\lambda$, and $\#(\lambda)$ to denote the number of parts of $\lambda$.

Historically, the theory of partition ranks was developed to give combinatorial evidence for the Ramanujan congruences, which state that for all $n\geq 0$,
		\begin{align}
			p(5n+4) &\equiv 0 \pmod{5}, \label{eqn:R5}\\
			p(7n+5) &\equiv 0 \pmod{7}, \label{eqn:R7}\\
			p(11n+6) &\equiv 0 \pmod{11}, \label{eqn:R11}
		\end{align}
where $p(n)$ denotes the number of partitions of $n$.
Given a partition $\lambda$, Dyson \cite{Dyson} defined the \emph{rank} of $\lambda$ to be
\begin{align*}
	r(\lambda) := \ell(\lambda) - \#(\lambda),
\end{align*}
that is,
the largest part of $\lambda$ minus the number of parts of $\lambda$. For example, the partitions of $4$ are given with their ranks in Table \ref{tbl:partns4}.
Note that $p(4)=5$, which agrees with \eqref{eqn:R5}.

		\begin{table}[h]
		\begin{tabular}{|l|c|c|c|c|c|}
			\hline
			$\lambda$  & $(4)$ & $(3,1)$ & $(2,2)$ & $(2,1,1)$ & $(1,1,1,1)$\\
			\hline
			$r(\lambda)$ & $3$ & $1$ & $0$ & $-1$ & $-3$\\
			\hline
		\end{tabular}
		\caption{Ranks of the partitions of $4$. \label{tbl:partns4}}
		\end{table}

 Moreover, each equivalence class of $\ZZ/5\ZZ$ appears exactly once in the second row of Table  \ref{tbl:partns4}.
Atkin and Swinnerton-Dyer \cite{ASDrank} proved that for all $n\geq 0$ and all $i, j \in \ZZ$,
		\begin{align}\label{cong}
			N(i,5n+4,5) = N(j,5n+4,5),
		\end{align}
where  $N(m,n,k)$ denotes the number of partitions of $n$ with rank $m$ modulo $k$.
Consequently, the set of partitions of  $5n+4$ can be separated into five classes of equal size by their ranks, which  proves \eqref{eqn:R5}  via a counting argument.
Atkin and Swinnerton-Dyer also proved that
\begin{align*}
	N(i,7n+5,7) = N(j,7n+5,7),
\end{align*}
 which  treats \eqref{eqn:R7} similarly.
However, it is easy to confirm that
\begin{align*}
	N(i,11n+6,11) = N(j,11n+6,11)
\end{align*}
does not even hold for $n = 0$.
A counting argument for \eqref{eqn:R11} was later found by using the partition \emph{crank} function, which was predicted by Dyson \cite{Dyson} and later defined by Andrews and Garvan \cite{AGCrank}.

We now generalize. An \emph{overpartition} is a nonincreasing sequence of positive integers $\lambda =(\nklist{\ell}{k})$, where the first occurrence of each part may be overlined. 
For example, the fourteen overpartitions of $4$ are given by
		\begin{align*}
		\begin{array}{ccccc}
			(4) &(\overline{4})
			&(3,1) & (\overline{3},\overline{1})
			 &(3, \overline{1}) \\ (\overline{3} , 1)
			&(2,2) & (\overline{2},2)
			&(2,1,1)& (\overline{2},\overline{1},1 )\\
			 (2 , \overline{1},1) &( \overline{2},1,1)
			&(1,1,1,1) & (\overline{1},1,1,1).
		\end{array}
		\end{align*}
Since every partition is an overpartition, we retain the notation $|\lambda|$, $\ell(\lambda)$, and $\#(\lambda)$ for the weight, largest part, and number of parts of an overpartition $\lambda$, respectively.

It is useful to represent partitions or  overpartitions graphically as  arrays of boxes. The \emph{Young tableau} of a partition or overpartition $\lambda = (\nklist{\ell}{k})$ is a left aligned array where the $i$th row of the array consists of $\ell_i$ boxes.
For overpartitions, if the first occurrence of the integer $\ell$ is overlined in $\lambda$, then we mark the last row of $\ell$ boxes with a dot\footnote{This convention ensures that mirroring the diagram across its main diagonal will produce the Young tableau of another overpartition, more commonly known as \emph{conjugating} the overpartition.}.
An example is given in Figure $\ref{Young}$.

\begin{figure}[h]
\includegraphics[scale=.5]{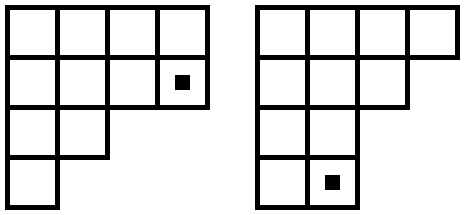}
\label{Young}
\caption{The Young tableau for $(\overline{4},4,2,1)$ and its conjugate, $(4, 3, \overline{2}, 2)$.}
\end{figure}

Because these objects generalize partitions, it is natural to ask if partition statistics can be extended to overpartitions in a meaningful way. We begin by recapping some results for overpartition ranks. The full proofs are given in work of Lovejoy \cite{LovejoyD} \cite{LovejoyM2}.

The \emph{Dyson rank} of an overpartition $\lambda$ is defined to be
\begin{align*}
	\oo{r}_D(\lambda) = \ell(\lambda) - \#(\lambda),
\end{align*}
 an extension of Dyson's rank function for ordinary partitions. For example, if $\lambda = (\overline{4},4,2,1)$, then $\oo{r}_D(\lambda)=0$. We see the generating series for the Dyson ranks of overpartitions in the following theorem.

\begin{thm}[Lovejoy \cite{LovejoyD}] \label{L1thm}
	The coefficient of $z^mq^n$ in the series
	\begin{align} \label{L1}
	\overline{R[1]}(z,q) := \ovr \left( 1 + 2 \sum_{n \geq 1} \frac{(1-z)(1-z^{-1})(-1)^nq^{n^2+n}}{(1-zq^{n})(1-z^{-1}q^{n})} \right)
	\end{align}
	is equal to the number of overpartitions $\lambda$ with $|\lambda| = n$ and $\oo{r}_D(\lambda) = m$.
\end{thm}

Lovejoy  also developed an $M_2$-rank for overpartitions \cite{LovejoyM2}, which expands on Berkovich and Garvan's $M_2$-rank for ordinary partitions whose odd parts cannot repeat \cite{Berk}. Given an overpartition $\lambda = (\nklist{\ell}{k})$, the \emph{$M_2$-rank} of $\lambda$ is defined to be
$$
	\oo{r}_{M_2}(\lambda) :=
	\left\lceil \frac{ \ell(\lambda)}{2} \right\rceil - \#(\lambda) + \# (\lambda_o) -\chi(\lambda),
$$
where $\lambda_o$ is the subpartition of $\lambda$ consisting of all non-overlined odd parts of $\lambda$, and $\chi(\lambda)$ is defined to be
\begin{align*}
\chi(\lambda) := \begin{cases}
1, &\text{if the largest part of } \lambda  \text{ is both odd and non-overlined} \\
0, &\text{otherwise.}
\end{cases}
\end{align*}

For example, let  $\lambda = ( \overline{2},1,1)$. Then $\lambda_o = (1,1)$, and we see that $\oo{r}_{M_2}(\lambda) = 1-3+2-0 = 0$. We see the generating series for the $M_2$-ranks of overpartitions in the following theorem.

\begin{thm}[Lovejoy \cite{LovejoyM2}] \label{L2thm}
	The coefficient of $z^mq^n$ in the series
	\begin{align} \label{L2}
\overline{R[2]}(z;q):=
\ovr \left( 1 + 2 \sum_{n \geq 1} \frac{(1-z)(1-z^{-1})(-1)^nq^{n^2+2n}}{(1-zq^{2n})(1-z^{-1}q^{2n})} \right)
\end{align}
	is equal to the number of overpartitions $\lambda$ with $|\lambda| = n$ and $\oo{r}_{M_2}(\lambda) = m$.
\end{thm}

The proofs of these theorems are based on Lovejoy's first and second Frobenius representations for overpartitions \cite{LovejoyD} \cite{LovejoyM2}, which we summarize in Section \ref{prelim}.
Note the similarity in the summands in \eqref{L1} and \eqref{L2}; they are identical apart from the exponents of $q$ in the summation.

We now continue this pattern. For $k \geq 1$, define the series
\begin{align}\label{Rk}
	\overline{R[k]}(z,q) := \ovr \bigg( 1+ 2 \sum_{n=1}^\infty \frac{(1-z)(1-z^{-1})(-1)^nq^{n^2+kn}}{(1-zq^{kn})(1-z^{-1}q^{kn})} \bigg).
\end{align}
It is natural to ask is if $\Rk$ can be interpreted as the generating series of an overpartition rank.
In this paper we give a partial answer in terms of Frobenius representations.
We may think of a Frobenius representation as an array
\begin{align*}
\begin{pmatrix}
\nkand{a}{k}\\
\nkand{b}{k}
\end{pmatrix},
\end{align*}
where $\lambda = (\nklist{a}{k})$ and $\mu = (\nklist{b}{k})$ are partitions or overpartitions.
As we will see in Section \ref{prelim}, certain Frobenius representations correspond bijectively to overpartitions.

In Section \ref{sec:buff}, we introduce \emph{buffered Frobenius representations},
which are arrays of the form 
\begin{align*}
\begin{pmatrix}
	\nkand{\alpha}{k}\\
	\nkand{\beta}{k}
\end{pmatrix},
\end{align*}
where each of the entries $\alpha_i$ and $\beta_i$ are partitions or overpartitions.
A buffered Frobenius representation can be interpreted as an exploded Young tableau for an ordinary Frobenius representation $(\lambda, \mu)^T$.
Thus, every overpartition admits multiple buffered Frobenius representations.

We now present our first main result, which interprets $\Rk$ in terms of buffered Frobenius representations.

\begin{thm} \label{thm1}
	Let $\zeta_k$ be a primitive $k$th root of unity.
	The coefficient of $z^{\tfrac{m}{k}} q^n$ in $\Rk $ is equal to the weighted count of buffered Frobenius representations of the first kind $\nu$ with at most $k$ columns, $|\nu|=n$, and full rank $m$, where the count is weighted by 
	\begin{align*}
		(-1)^{h(\nu)} \prod_{i=1}^k\zeta_k^{ (i-1)\rho_1^i(\nu)}.
	\end{align*}
	In particular, the count vanishes for buffered Frobenius representations whose full rank is not a multiple of $k$.
\end{thm}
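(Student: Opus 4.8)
The plan is to establish the theorem by computing the two-variable weighted generating function of buffered Frobenius representations of the first kind and matching it, coefficient by coefficient, against the explicit series $\Rk$; this generalizes Lovejoy's treatment of the cases $k=1$ and $k=2$. Writing
\[
F(z,q) := \sum_{\nu} (-1)^{h(\nu)} \Bigl(\prod_{i=1}^k \zeta_k^{(i-1)\rho_1^i(\nu)}\Bigr)\, z^{\,\mathrm{fr}(\nu)/k}\, q^{|\nu|},
\]
where the sum runs over all buffered representations of the first kind with at most $k$ columns and $\mathrm{fr}(\nu)$ denotes the full rank, the theorem is equivalent to the single identity $F(z,q) = \Rk$. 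Indeed, once this is known, $[z^{m/k}q^n]\,\Rk$ computes the asserted weighted count of representations with full rank $m$ and weight $n$; and since $\Rk$ is a Laurent series in \emph{integer} powers of $z$, this coefficient---hence the weighted count---vanishes whenever $k \nmid m$, which is the final assertion.

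First I would record the combinatorial side in factored form. Reading a buffered representation as an exploded ordinary Frobenius symbol, I would split its data into the buffer (overpartition) parts and the exploded core. The buffer parts are unconstrained and should contribute exactly the overpartition generating function $\ovr$, accounting for the prefactor in \eqref{Rk}, while the core produces the summation. Here I would verify that $|\nu|$, the sign statistic $h(\nu)$, and the column statistics $\rho_1^i(\nu)$ each decompose over the columns, so that the weight $(-1)^{h(\nu)}\prod_i \zeta_k^{(i-1)\rho_1^i(\nu)}$ factors columnwise and the generating function splits as a product.

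Next I would expand the analytic summand to expose the same structure. Using
\[
\frac{1-z}{1-zq^{kn}} = 1 - (1-q^{kn})\sum_{j\geq 1} q^{kn(j-1)} z^{j}
\]
together with the mirror expansion in $z^{-1}$, the product in the summand becomes a sum of monomials $z^{j-j'}q^{n^2+kn(j+j'-1)}$, plus boundary terms, each carrying the sign $(-1)^n$. The factor $q^{n^2}$ supplies the staircase of an $n$-column Frobenius symbol, while the blocks $q^{kn}$ distribute weight in steps of $k$; I would match each monomial to a buffered representation, reading its full rank from the net power of $z$ as $\mathrm{fr}(\nu)=k(j-j')$. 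The mechanism I expect to drive the identity is that the phases $\zeta_k^{(i-1)\rho_1^i(\nu)}$ act as a discrete Fourier filter on the columns: summing the weight over the $k$ admissible placements of an exploded unit carrying column-rank $r$ produces $\sum_{i=1}^k \zeta_k^{(i-1)r} = k\,[\,k\mid r\,]$, which annihilates contributions with $r$ not divisible by $k$ and thereby removes every fractional power of $z$ from $F$, leaving a Laurent series in integer powers identifiable with $\Rk$.

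The main obstacle will be the bookkeeping of the explosion map: verifying that collapsing a buffered representation to its core Frobenius symbol is simultaneously weight- and full-rank-preserving (up to the factor $k$), and that the fibers of this collapse are enumerated precisely by the geometric-series expansion above. The delicate point, I expect, is confirming that $h(\nu)$ and the $\rho_1^i(\nu)$ are defined so that no spurious cross terms survive the $\zeta_k$-summation---equivalently, that the only representations contributing to a fixed integer power $z^{m/k}$ are exactly those of full rank $m$. Once this compatibility is verified, comparing the coefficients of $z^{m/k}q^n$ on both sides of $F(z,q)=\Rk$ completes the proof.
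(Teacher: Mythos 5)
Your proposal is missing the central analytic step, and the mechanism you describe for the root-of-unity weights is not the one that actually makes the theorem work. The paper does not obtain the combinatorial interpretation by expanding $(1-z)/(1-zq^{kn})$ as a geometric series; such an expansion yields a Lambert-type series for the rank counts but gives no access to the nested column structure of $B_1$-representations (the bracket conditions $\#(\alpha_{2})\le\sigma(\alpha_1)$, $\os(\beta_1)$, the staircases, the Joichi--Stanton overlinings, etc.). Instead, one must first refine $\Rk$ to the $k$-variable series $\Frobk$, with one variable $x_i$ per column, and apply Andrews' $k$-fold generalization of the Watson--Whipple transformation (Corollary \ref{swerdnA}, after $N\to\infty$, $a=1$, $b_i=x_i$, $c_i=x_i^{-1}$, $b_{k+1}=-1$, $c_{k+1}\to\infty$). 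It is the resulting multi-sum over $n_1,\dots,n_k$, with factors $(1-x_{k-i+1})(1-x_{k-i+1}^{-1})q^{N_i}/(x_{k-i+1}q^{N_{i-1}},x_{k-i+1}^{-1}q^{N_{i-1}})_{n_i+1}$ and $(-1;q)_{N_k}q^{(N_k^2-N_k)/2}$, that is interpreted column by column (Lemmas \ref{initrun} and \ref{overrun}, Theorem \ref{1summand}). Your plan supplies no substitute for this transformation, and the claim that the ``buffer parts'' contribute the prefactor $\ovr$ does not match the structure: that prefactor is consumed in the transformation, and the overlinings come from $(-1;q)_{N_k}$ via the Joichi--Stanton algorithm, not from a detached overpartition factor.

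Second, the ``discrete Fourier filter'' $\sum_{i=1}^k\zeta_k^{(i-1)r}=k\,[\,k\mid r\,]$ does not occur in the argument. Each representation $\nu$ contributes a single term $(-1)^{h(\nu)}\prod_i\zeta_k^{(i-1)\rho_1^i(\nu)}z^{\rho_1(\nu)/k}q^{|\nu|}$; there is no averaging over $k$ placements of a unit. The weights and the fractional powers of $z$ arise simultaneously from the specialization $x_i=\zeta_k^{i-1}\sqrt[k]{z}$ in Theorem \ref{1summand}, under which $x_i^{\rho_1^i(\nu)}=\zeta_k^{(i-1)\rho_1^i(\nu)}z^{\rho_1^i(\nu)/k}$ and the left-hand side collapses to $\Rk$ because $\prod_{i=1}^{k}(1-\zeta_k^{i-1}wq^{n})=1-w^kq^{kn}$. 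The vanishing for $k\nmid m$ then follows not from any cancellation identity but from the observation that one side is a series in integer powers of $z$ with integer coefficients while the other is a priori a series in $\sqrt[k]{z}$ with coefficients in $\ZZ[\zeta_k]$. To repair the proposal you would need to reinstate the multivariable refinement and the $q$-hypergeometric transformation; without them the identity $F(z,q)=\Rk$ cannot be established by the route you describe.
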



Following Lovejoy's work on the $M_2$-rank and the second Frobenius representation of an overpartition \cite{LovejoyM2},  our second main result interprets $\RkII$ in terms of a second family of buffered Frobenius representations.

\begin{thm} \label{thm2}
	Let $\zeta_k$ be a primitive $k$th root of unity.
	The coefficient of $z^{\tfrac{m}{k}} q^n$ in $\RkII$ is equal to the weighted count of buffered Frobenius representations of the second kind $\nu$ with at most $k$ columns, $|\nu|=n$, and full rank $m$, where the count is weighted by 
	\begin{align*}
		(-1)^{h(\nu)} \prod_{i=1}^k\zeta_k^{(i-1)\rho_2^i(\nu)}.
	\end{align*}
	In particular, the count vanishes for buffered Frobenius representations whose full rank is not a multiple of $k$.
\end{thm}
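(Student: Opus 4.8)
The plan is to mirror the proof of Theorem \ref{thm1}, replacing the first Frobenius representation with the second throughout. The essential observation is that $\RkII$ is obtained from $\Rk$ by the substitution $k \mapsto 2k$ in the defining series \eqref{Rk}, and that this doubling is precisely the feature distinguishing Lovejoy's second Frobenius representation (governing the $M_2$-rank) from his first (governing the Dyson rank). Indeed, the passage from \eqref{L1} to \eqref{L2} replaces the exponent $n^2 + n$ by $n^2 + 2n$ and $q^n$ by $q^{2n}$; at the $k$-fold level this becomes the replacement of $n^2 + kn$ by $n^2 + 2kn$ and $q^{kn}$ by $q^{2kn}$, which is exactly what separates $\Rk$ from $\RkII$.

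First I would expand the summand of $\RkII$ as a double series, using the geometric expansions of $(1 - zq^{2kn})^{-1}$ and $(1 - z^{-1}q^{2kn})^{-1}$ together with the overpartition generating factor $\ovr$. This produces a sum whose combinatorial content is read off as a buffered Frobenius representation of the second kind with at most $k$ columns: the parity shift inherent in the $2k$-exponents encodes the odd-part conditions that define the second kind, just as the $M_2$-rank singles out non-overlined odd parts. The power of $q$ tracks the weight $|\nu|$, the net power of $z$ tracks the full rank, and the factor $(-1)^n$ in the summand supplies the sign $(-1)^{h(\nu)}$.

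Next I would apply the root-of-unity weighting. The product $\prod_{i=1}^k \zeta_k^{(i-1)\rho_2^i(\nu)}$ assigns to each column a character value depending on its position, and summing over all buffered representations that explode a fixed ordinary second Frobenius representation collapses the $k$ columns via the orthogonality relations for $\zeta_k$. This forces the exponent of $z$ to be divisible by $k$, yielding the coefficient of $z^{m/k}$ and the stated vanishing when $k \nmid m$. The bookkeeping here is identical in form to the first-kind case of Theorem \ref{thm1}, with $\rho_1^i$ replaced by $\rho_2^i$ at every occurrence.

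The main obstacle is verifying that the combinatorial decomposition of the $2k$-shifted summand lands exactly on buffered Frobenius representations of the second kind, with each of the four statistics---weight, full rank, the sign $(-1)^{h(\nu)}$, and the character weight---matched term by term. The difficulty is concentrated in the parity constraints: the factor of $2$ in the exponents $n^2 + 2kn$ and $q^{2kn}$ must be shown to correspond precisely to the non-overlined odd-part structure built into the second kind, so that the resulting objects are neither over- nor under-counted. Once this correspondence is pinned down, the root-of-unity extraction proceeds verbatim as in the proof of Theorem \ref{thm1}.
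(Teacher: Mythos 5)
Your proposal is missing the central analytic step and therefore does not give a workable route to the theorem. The paper does not (and cannot) read the combinatorics of $B_2$-representations directly off a geometric expansion of the summand of $\RkII$: that series is an alternating Lambert-type sum carrying the factor $(-1)^n q^{n^2+2kn}$, and no term-by-term expansion of $(1-zq^{2kn})^{-1}(1-z^{-1}q^{2kn})^{-1}$ together with $\ovr$ yields a positive combinatorial object, let alone one with $k$ columns. What is actually required is, first, the multivariate refinement $\FrobkII$, with an independent variable $x_i$ for each column so that the individual ranks $\rho_2^i$ can be tracked separately (the single variable $z$ cannot distinguish them); and second, Andrews' $k$-fold Watson--Whipple transformation (Corollary \ref{swerdnA}) applied with $q \mapsto q^2$, $a=1$, $b_i = x_i$, $c_i = x_i^{-1}$, $b_{k+1}=-1$, $c_{k+1}=-q$, and $N \to \infty$, which converts the alternating series into the $k$-fold sum of Theorem \ref{2ndhype}. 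Only that transformed multi-sum admits a direct combinatorial interpretation, and establishing it (Theorem \ref{2term}) requires the second-bracket and second-rank lemmas (Lemmas \ref{2frob1lemma} and \ref{2frob2lemma}) together with the Joichi--Stanton-type Algorithm \ref{alg:JS2}; your appeal to ``the parity shift inherent in the $2k$-exponents'' supplies none of this. Note also that the second kind is \emph{not} obtained from the first by the substitution $k \mapsto 2k$: although $\overline{R[2k]}(z,q)$ coincides with the $2k$-fold first series as a $q$-series, Theorem \ref{thm2} concerns $B_2$-representations with at most $k$ columns, which arise from a genuinely different specialization of the transformation, not from relabelling the first-kind objects.

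Your account of the root-of-unity weighting is also not how the argument runs. There is no summation over the fiber of the jigsaw map above a fixed second Frobenius representation and no use of orthogonality relations. One simply substitutes $x_i = \zeta_k^{i-1}\sqrt[k]{z}$ into the multivariate generating series of Theorem \ref{2term}: a monomial $x_1^{m_1}\cdots x_k^{m_k}$ becomes $\zeta_k^{\sum(i-1)m_i}\, z^{(m_1+\cdots+m_k)/k}$, which produces the stated weight and the exponent $\rho_2(\nu)/k$ directly. The vanishing for $k \nmid \rho_2(\nu)$ then follows because the left-hand side $\RkII$ is a series in integer powers of $z$ with integer coefficients, while the right-hand side is a priori a series in $z^{1/k}$ with coefficients in $\ZZ[\zeta_k]$ --- not from any collapsing of columns.
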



Each of these families is equipped with $k$ rank functions, $\rho_1^i(\nu)$ and $\rho_2^i(\nu)$, respectively, and $k$ rank-reversing conjugation maps, which are developed in Sections \ref{first} and \ref{second}.
The observant reader will note that $\Rk$ and $\RkII$ are generating series for the ranks of buffered Frobenius representations, rather than for the ranks of overpartitions.
We discuss this gap and the potential for improvement in Section \ref{end}.

The organization of this paper is as follows.
In Section \ref{prelim}, we outline our $q$-series techniques and summarize the motivating results for the Dyson rank and $M_2$-rank.
In Section \ref{sec:buff}, we define a generic buffered Frobenius representation and give a combinatorial map from buffered Frobenius representations to generalized Frobenius representations. 
This allows us to construct our first family of buffered Frobenius representations and prove Theorem \ref{thm1} in Section \ref{first}.
Then, in Section \ref{second}, we construct our second family of buffered Frobenius representations and prove Theorem \ref{thm2}.
Finally, we give our closing remarks in Section \ref{end}.

\section{Preliminaries} \label{prelim}

\subsection{The $q$-Pochhammer Symbol and $q$-Hypergeometric Series.}

We begin with  the definition of the \emph{$q$-Pochhammer symbol} and its conventional shorthand notations. For $a \in \CC$, define
\begin{align}
\label{Pochhammer}
\aqprod{a}{q}{n} &:= \prod_{i=0}^{n-1}(1-aq^i)\\
\aqprod{a}{q}{\infty} &:= \prod_{i=0}^{\infty}(1-aq^i)\\
\aqprod{\nklist{a}{k}}{q}{n} &:= \aqprod{a_1}{q}{n} \aqprod{a_2}{q}{n} \cdots \aqprod{a_k}{q}{n} \\
\aqprod{\nklist{a}{k}}{q}{\infty} &:= \aqprod{a_1}{q}{\infty} \aqprod{a_2}{q}{\infty} \cdots \aqprod{a_k}{q}{\infty}.
\end{align}
Manipulating $q$-Pochhammer symbols typically entails expanding  the product  and canceling individual factors, as seen in the following lemma.

\begin{lem} \label{reduce}
For all nonnegative integers $m$ and $n$,
\begin{align*}
	\frac{\aqprod{a}{q}{m}}{\aqprod{aq}{q}{m+n}} =
	 \frac{(1-a)}{\aqprod{aq^m}{q}{n+1}}
\end{align*}
\end{lem}

\begin{proof}
The case $m = 0$,
\begin{align*}
	\frac{1}{\aqprod{aq}{q}{n}} =
	 \frac{(1-a)}{\aqprod{a}{q}{n+1}},
\end{align*}
 is trivial.

Next, consider $m > 0$.
By expanding the $q$-Pochhammer symbol and canceling like terms, we have
\begin{multline*}
\frac{\aqprod{a}{q}{m}}{\aqprod{aq}{q}{m+n}} = \frac{(1-a)\cdots(1-aq^{m-1})}{(1-aq)\cdots (1-aq^{m-1})(1-aq^m) \cdots (1-aq^{m+n})}\\
 =\frac{(1-a)}{(1-aq^m) \cdots (1-aq^{m+n})}= \frac{(1-a)}{\aqprod{aq^m}{q}{n+1}}.
\end{multline*}
\end{proof}

The $q$-Pochhammer symbol  is necessary for the definition of the $q$-hypergeometric series,
\begin{align}
\label{hype}
\pPq{r}{r-1}{a_1, a_2, a_3, \dots, a_r}{ b_1, b_2, \dots, b_{r-1}}{q; \ z}
:= \sum_{n \geq 0} \frac{\aqprod{\nklist{a}{r}}{q}{n}z^n}{\aqprod{\nklist{b}{r-1},q}{q}{n}}.
\end{align}
These series admit many beautiful transformation formulas; see Gasper and Rahman \cite{GasperRakhman} for  examples. In this paper, we only require Andrews' $k$-fold generalization of the Watson-Whipple transformation.

\begin{thm}[Andrews \cite{AndrewsWW}] \label{Andrews}  Let $a, \bcklist{b}{c}{k}$ be complex numbers, and let $k \geq 1$ and $N\geq 0$.
	Then,
	\begin{multline} \label{eq:Andrews}
		\pPq{2k+4}{2k+3}{a , a^{\tfrac{1}{2}} q  , -a^{\tfrac{1}{2}} q , \bcklist{b}{c}{k}, q^{-N}}{  a^{\tfrac{1}{2}} , -a^{\tfrac{1}{2}} , \abcklist{aq}{b}{c}{k} , aq^{N+1}}{q; \ \frac{a^kq^{k+N}}{b_1c_1\cdots b_k c_k} } \\
		=
		\frac{\aqprod{aq,\tfrac{aq}{b_k c_k}}{q}{N}}{\aqprod{\tfrac{aq}{b_k},\tfrac{aq}{c_k}}{q}{N}}
		\sum_{{
			n_1,
			\dots ,
			n_{k-1} \geq 0
		}}
		\frac{\aqprod{\tfrac{aq}{b_1c_1}}{q}{n_1}}{\aqprod{q}{q}{n_1}}
		\cdots \frac{\aqprod{\tfrac{aq}{b_{k-1}c_{k-1}}}{q}{n_{k-1}}}{\aqprod{q}{q}{n_{k-1}}}\\
		\times
		\frac{\aqprod{b_2,c_2}{q}{N_1}}{\aqprod{\tfrac{aq}{b_1},\tfrac{aq}{c_1}}{q}{N_1}}
		\frac{\aqprod{b_3,c_3}{q}{N_2}}{\aqprod{\tfrac{aq}{b_2},\tfrac{aq}{c_2}}{q}{N_2}}
		\cdots \frac{\aqprod{b_k,c_k}{q}{N_{k-1}}}{\aqprod{\tfrac{aq}{b_{k-1}},\tfrac{aq}{c_{k-1}}}{q}{{N_{k-1}}}}\\
		\times
		\frac{\aqprod{q^{-N}}{q}{N_{k-1}}}{\aqprod{\tfrac{b_k c_k q^{-N}}{a}}{q}{N_{k-1}}}
		\frac{(aq)^{\nkplus{N}{k-2}}q^{N_{k-1}}}{(b_2 c_2)^{N_1}(b_{3}c_{3})^{N_2}\cdots (b_{k-1}c_{k-1})^{N_{k-2}}},
\end{multline}
where we write $N_0 = 0$ and  $N_i = \nkplus{n}{i}$ for all $i \geq 1$.
\end{thm}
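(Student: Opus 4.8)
The plan is to argue by induction on $k$, using as the engine the classical very-well-poised transformations that are catalogued in Gasper and Rahman \cite{GasperRakhman}; equivalently, one may phrase the whole argument as a $(k-1)$-fold iteration of Bailey's lemma closed off by a single summation. The left-hand side of \eqref{eq:Andrews} is a terminating very-well-poised series: the quartet $a^{1/2}q, -a^{1/2}q$ over $a^{1/2}, -a^{1/2}$ contributes the factor $(1-aq^{2n})/(1-a)$ that is the hallmark of the Bailey--Watson machinery, while the entry $q^{-N}$ forces termination. This is exactly the shape to which Watson's transformation and the very-well-poised ${}_6\phi_5$ summation apply, and \eqref{eq:Andrews} records the result of feeding the $k$ parameter pairs $(b_i,c_i)$ through that machinery one pair at a time.

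For the base case $k=1$ the inner multiple sum over $n_1,\dots,n_{k-1}$ is empty and contributes $1$; moreover $N_0=0$ collapses every Pochhammer symbol of index $N_{k-1}$ to $1$, so the right-hand side reduces to the single prefactor $\aqprod{aq, aq/(b_1 c_1)}{q}{N}/\aqprod{aq/b_1, aq/c_1}{q}{N}$. On the left we are left with a terminating very-well-poised ${}_6\phi_5$ in the parameters $b_1, c_1, q^{-N}$ with argument $aq^{1+N}/(b_1 c_1)$, and the terminating very-well-poised ${}_6\phi_5$ summation evaluates it to precisely this prefactor. This establishes the base case.

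For the inductive step I would assume \eqref{eq:Andrews} for $k-1$ and peel a single parameter pair off the $k$-fold left-hand side. Applying one step of Bailey's lemma (equivalently, one application of Watson's transformation) to the pair $(b_1,c_1)$ introduces the summation index $n_1$, produces the factor $\aqprod{aq/(b_1 c_1)}{q}{n_1}/\aqprod{q}{q}{n_1}$, and replaces the series by a very-well-poised series of order two lower whose parameters are shifted by $n_1$. The residual series is then an instance of the $(k-1)$-fold identity, so the inductive hypothesis supplies the remaining $(k-2)$-fold sum together with the closing ${}_6\phi_5$ evaluation in $b_k, c_k, q^{-N}$. Throughout, Lemma \ref{reduce} is the tool for collapsing the ratios of $q$-Pochhammer symbols that appear each time a summation variable is shifted.

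The main obstacle is not conceptual but the bookkeeping of how parameters and prefactors accumulate across the $k-1$ iterations. One must verify that the partial sums $N_i=n_1+\cdots+n_i$ enter the Pochhammer symbols exactly as $\aqprod{b_{i+1}, c_{i+1}}{q}{N_i}/\aqprod{aq/b_i, aq/c_i}{q}{N_i}$, that the terminal pair couples to $q^{-N}$ through $\aqprod{q^{-N}}{q}{N_{k-1}}/\aqprod{b_k c_k q^{-N}/a}{q}{N_{k-1}}$, and that the scattered powers of $q$ and $aq$ collect into the monomial $(aq)^{N_1+\cdots+N_{k-2}}q^{N_{k-1}}$ divided by $(b_2 c_2)^{N_1}(b_3 c_3)^{N_2}\cdots(b_{k-1}c_{k-1})^{N_{k-2}}$. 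Matching these factors against the output of each Bailey step---keeping the successive shifts of the very-well-poised base $a$ consistent---is where the computation must be carried out with care; once the indices are tracked correctly the telescoping is automatic and the identity follows.
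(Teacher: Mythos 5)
This theorem is imported: the paper gives no proof of Theorem \ref{Andrews}, citing it directly from Andrews \cite{AndrewsWW}, so there is no internal argument to compare yours against. Your outline is, however, essentially the standard proof of this result. The base case is verified correctly: for $k=1$ the multiple sum is empty, every symbol indexed by $N_{k-1}=N_0=0$ is trivial, and the identity collapses to the terminating very-well-poised $_6\phi_5$ summation, which evaluates the left side to $\aqprod{aq,\tfrac{aq}{b_1c_1}}{q}{N}/\aqprod{\tfrac{aq}{b_1},\tfrac{aq}{c_1}}{q}{N}$ exactly as required. The inductive step you describe --- peeling off one parameter pair per application of Bailey's lemma, with the pair $(b_k,c_k)$ reserved for the closing $_6\phi_5$ evaluation that couples to $q^{-N}$ and produces the prefactor --- is the Bailey-chain mechanism by which the identity is normally derived, and the factors $\aqprod{\tfrac{aq}{b_ic_i}}{q}{n_i}/\aqprod{q}{q}{n_i}$ and $\aqprod{b_{i+1},c_{i+1}}{q}{N_i}/\aqprod{\tfrac{aq}{b_i},\tfrac{aq}{c_i}}{q}{N_i}$ do arise exactly where you predict.

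The one substantive caveat is that your proposal explicitly defers the entire inductive computation (``the bookkeeping of how parameters and prefactors accumulate''), and for this theorem that bookkeeping \emph{is} the content: verifying that the residual series after one Bailey step is literally an instance of the $(k-1)$-fold identity, and that the monomial $(aq)^{\nkplus{N}{k-2}}q^{N_{k-1}}/\bigl((b_2c_2)^{N_1}\cdots(b_{k-1}c_{k-1})^{N_{k-2}}\bigr)$ assembles correctly, is not automatic from the outline. A minor imprecision: in the Bailey-chain formulation the base $a$ is \emph{not} shifted between iterations --- that invariance is precisely what makes the chain iterate --- so the worry about ``successive shifts of the very-well-poised base $a$'' suggests a conflation with the Watson-transformation phrasing, where intermediate series do carry shifted arguments. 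As a proof strategy the proposal is sound and matches Andrews' own derivation; as written it is an outline rather than a complete proof, which is acceptable here only because the paper itself treats the result as a citation.
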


Observe that the left hand side of \eqref{eq:Andrews} is a symmetric function in the variables $\bcklist{b}{c}{k}$. Thus, we may permute the indices of $b_i$ and $c_i$ on the right hand side while leaving the corresponding indices fixed on the left hand side. We map
\begin{align*}
	\begin{matrix}
		1 \mapsto (k-1), & 2 \mapsto (k-2), & \dots, & (k-1) \mapsto 1, & k \mapsto k
	\end{matrix},
\end{align*}
which gives the following corollary to Theorem \ref{Andrews}.
\begin{cor} 
 \label{swerdnA}
	Let $a, \bcklist{b}{c}{k}$ be complex numbers, and let $k \geq 1$ and $N\geq 0$.
	Then,
	\begin{multline*}
		\pPq{2k+4}{2k+3}{a, a^{\tfrac{1}{2}} q, -a^{\tfrac{1}{2}} q , \bcklist{b}{c}{k} , q^{-N}}{ , a^{\tfrac{1}{2}} , -a^{\tfrac{1}{2}} , \abcklist{aq}{b}{c}{k} , aq^{N+1}}{q; \ \frac{a^kq^{k+N}}{b_1c_1\cdots b_k c_k} }\\
		=
		\frac{\aqprod{aq,\tfrac{aq}{b_k c_k}}{q}{N}}{\aqprod{\tfrac{aq}{b_k},\tfrac{aq}{c_k}}{q}{N}}
		\sum_{{
			n_1 ,
			\dots ,
			n_{k-1} \geq 0
		}}
		\frac{\aqprod{\tfrac{aq}{b_{k-1}c_{k-1}}}{q}{n_1}}{\aqprod{q}{q}{n_1}}
		\cdots
		\frac{\aqprod{\tfrac{aq}{b_{1}c_{1}}}{q}{n_{k-1}}}{\aqprod{q}{q}{n_{k-1}}}\\
		\times
		\frac{\aqprod{b_{k-2},c_{k-2}}{q}{N_1}}{\aqprod{\tfrac{aq}{b_{k-1}},\tfrac{aq}{c_{k-1}}}{q}{N_1}}
		\frac{\aqprod{b_{k-3},c_{k-3}}{q}{N_{2}}}{\aqprod{\tfrac{aq}{b_{k-2}},\tfrac{aq}{c_{k-2}}}{q}{N_{2}}} 
		\cdots  \frac{\aqprod{b_{1},c_{1}}{q}{N_{k-2}}}{\aqprod{\tfrac{aq}{b_{2}},\tfrac{aq}{c_{2}}}{q}{N_{k-2}}}
		\frac{\aqprod{b_k,c_k}{q}{N_{k-1}}}{\aqprod{\tfrac{aq}{b_{1}},\tfrac{aq}{c_{1}}}{q}{N_{k-1}}}\\
		\times
		 \frac{\aqprod{q^{-N}}{q}{N_{k-1}}}{\aqprod{\tfrac{b_k c_k q^{-N}}{a}}{q}{N_{k-1}}}
		\frac{(aq)^{\nkplus{N}{k-2}}q^{N_{k-1}}}{(b_{k-2}c_{k-2})^{N_1} (b_{k-3}c_{k-3})^{N_2}\cdots (b_{1}c_{1})^{N_{k-2}}},
	\end{multline*}
	where we write $N_0 = 0$ and  $N_i = \nkplus{n}{i}$ for all $i \geq 1$.
\end{cor}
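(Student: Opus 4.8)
The plan is to obtain Corollary \ref{swerdnA} as a pure relabeling of Theorem \ref{Andrews}, making precise the symmetry argument sketched in the remark preceding the statement. First I would verify that the left-hand side of \eqref{eq:Andrews} is invariant under any permutation $\sigma$ of $\{1, \dots, k\}$ acting simultaneously on the pairs, $(b_i, c_i) \mapsto (b_{\sigma(i)}, c_{\sigma(i)})$. This follows from the definition \eqref{hype}: the series ${}_{r}\Phi_{r-1}$ is a symmetric function of its numerator parameters and, separately, a symmetric function of its denominator parameters, since each set enters only through products of shifted factorials $\aqprod{a_i}{q}{n}$. On the left-hand side of \eqref{eq:Andrews}, the variables $b_i, c_i$ all occupy numerator slots, the variables $aq/b_i, aq/c_i$ all occupy denominator slots, and the argument $a^k q^{k+N}/(b_1 c_1 \cdots b_k c_k)$ depends on them only through the symmetric product $b_1 c_1 \cdots b_k c_k$; hence $\sigma$ fixes the entire left-hand side.

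Next I would specialize to the reversal $\sigma(i) = k - i$ for $1 \le i \le k-1$ and $\sigma(k) = k$. Since \eqref{eq:Andrews} holds identically in the free parameters, the substitution $b_i \mapsto b_{\sigma(i)}$, $c_i \mapsto c_{\sigma(i)}$ applied in \emph{every} factor yields a second true identity whose left-hand side, by the symmetry just established, coincides with the original left-hand side. Thus the substituted right-hand side equals the unpermuted left-hand side, which is exactly the assertion of the corollary. The essential point to flag here is that the summation variables $\nklist{n}{k-1}$ and their partial sums $N_i = \nkplus{n}{i}$ are dummy and are \emph{not} permuted; only the parameters $b_i, c_i$ are relabeled, so the summation range and every exponent $N_i$ are carried along unchanged.

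The remaining work is bookkeeping, carried out factor by factor. The leading quotient $\aqprod{aq, aq/(b_k c_k)}{q}{N}/\aqprod{aq/b_k, aq/c_k}{q}{N}$ and the quotient built from $q^{-N}$ and $b_k c_k q^{-N}/a$ involve only $b_k$ and $c_k$, which $\sigma$ fixes, so both are unchanged. In the first product over the summation indices, the factor carrying $n_i$ has its parameter sent from $aq/(b_i c_i)$ to $aq/(b_{k-i} c_{k-i})$. In the second product, the $j$-th factor for $1 \le j \le k-2$ has numerator pair $(b_{j+1}, c_{j+1})$ sent to $(b_{k-j-1}, c_{k-j-1})$ and denominator pair $(aq/b_j, aq/c_j)$ sent to $(aq/b_{k-j}, aq/c_{k-j})$, while the final factor retains $b_k, c_k$ in the numerator and acquires $aq/b_1, aq/c_1$ in the denominator. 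Finally, in the closing monomial the base $b_{i+1} c_{i+1}$ of the $N_i$-th power is sent to $b_{k-i-1} c_{k-i-1}$, while the powers of $aq$ and $q^{N_{k-1}}$ stay fixed. Assembling these substituted factors reproduces the right-hand side exactly as stated.

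I expect the only genuine difficulty to be this index bookkeeping: one must apply $\sigma$ consistently across all lines of the right-hand side and check the boundary cases, namely the factors involving the fixed pair $b_k, c_k$ together with the first and last factors of each product, to confirm that every subscript lands where the statement claims. There is no new analytic input beyond Theorem \ref{Andrews}; once the symmetry of the left-hand side is recorded, the corollary is purely a change of labels.
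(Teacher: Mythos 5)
Your proposal is correct and follows essentially the same route as the paper: the author likewise observes that the left-hand side of Theorem \ref{Andrews} is symmetric in the pairs $(b_i, c_i)$ and obtains the corollary by applying the reversal $i \mapsto k-i$ for $1 \le i \le k-1$ with $k$ fixed, relabeling only the right-hand side. Your factor-by-factor bookkeeping matches the stated identity, so nothing further is needed.
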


We now summarize Lovejoy's work on the Dyson rank and $M_2$-rank.

\subsection{Summary of Lovejoy's Work}

 In this context, it is convenient to allow partitions and overpartitions to contain $0$ as a part, such as $\lambda = (3,3,0,0,0)$.
We call these \emph{partitions into nonnegative parts} and \emph{overpartitions into nonnegative parts}, respectively\footnote{When unspecified, the terms partition and overpartition should be taken to mean  partitions and overpartitions into positive parts.}.
The reader may consider this approach as a way for shorter partitions and overpartitions to attain a longer length requirement.
For example, we can admit $(3,3)$ in contexts where  a partition with exactly five parts is required.
This is a common technique when working with generalized Frobenius representations, which we now define.

\begin{defn}[Andrews \cite{AndrewsF}]
Let $\mathcal{A}$ and $\mathcal{B}$ be sets of partitions or overpartitions, possibly into nonnegative parts. A generalized Frobenius representation is a two rowed array 
\begin{align*}
\nu = 
\begin{pmatrix}
a_1& a_2& \dots& a_k\\
b_1& b_2& \dots& b_k
\end{pmatrix}
\end{align*}
\\
where $(a_1, a_2, \dots, a_k) \in \mathcal{A}$, and $(b_1, b_2, \dots, b_k) \in \mathcal{B}$.
\end{defn}
We define the \emph{weight} of a generalized Frobenius representation to be the sum of its entries\footnote{
	Note that Lovejoy uses Andrews' convention $|\nu| = k + \sum (a_i + b_i)$ in his earlier work \cite{LovejoyD}.
	Statements of these results have been adjusted for consistency.
},
\begin{align*}
	|\nu| := \sum_{i=1}^k ( a_i + b_i ).
\end{align*}
For example, 
\begin{align*}
\begin{pmatrix}
6&5&5&2\\
6&4&\overline{0}&0
\end{pmatrix}
\end{align*}
is a generalized Frobenius representation with weight 28. The top row is an ordinary partition, and the bottom row is an overpartition into nonnegative parts.
With the correct choice of sets $\mathcal{A}$ and $\mathcal{B}$, the corresponding Frobenius representations are equivalent to overpartitions, as seen in the following theorem. 

\begin{thm}[Corteel, Lovejoy \cite{Opartns}] \label{Dbi}
There is a bijection between overpartitions $\lambda$ and generalized Frobenius representations $\nu = (\alpha, \beta)^T$ where $\alpha$ is a partition into  distinct parts and $\beta$ is an overpartition into nonnegative parts such that $|\lambda| = |\nu|$.
\end{thm}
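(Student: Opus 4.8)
The plan is to produce an explicit weight-preserving bijection by a Frobenius-style dissection of the Young tableau of $\lambda$, with all of the overline data funnelled into the bottom row. First I would fix the underlying shape: since overlining a part does not change its value, the Young tableau of $\lambda$ has the same shape as the ordinary partition obtained by erasing every overline, so it carries a well-defined Durfee square; let $k$ be its side length, which will be the number of columns of $\nu$. Reading the arm lengths $a_i = \ell_i - i$ across the $k$ rows meeting the diagonal gives a strictly decreasing sequence $a_1 > a_2 > \cdots > a_k \ge 0$, i.e. a partition $\alpha$ into distinct (nonnegative) parts; reading the leg lengths down the corresponding $k$ columns gives the raw bottom data. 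The $k$ diagonal cells account for the discrepancy between the classical Frobenius weight $k + \sum(a_i+b_i)$ and the convention $|\nu| = \sum(a_i+b_i)$ adopted here (and flagged in the footnote), so I would absorb the diagonal by a shift, e.g. $b_i = (\text{leg length of column } i) + 1$, after which $\sum(a_i+b_i) = |\lambda|$ is immediate from counting cells.

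The substantive step is routing the overline data into $\beta$. The overlining of $\lambda$ amounts to a choice of subset of the distinct values occurring in $\lambda$, and because the top row $\alpha$ must be an \emph{ordinary} (unoverlined) partition, all of this data must be absorbed by the bottom row; it is precisely the gap between a partition into distinct parts and an overpartition into nonnegative parts that provides the room. Concretely I would invoke the standard correspondence between overpartitions into at most $k$ parts and pairs consisting of a partition into distinct parts together with an ordinary partition (subtract the staircase $k-i$, and transfer each marked value to the first occurrence of its size), applied so that the strictly decreasing leg lengths, augmented by the overline subset, reassemble into a weakly decreasing sequence $\beta = (b_1,\dots,b_k)$ with overlines attached only at first occurrences, i.e. a genuine overpartition into nonnegative parts. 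Verifying that the image lands in \emph{exactly} this class — top row genuinely distinct, bottom row overlined only on first occurrences and nowhere else — is where care is required.

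To prove bijectivity I would exhibit the inverse explicitly: given $(\alpha,\beta)^T$ of the prescribed type, reconstruct the $k$ diagonal cells, attach $\alpha$ as arms to recover the rows meeting the diagonal, reinsert the staircase into $\beta$ to recover the strictly decreasing leg lengths together with the marked columns, erect those columns, and finally translate each mark back into an overline on the first occurrence of the relevant part of the assembled overpartition $\lambda$. One then checks that the forward and backward constructions are mutually inverse and that $|\lambda| = |\nu|$ throughout, the latter being automatic once the diagonal shift is fixed.

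The main obstacle is the overline bookkeeping rather than the shape dissection: I must confirm that the forward map always outputs a \emph{legal} symbol and that the staircase-plus-marking correspondence is reversible with no double counting, watching the edge cases introduced by permitting $0$ as a part, by repeated non-overlined parts, and by parts whose terminating cell meets the diagonal. Checking the displayed example $(\overline 4,4,2,1)$, with conjugate $(4,3,\overline 2,2)$ from Figure \ref{Young}, against the constructed symbol would be a useful sanity test before committing to the general argument.
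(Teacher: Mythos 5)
Your construction fails at its very first step: the number of columns of $\nu$ is \emph{not} the side of the Durfee square of the underlying partition of $\lambda$, so anchoring the dissection to the Durfee square cannot yield the stated bijection no matter how carefully the overline data is routed afterwards. A minimal counterexample occurs at $n=3$. Every overpartition of $3$ has underlying partition $(3)$, $(2,1)$, or $(1,1,1)$, each with Durfee square of side $1$; yet the admissible symbols of weight $3$ split as six with one column, namely $\left(\begin{smallmatrix}3\\0\end{smallmatrix}\right)$, $\left(\begin{smallmatrix}3\\\overline{0}\end{smallmatrix}\right)$, $\left(\begin{smallmatrix}2\\1\end{smallmatrix}\right)$, $\left(\begin{smallmatrix}2\\\overline{1}\end{smallmatrix}\right)$, $\left(\begin{smallmatrix}1\\2\end{smallmatrix}\right)$, $\left(\begin{smallmatrix}1\\\overline{2}\end{smallmatrix}\right)$, and two with two columns, $\left(\begin{smallmatrix}2&1\\0&0\end{smallmatrix}\right)$ and $\left(\begin{smallmatrix}2&1\\\overline{0}&0\end{smallmatrix}\right)$; under Algorithm \ref{alg:D} the latter two correspond to $(\overline{2},\overline{1})$ and $(2,\overline{1})$, whose common underlying partition $(2,1)$ has Durfee square of side $1$, not $2$. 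The same obstruction appears in the weight bookkeeping of your second paragraph: passing from the $k$ strictly decreasing leg lengths to a weakly decreasing $k$-tuple frees a staircase of total weight $\tfrac{k(k-1)}{2}$, whereas overlining parts of $\lambda$ costs nothing, and a Joichi--Stanton style transfer (Algorithm \ref{alg:JS}) converts a marked subset of $\{0,1,\dots,k-1\}$ into added weight equal to the \emph{sum} of that subset. The books balance only when the marked subset is the entire staircase, which defeats the purpose of encoding an arbitrary overline pattern. Put differently, symbols with exactly $k$ columns are generated by $q^{k(k+1)/2}(-1;q)_k/(q;q)_k^2$, and this does not refine the overpartition generating function according to the Durfee square of the underlying partition.

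For comparison, the paper does not reprove this theorem; it cites Lovejoy and records his explicit construction, Algorithm \ref{alg:D}, which is not a Durfee-type dissection at all. Reading $\nu$ from right to left, each bottom entry $b_i$ grows the running non-overlined partition by adding $1$ to each of its first $b_i$ parts (padding with zeros as needed), and each top entry $a_i$ is then appended as a new part, placed among the overlined or non-overlined parts according to whether $b_i$ carried an overline. In particular the number of columns of $\nu$ equals the number of parts of $\alpha$, each of which becomes one part of $\lambda$ together with the parts manufactured from the bottom row --- an invariant with no simple relation to the Durfee square. If you wish to salvage a dissection-style argument you would need a ``diagonal'' adapted to overpartitions that reproduces this column count; the ordinary Durfee square is demonstrably not it.
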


Using this bijection, we can define the Dyson rank of $\nu$ to be $\oo{r}_D(\lambda)$.
We see a generating series for the Dyson ranks of Frobenius representations in the following lemma.

\begin{lem}[Lovejoy \cite{LovejoyD}]
The coefficient of $z^mq^n$ in the series
\begin{align*}
\sum_{n=0}^\infty \frac{(-1;q)_nq^{\tfrac{n^2+n}{2}}}{(zq, z^{-1}q;q)_n}
\end{align*}
is equal to the number of generalized Frobenius representations $\nu = (\alpha, \beta)^T$ with  $|\nu| = n$, where $\alpha$ is a partition into  distinct parts and $\beta$ is an overpartition into  nonnegative parts, and $\oo{r}_D(\nu) = m$.
\end{lem}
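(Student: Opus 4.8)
The plan is to fix the number of columns $n$ (the summation index) and to show that the $n$th summand is exactly the two--variable generating function $\sum_\nu z^{\oo{r}_D(\nu)} q^{|\nu|}$, the sum taken over those Frobenius representations $\nu = (\alpha,\beta)^T$ that have exactly $n$ columns, with $\alpha$ a partition into $n$ distinct positive parts and $\beta$ an overpartition into $n$ nonnegative parts. Summing over $n \ge 0$ then accounts for every such representation exactly once, since the number of columns is an invariant. Throughout, $q$ records the weight $|\nu| = |\alpha| + |\beta|$ and $z$ records the Dyson rank.

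First I would treat the top row. Writing $\alpha = (a_1 > a_2 > \cdots > a_n \ge 1)$ and subtracting the staircase via $a_i = d_i + (n+1-i)$ converts $\alpha$ into an ordinary partition $d = (d_1 \ge \cdots \ge d_n \ge 0)$ with at most $n$ parts, where $|\alpha| = |d| + \tfrac{n^2+n}{2}$ and $d_1 = \ell(\alpha) - n$. Conjugating $d$ and expanding as a geometric product gives $\sum_d z^{d_1} q^{|d|} = 1/(zq;q)_n$, so the top row contributes precisely $q^{(n^2+n)/2}/(zq;q)_n$, with $z$ marking the statistic $\ell(\alpha) - n$. For the bottom row, I would use the fact that $(-1;q)_n/(z^{-1}q;q)_n$ is the generating function for overpartitions $\beta$ into exactly $n$ nonnegative parts---with the convention that a single $\overline{0}$ is permitted---where $q$ marks $|\beta|$ and $z^{-1}$ marks the largest part $\ell(\beta)$; here $1/(z^{-1}q;q)_n$ produces the underlying partition with at most $n$ parts and $(-1;q)_n$ accounts for the first--occurrence overlines. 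Multiplying the two rows reproduces the summand exactly.

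The crux is to identify the combined $z$-exponent $\big(\ell(\alpha)-n\big) - \ell(\beta)$ with the Dyson rank $\oo{r}_D(\nu)$. Here I would appeal to the bijection of Theorem~\ref{Dbi}: reading the Frobenius geometry off the corresponding overpartition $\lambda$, the largest part satisfies $\ell(\lambda) = \ell(\alpha)$ while the number of parts satisfies $\#(\lambda) = n + \ell(\beta)$ (the $n$ diagonal columns together with the leg recorded by $\beta$). Since $\oo{r}_D(\nu) := \oo{r}_D(\lambda) = \ell(\lambda) - \#(\lambda)$, this yields $\oo{r}_D(\nu) = \ell(\alpha) - n - \ell(\beta)$, matching the tracked exponent. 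I would sanity--check this against small weights: at weights $2$ and $3$ the multiset of ranks produced by the Frobenius representations agrees with the ranks of all overpartitions of that weight.

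The main obstacle is exactly this last identification of the rank: one must verify carefully, through Lovejoy's bijection, that the largest part and the number of parts of $\lambda$ correspond to $\ell(\alpha)$ and $n+\ell(\beta)$ respectively, rather than to shifted quantities, and that the overline on the leading entry of $\beta$ (including the possibility of $\overline{0}$) is bookkept consistently with the $(-1;q)_n$ factor and leaves the rank unchanged. A secondary technical point is justifying the bottom--row generating function: the overlines do not factor off uniformly from the underlying partition, so the identity $(-1;q)_n/(z^{-1}q;q)_n = \sum_\beta z^{-\ell(\beta)} q^{|\beta|}$ should be established by a direct argument or a recurrence on overpartitions into a fixed number of nonnegative parts, rather than by naively splitting the product.
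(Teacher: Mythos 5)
Your decomposition of the $n$th summand into a top-row factor $q^{(n^2+n)/2}/(zq;q)_n$ and a bottom-row factor $(-1;q)_n/(z^{-1}q;q)_n$ is the right starting point, and your treatment of the top row (staircase subtraction, with $z$ marking $\ell(\alpha)-n = r(\alpha)$) matches Lemma~\ref{initrun}. The gap is in the bottom row. The identity you want, $(-1;q)_n/(z^{-1}q;q)_n = \sum_\beta z^{-\ell(\beta)}q^{|\beta|}$ over overpartitions $\beta$ into $n$ nonnegative parts, is false: already for $n=2$ the coefficient of $q^1$ on the left is $2z^{-1}+2$, whereas the four overpartitions $(1,0)$, $(\overline{1},0)$, $(1,\overline{0})$, $(\overline{1},\overline{0})$ all have largest part $1$ and would contribute $4z^{-1}$. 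The statistic actually tracked by $z^{-1}$ is $\oo{r}_{CL}(\beta)+1$, i.e.\ $\ell(\beta)$ minus the number of overlined parts of $\beta$ strictly below $\ell(\beta)$; this is Lemma~\ref{overrun} with $s=1$, $t=n$. The point you flag as a ``secondary technical point'' is in fact fatal to the stated identity: $(-1;q)_n$ cannot be read as an independent choice of overlines, because it generates a partition into distinct nonnegative parts less than $n$ that must be merged into the column-generated partition via the Joichi--Stanton map (Algorithm~\ref{alg:JS}), and each merge step both overlines a part and increments every part above it, thereby changing $\ell(\beta)$.

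Consequently your rank identification $\oo{r}_D(\nu) = \ell(\alpha) - n - \ell(\beta)$ is also incorrect; the correct formula is $r(\alpha) - (\oo{r}_{CL}(\beta)+1) = \ell(\alpha) - n - \ell(\beta) + \#(\beta_<)$, which is precisely the $k=1$ case of the first rank $\rho_1^1$ in Section~\ref{first}. A concrete counterexample is the representation of Table~\ref{alg2}: $\alpha = (3,2,1)$, $\beta = (\overline{4},4,\overline{3})$, $\lambda = (3,3,3,3,3,\overline{2})$, so $\oo{r}_D(\lambda) = 3-6 = -3$, while your formula gives $3-3-4 = -4$. (Likewise $\#(\lambda) = 6 \neq n + \ell(\beta) = 7$, so the diagonal-hook reading of Lovejoy's first Frobenius symbol that you invoke is not the correct geometry for this bijection.) Your sanity check at weights $2$ and $3$ passes only by accident: at those weights no admissible $\beta$ has an overlined part strictly below its largest part, so the correction term $\#(\beta_<)$ vanishes. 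The first discrepancy occurs at weight $4$, where your reading assigns rank $-1$ to all four representations with $\alpha = (2,1)$, whereas the true coefficient of $q^4$ in the $n=2$ summand is $2(1+z+z^{-1})$.
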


Thus, Theorem \ref{L1thm} reduces to the following $q$-series transformation.

\begin{lem}[Lovejoy \cite{LovejoyD}] For $z \neq 0$,
\begin{multline} \label{LDgen}
\ovr \bigg( 1+ 2 \sum_{n=1}^\infty \frac{(1-z)(1-z^{-1})(-1)^nq^{n^2+n}}{(1-zq^{n})(1-z^{-1}q^{n})} \bigg)\\ =
\sum_{n=0}^\infty \frac{(-1;q)_nq^{\tfrac{n^2+n}{2}}}{(zq,z^{-1}q;q)_n}.
\end{multline}
\end{lem}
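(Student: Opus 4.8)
The plan is to treat \eqref{LDgen} as a specialization of the very-well-poised transformation of Theorem~\ref{Andrews}. First I would recast the left-hand side in a more symmetric form. Setting
\[
T_n := \frac{(1-z)(1-z^{-1})(-1)^nq^{n^2+n}}{(1-zq^{n})(1-z^{-1}q^{n})} = \frac{\aqprod{z}{q}{n}\aqprod{z^{-1}}{q}{n}}{\aqprod{zq}{q}{n}\aqprod{z^{-1}q}{q}{n}}(-1)^nq^{n^2+n},
\]
one checks $T_0=1$, and, using $1-zq^{-n}=-zq^{-n}(1-z^{-1}q^{n})$ together with the companion identity for $1-z^{-1}q^{-n}$, that the denominator of $T_{-n}$ equals $q^{-2n}$ times that of $T_n$; the surviving powers of $q$ then give $T_{-n}=T_n$. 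Hence the bracketed factor on the left of \eqref{LDgen} is the bilateral sum $\sum_{n\in\ZZ}T_n$, and the left-hand side equals $\ovr\sum_{n\in\ZZ}T_n$, a very-well-poised bilateral series scaled by the overline factor. This reformulation makes the very-well-poised shape explicit and is the natural target for Theorem~\ref{Andrews}.

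Next I would identify each side with one side of the transformation in Theorem~\ref{Andrews} (equivalently Corollary~\ref{swerdnA}) under a suitable choice of $a, b, c$ followed by confluent and $N\to\infty$ limits. The right-hand Frobenius sum is, after such a limit, a terminating balanced series: choosing the parameters so that $aq/b=zq$ and $aq/c=z^{-1}q$ produces the denominator $\aqprod{zq,z^{-1}q}{q}{n}$, the choice $aq/bc=-1$ supplies the numerator $\aqprod{-1}{q}{n}$, and sending a remaining free parameter $\tau\to\infty$ produces the Gaussian via $\lim_{\tau\to\infty}\aqprod{\tau}{q}{n}(-q/\tau)^n=q^{(n^2+n)/2}$. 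With these substitutions one side of Theorem~\ref{Andrews} collapses precisely to $\sum_{n\geq0}\aqprod{-1}{q}{n}q^{(n^2+n)/2}/\aqprod{zq,z^{-1}q}{q}{n}$, the right-hand side of \eqref{LDgen}. Under the same specialization the opposite side should become the very-well-poised sum $\sum_{n}T_n$, while the infinite products generated by the $N\to\infty$ limit must, after simplification by Lemma~\ref{reduce}, reorganize into the overline factor $\ovr$.

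The main obstacle is the limiting bookkeeping rather than any single algebraic step. I must take the confluent limit(s) together with $N\to\infty$, justify convergence and the interchange of limit and summation (legitimate here because the truncated series converge uniformly on compact subsets of $\{|q|<1\}$), and, most delicately, verify that the products which survive the $N\to\infty$ limit assemble to exactly $(-q;q)_\infty/(q;q)_\infty$ and that the exponent of $q$ emerges as $n^2+n$ rather than a half-integer power. Once the two specializations are matched, comparing the coefficient of $z^mq^n$ in the two resulting series yields \eqref{LDgen}.
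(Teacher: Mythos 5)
Your proposal follows essentially the same route as the paper, which establishes this lemma as the $k=1$ case of Theorem \ref{firsthype}: specialize Corollary \ref{swerdnA} with two parameter pairs, let $N\to\infty$, set $a=1$ and $(b_1,c_1)=(z,z^{-1})$, put $-1$ in the remaining $b$-slot, and send the last free parameter to infinity to produce the Gaussian $q^{(n^2+n)/2}$, finishing with Lemma \ref{reduce} and multiplication by $\tfrac{(-q;q)_\infty}{(q;q)_\infty}$. One small correction to your parameter bookkeeping: the factor $(-1;q)_n$ arises directly as $(b_2;q)_{N_1}$ in the numerator of the transformed sum from the substitution $b_2=-1$, not from arranging $aq/(b_2c_2)=-1$; indeed with $(b_1,c_1)=(z,z^{-1})$ and $a=1$ one has $aq/(b_1c_1)=q$, so that factor simply cancels against the denominator $(q;q)_{n_1}$.
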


The proof of Lemma \ref{LDgen} involves a limiting case of the $q$-Watson-Whipple transformation, or equivalently, the case $k=2$ in Theorem \ref{Andrews}.
Full details of the transformation may be seen as the case $k=1$ in Section \ref{first}.
We now state the algorithm which produces the bijection in Theorem \ref{Dbi}.

\begin{alg}[Corteel, Lovejoy \cite{Opartns}]
	\label{alg:D}
	\hspace{1cm} \newline
	Input: A Frobenius representation 
	\begin{align*}
		\nu =
	\begin{pmatrix}
		a_1 &a_2 &\ldots &a_k\\
		b_1 &b_2 &\ldots &b_k
		\end{pmatrix}
	\end{align*}
as described in Proposition \ref{Dbi}.

\noindent Output: An overpartition $\lambda$ such that $|\lambda| = |\nu|$.	

\begin{enumerate}
\item Initialize $\lambda_1 = \lambda_2 = \emptyset$.

\item We treat $\lambda_1$ as a partition into $b_k$ nonnegative parts. Delete $b_k$ from $\nu$ and add 1 to each part of $\lambda_1$.

\item Delete $a_k$ from $\nu$. If $b_k$ was overlined, append $a_k$ as a part of $\lambda_1$. Otherwise, if $b_k$ was not overlined, append $a_k$ as a part of $\lambda_2$.

\item Repeat Steps (2) and (3) until all parts of $\nu$ are exhausted.

\item Because $(\nklist{a}{k})$ was a partition into distinct parts, $\lambda_2$ is also a partition into distinct parts. We define the output $\lambda$ to be the overpartition with  non-overlined parts given by $\lambda_1$ and overlined parts given by $\lambda_2$.
\end{enumerate}
\end{alg}

An example of Algorithm \ref{alg:D} is shown in Table \ref{alg2}. Further details may be found in work of Lovejoy \cite{LovejoyD}.

\begin{table}[h]
	\centering
	\begin{tabular}{|c|c|c|c|c|}
	\hline
	Iteration & $\alpha$ & $\beta$ & $\lambda_1$ & $\lambda_2$\\
	\hline
	$0$ & $(3,2,1)$ & $(\overline{4}, 4, \overline{3})$ & $\emptyset$ & $\emptyset$\\
	1 &$(3,2)$&$(\overline{4}, 4)$&$(1,1,1,1)$&$\emptyset$\\
	2 &$(3)$&$(\overline{4})$&$(2,2,2,2)$&$(2)$\\
	3 &$\emptyset$&$\emptyset$&$(3,3,3,3,3)$&$(2)$\\
	\hline
	\end{tabular}
	\caption{\label{alg2} A demonstration of Algorithm \ref{alg:D}. This produces the overpartition $\lambda = (3,3,3,3,3,\overline{2})$.}
\end{table}

	The generating series for the $M_2$-rank involves a second family of Frobenius representations, which appear in the following theorem. 

\begin{thm} [Lovejoy\cite{LovejoyM2}] \label{M2bi}
	There is a bijection between overpartitions $\lambda$ and  generalized Frobenius partitions $\nu = (\alpha, \beta)^T$  where $\alpha$ is an overpartition into odd parts and $\beta$ is a partition into nonnegative parts where odd parts may not repeat such that $|\lambda| = |\nu|$.
\end{thm}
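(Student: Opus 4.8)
The plan is to construct an explicit bijective algorithm analogous to Algorithm \ref{alg:D}, adapted to the parity constraints of the $M_2$-rank setting. Given a Frobenius representation $\nu = (\alpha,\beta)^T$ where $\alpha$ is an overpartition into odd parts and $\beta$ is a partition into nonnegative parts whose odd parts do not repeat, I would process the columns of $\nu$ from right to left, maintaining two auxiliary accumulators $\lambda_1$ and $\lambda_2$ that will become the non-overlined and overlined parts of the output overpartition $\lambda$. At each iteration I delete the rightmost column $(a_k, b_k)^T$, increment the existing parts of the accumulators to record the ``staircase'' contribution, and append the deleted entries into $\lambda_1$ or $\lambda_2$ according to whether the relevant entry was overlined. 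The overline bookkeeping must be arranged so that the distinctness of the overlined parts in $\lambda$ corresponds exactly to the ``odd parts may not repeat'' condition on $\beta$ and the overpartition structure on $\alpha$.

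The key steps, in order, are as follows. First I would fix the precise increment rule: because $\alpha$ consists of odd parts, the natural staircase increment here should be by $2$ rather than by $1$, reflecting the $\lceil \ell/2 \rceil$ appearing in the definition of $\oo{r}_{M_2}$ and the factor $q^{n^2+2n}$ in \eqref{L2}. Second, I would verify weight preservation: at every iteration the total deleted from $\nu$ must equal the total added to $\lambda_1$ and $\lambda_2$, so that $|\lambda| = |\nu|$ holds termwise and hence in aggregate. Third, I would check that the output is a genuine overpartition into positive parts, that is, the parts are nonincreasing and that the overlined parts (coming from $\lambda_2$) are distinct. Fourth, and most importantly, I would exhibit the inverse algorithm: starting from an arbitrary overpartition $\lambda$, I would peel off the staircase and reconstruct the columns one at a time, showing that the reconstructed $\alpha$ lands in the set of overpartitions into odd parts and the reconstructed $\beta$ lands in the set of partitions into nonnegative parts with nonrepeating odd parts. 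Demonstrating that these two algorithms are mutually inverse establishes the bijection.

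The main obstacle I anticipate is the fourth step, namely tracking the parity and repetition constraints through the inverse map. Unlike the Dyson case, where the top row is simply a partition into distinct parts and the overline data transfers cleanly, here the odd/even dichotomy interacts with the overlining in both rows simultaneously: an overlined entry in $\alpha$ and a non-overlined odd entry in $\beta$ play asymmetric roles, and I must ensure the inverse never produces a forbidden configuration (such as a repeated odd part in $\beta$ or an even part in $\alpha$). I expect this to require a careful case analysis on the parity of each reconstructed column entry, together with an invariant asserting that after $j$ iterations the partial accumulators already satisfy the target monotonicity and distinctness conditions. Once that invariant is formulated correctly, the remaining verifications should reduce to routine termwise bookkeeping of the kind already illustrated for Algorithm \ref{alg:D} in Table \ref{alg2}.
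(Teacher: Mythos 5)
Your plan transplants the staircase/accumulator mechanism of Algorithm \ref{alg:D} into the second Frobenius setting, but that mechanism does not survive the transplant, and the paper's proof (Lovejoy's, given here as Algorithms \ref{alg:M2} and \ref{alg:2M}) is built on an entirely different principle. The Dyson-case iteration works because (i) the top row $\alpha$ is a partition into \emph{distinct} parts, so its entries can be appended one at a time to an accumulator without violating the distinctness of the overlined parts of $\lambda$, and (ii) the overline data sits on the bottom row $\beta$ and routes each $a_k$ into $\lambda_1$ or $\lambda_2$. In the second representation both hypotheses fail: here $\alpha$ is an overpartition into odd parts whose non-overlined parts may repeat (e.g.\ $\alpha = (3,3)$), so no staircase --- with increments of $1$, of $2$, or of anything else --- can be peeled from it, and $\beta$ carries no overline data at all, so your routing rule has nothing to read. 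A weight check also fails: deleting $b_k$ from $\nu$ while adding $2$ to each of some number of accumulator parts cannot balance unless $b_k$ is even, which it need not be.

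The actual bijection is not an accumulator construction at all. One first completes $\alpha$ by inserting $\overline{n}$ for every odd $n < a_1$ not already appearing overlined in $\alpha$, paired with a part $-n$ in $\beta$; after reordering $\beta$, each column then contributes a \emph{single} part $\ell_i = a_i + b_i$ of $\lambda$, overlined or not according to the parity of $b_i$ and the marking of $a_i$. The inverse (Algorithm \ref{alg:2M}) dissects $\lambda$ into four subpartitions by parity and overlining and greedily matches smallest parts against the odd staircase $1, 3, 5, \dots$. The parity and repetition bookkeeping you flag as the ``main obstacle'' is indeed where the content lies, but it is resolved by this completion-and-pairing architecture, not by an invariant on partial accumulators; the gap in your proposal is therefore not a missing verification at step four but a structural choice at the outset that cannot be carried through.
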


As was the case with the Dyson rank, we can define the $M_2$-rank of $\nu$ to be $\oo{r}_{M_2}(\lambda)$.
We see a generating series for the $M_2$-ranks of Frobenius representations in the following lemma.

\begin{lem}[Lovejoy \cite{LovejoyM2}]
	The coefficient of $z^mq^n$ in the series
	\begin{align*}
	\sum_{n \geq 0} \frac{\aqprod{-1}{q}{2n}q^n}{\aqprod{zq^2, z^{-1}q^2}{q^2}{n}}
	\end{align*}
	is equal to the number of Frobenius representations $\nu = (\alpha, \beta)^T$ with $|\nu| = n$, where $\alpha$ is an overpartition into odd parts and $\beta$ is a partition into nonnegative parts,  and $\oo{r}_{M_2}(\nu)=m$.
\end{lem}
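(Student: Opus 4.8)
The plan is to prove the identity combinatorially by organizing the admissible Frobenius representations according to their number of columns, which will be the summation index $n$. First I would establish a closed formula for $\oo{r}_{M_2}(\nu)$ directly in terms of the entries of $\nu=(\alpha,\beta)^T$, rather than routing through the overpartition $\lambda$: using the bijection of Theorem \ref{M2bi} together with the definition of $\oo{r}_{M_2}$, I would track how $\lceil \ell(\lambda)/2\rceil$, $\#(\lambda)$, $n(\lambda_o)$, and $\chi(\lambda)$ are each determined by the top row $\alpha$ (an overpartition into odd parts) and the bottom row $\beta$ (a partition into nonnegative parts with odd parts not repeating). The target is a formula that expresses the $M_2$-rank as a difference of two nonnegative counts, so that a single variable $z$ can record it, with one count feeding positive powers of $z$ and the other feeding powers of $z^{-1}$.

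With that formula in hand, I would show that for a fixed number of columns $n$, the two-variable generating function $\sum_\nu z^{\oo{r}_{M_2}(\nu)}q^{|\nu|}$, summed over all admissible $n$-column representations, equals the $n$th summand $\frac{(-1;q^2)_{2n}\,q^n}{(zq^2,z^{-1}q^2;q^2)_n}$, and then sum over $n$. I would decompose the summand factor by factor. The $q^n$ accounts for the minimal weight of $\alpha$, since an overpartition into $n$ positive odd parts has weight at least $n$. The two reciprocal products build up the remaining even excess in the columns while recording the rank: expanding $\prod_{j=1}^n \frac{1}{1-z^{\pm1}q^{2j}}$ as a sum over partitions into even parts at most $2n$, the exponent of $z^{\pm1}$ counts the number of parts, which I would match to the two nonnegative counts appearing in the rank formula. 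Finally, $(-1;q^2)_{2n}=\prod_{i=0}^{2n-1}(1+q^{2i})$ supplies the $2n$ binary choices encoding the overlining of parts in $\alpha$ and the distinctness of the odd parts in $\beta$ across the $n$ columns.

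The main obstacle is the bookkeeping in the two steps that are easy to state but delicate to verify: pinning down the exact $M_2$-rank formula on the Frobenius side so that it genuinely separates into a $z$-part and a $z^{-1}$-part, and confirming that the $2n$ factors of $(-1;q^2)_{2n}$ match the overline and distinctness data without double counting or sign error. Lemma \ref{reduce} would be used to reconcile staircase shifts where a column's top part interacts with the base weight. I expect the cleanest route is to set up an explicit filling of the $n$ columns, choosing column by column the odd top part together with its overline status and the bottom part together with its parity constraint, then to read off each generating-function factor from that filling and check at the end that the resulting $z$-exponent agrees with the precomputed $\oo{r}_{M_2}(\nu)$.
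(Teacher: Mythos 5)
Your overall architecture is the right one, and it is essentially the route the paper itself takes when it re-derives this machinery for general $k$ in Section \ref{second}: the present lemma is the $k=1$ case of Theorem \ref{2term}, combined with the fact that under the bijection of Theorem \ref{M2bi} the $M_2$-rank splits as $\oo{r}_{M_2}(\nu)=\oo{r}_2(\alpha)-r_2(\beta)$, a difference of the ``second overpartition rank'' of the top row and the ``second partition rank'' of the bottom row. (The paper does not prove this lemma directly; it quotes it from \cite{LovejoyM2}.) However, two points in your plan are not bookkeeping but genuine gaps.

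First, the numerator. Taken literally, $\aqprod{-1}{q^2}{2n}=\prod_{i=0}^{2n-1}(1+q^{2i})$ contributes only even powers of $q$, so it cannot encode the insertion of distinct \emph{odd} parts into $\beta$: inserting the odd part $2i+1$ costs $q^{2i+1}$. The factor that actually arises (compare the $k=1$ specialization of Theorem \ref{2ndhype}) is $\aqprod{-1}{q}{2n}=\aqprod{-1}{q^2}{n}\aqprod{-q}{q^2}{n}$, and the correct split is row by row: $\aqprod{-1}{q^2}{n}q^n/\aqprod{zq^2}{q^2}{n}$ generates $\alpha$ (Lemma \ref{2frob2lemma} with $s=1$, $t=n$) and $\aqprod{-q}{q^2}{n}/\aqprod{z^{-1}q^2}{q^2}{n}$ generates $\beta$ (Lemma \ref{2frob1lemma} with $s=1$, $t=n$). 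Your proposed matching of all $2n$ factors to ``binary choices'' cannot be carried out with the product as you have parsed it. Second, and more substantively, the overline and distinctness data are not independent toggles layered on a fixed filling: the mechanism is a Joichi--Stanton overlay (Algorithms \ref{alg:JS} and \ref{alg:JS2}), in which the base even-part partition generated by $1/\aqprod{zq^2}{q^2}{n}$ is merged with a partition into distinct parts generated by the $\aqprod{-1}{q^2}{n}$ or $\aqprod{-q}{q^2}{n}$ factor, an operation that \emph{changes the part sizes} while provably leaving the $z$-exponent equal to $\oo{r}_2(\alpha)$, resp.\ $r_2(\beta)$. That invariance is the entire content of Lemmas \ref{overrun}, \ref{2frob1lemma} and \ref{2frob2lemma}; without it, the $z$-exponent you read off is half the largest part of the even core, not the rank statistic, and the ``delicate verification'' you defer would fail as set up.
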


Then Theorem \ref{L2thm} reduces to the following $q$-series transformation.

\begin{lem}[Lovejoy \cite{LovejoyM2}]
	 For $z \neq 0$,
	\begin{multline*}
		\ovr \left( 1 + 2 \sum_{n \geq 1} \frac{(1-z)(1-z^{-1})(-1)^nq^{n^2+2n}}{(1-zq^{2n})(1-z^{-1}q^{2n})} \right)\\
		=\sum_{n \geq 0} \frac{\aqprod{-1}{q}{2n}q^n}{\aqprod{zq^2, z^{-1}q^2}{q^2}{n}}.
	\end{multline*}
\end{lem}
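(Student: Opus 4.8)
The plan is to establish this as the base-$q^2$ companion of Lemma \ref{LDgen}, proved by the same device: a limiting case of the $k=1$ Watson--Whipple transformation (Theorem \ref{Andrews}), now run with $q$ replaced by $q^2$. I emphasize at the outset that the two identities are \emph{not} related by the substitution $q\mapsto q^2$ in Lemma \ref{LDgen} (the prefactor, the Gaussian exponent, and the shape of the right-hand Pochhammer all scale differently), so the computation must be carried out afresh; only the template is shared. First I would symmetrize the left-hand side. Writing $t_n$ for the summand $\frac{(1-z)(1-z^{-1})(-1)^nq^{n^2+2n}}{(1-zq^{2n})(1-z^{-1}q^{2n})}$, the factorizations $1-zq^{-2n}=-zq^{-2n}(1-z^{-1}q^{2n})$ and $1-z^{-1}q^{-2n}=-z^{-1}q^{-2n}(1-zq^{2n})$ give $(1-zq^{-2n})(1-z^{-1}q^{-2n})=q^{-4n}(1-zq^{2n})(1-z^{-1}q^{2n})$, whence $t_{-n}=t_n$ and $t_0=1$. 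Thus $1+2\sum_{n\ge1}t_n=\sum_{n\in\ZZ}t_n$, and the left-hand side becomes $\ovr\sum_{n\in\ZZ}t_n$, a product of the overpartition factor with a bilateral very-well-poised sum carrying the Gaussian $q^{n^2}$.

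Next I would invoke the $k=1$ case of Theorem \ref{Andrews} in base $q^2$ and specialize $a\to1$, $b_1=z$, $c_1=z^{-1}$. Under $a\to1$ the well-poised weight $\frac{1-aq^{4n}}{1-a}$ combined with $\frac{\aqprod{a}{q^2}{n}}{\aqprod{q^2}{q^2}{n}}$ tends to $1+q^{2n}$, while the pairs $\frac{\aqprod{z}{q^2}{n}}{\aqprod{zq^2}{q^2}{n}}$ and $\frac{\aqprod{z^{-1}}{q^2}{n}}{\aqprod{z^{-1}q^2}{q^2}{n}}$ collapse by cancellation (in the manner of Lemma \ref{reduce}) to exactly $\frac{1-z}{1-zq^{2n}}$ and $\frac{1-z^{-1}}{1-z^{-1}q^{2n}}$. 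This single specialization already reproduces the factor $\frac{(1-z)(1-z^{-1})}{(1-zq^{2n})(1-z^{-1}q^{2n})}$ on the bilateral side and the denominator $\aqprod{zq^2,z^{-1}q^2}{q^2}{n}$ on the one-sided side. I would then drive the remaining free parameter and the terminating index $N$ to their limiting values ($q^{-2N}\to0$, auxiliary parameter to $\infty$) so as to manufacture the Gaussian $q^{n^2+2n}$ with its sign $(-1)^n$ on the bilateral side and the numerator $\aqprod{-1}{q^2}{2n}$ with the weight $q^n$ on the series side; the finite products the transformation contributes should, after the limit, collect into the prefactor $\ovr$, matching the symmetrized form from the first paragraph against the target series $\sum_{n\ge0}\frac{\aqprod{-1}{q^2}{2n}q^n}{\aqprod{zq^2,z^{-1}q^2}{q^2}{n}}$.

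The main obstacle is precisely this matching and limiting process. The verified specialization $a\to1$, $b_1=z$, $c_1=z^{-1}$ delivers the $z$-dependence of both sides, but it must be combined with a careful choice of the auxiliary parameter and of the order of the $N\to\infty$ and summation limits so that (i) the Gaussian lands on exactly $n^2+2n$ rather than a neighbouring quadratic, (ii) the overpartition prefactor $\ovr$ emerges intact, and (iii) the numerator collapses to $\aqprod{-1}{q^2}{2n}$ (equivalently $\aqprod{-1}{q^4}{n}\aqprod{-q^2}{q^4}{n}$) rather than a length-$n$ base-$q^2$ Pochhammer. Justifying the interchange of the limit with the now-infinite sum, and bookkeeping the accumulated powers of $q$ and the sign, is where essentially all the work lies; once the specialization is fixed, Lemma \ref{reduce} and routine $q$-Pochhammer manipulation finish the identification. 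This runs in parallel, in base $q^2$, to the computation behind Lemma \ref{LDgen}, and is the $k=1$ instance of the transformation developed for Theorem \ref{thm2} in Section \ref{second}.
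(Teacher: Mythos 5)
Your overall strategy --- a limiting case of the Watson--Whipple transformation in base $q^2$ with $a=1$, $b_1=z$, $c_1=z^{-1}$, realized as the $k=1$ instance of Section \ref{second} --- is the right template, and your symmetrization $t_{-n}=t_n$ is correct (though the paper never passes to a bilateral sum; it keeps the unilateral form and extracts the factor $2$ from $(1+q^{2n})\aqprod{-1}{q^2}{n}/\aqprod{-q^2}{q^2}{n}=2$). The genuine gap sits exactly at the step you flag as ``where essentially all the work lies,'' and your proposed resolution of it would fail. First, the one-pair ($k=1$) case of Theorem \ref{Andrews} is not enough: once $a=1$ and $(b_1,c_1)=(z,z^{-1})$ are fixed there is no parameter left to produce the numerator of the right-hand side, which is why the proof of Theorem \ref{2ndhype} substitutes $k\mapsto k+1$ into Corollary \ref{swerdnA} so that a second pair $(b_2,c_2)$ is available. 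Second, and more seriously, your plan for the remaining freedom --- ``auxiliary parameter to $\infty$'' --- is the mechanism of the \emph{first} lemma (Lemma \ref{LDgen}), where $b_2=-1$ and $c_2\to\infty$ via \eqref{cklimit1}--\eqref{cklimit2}. A parameter sent to infinity leaves behind only a Gaussian factor $(-1)^nq^{n^2-n}$ and no residual Pochhammer, so it cannot generate the required numerator, and it doubles the quadratic exponent (one would land on $q^{2n^2+2kn}$ rather than $q^{n^2+2kn}$). The entire point of the divergence announced in the proof of Theorem \ref{2ndhype} is that both extra parameters are given \emph{finite} values, $b_2=-1$ and $c_2=-q$: then $\aqprod{-q}{q^2}{n}/\aqprod{-q}{q^2}{n}=1$ removes the extra Pochhammer on the left, the factor $\bigl(q^{2k+2}/(b_2c_2)\bigr)^n$ lands the exponent exactly on $q^{n^2+2kn}$, and the right-hand numerator becomes $\aqprod{-1,-q}{q^2}{n}=\aqprod{-1}{q}{2n}$.

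A smaller but related point: the quantity that actually emerges is $\aqprod{-1}{q}{2n}=\aqprod{-1}{q^2}{n}\aqprod{-q}{q^2}{n}$, not $\aqprod{-1}{q^2}{2n}$ as printed in the statement (the latter appears to be a typo; compare the right-hand side of Theorem \ref{2ndhype} at $k=1$ and the multiplicand \eqref{eq:B2-1}). Your parenthetical factorization $\aqprod{-1}{q^2}{2n}=\aqprod{-1}{q^4}{n}\aqprod{-q^2}{q^4}{n}$ takes that typo at face value and would force specializations in base $q^4$, which the base-$q^2$ transformation cannot deliver --- a further sign that the limit-to-infinity plan cannot be completed as written.
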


As before, the proof utilizes a limiting case of the $q$-Watson-Whipple transformation. Full details may be seen as the case $k=1$ in Section \ref{second}. We now state the algorithm which gives the bijection in Theorem \ref{M2bi}.

\begin{alg}[Lovejoy \cite{LovejoyM2}]
	\label{alg:M2}
	\hspace{1cm} \newline
	Input: A Frobenius representation 
	\begin{align*}
		\nu =
		\begin{pmatrix}
		\alpha\\
		\beta
		\end{pmatrix} =
		\begin{pmatrix}
		a_1 & a_2 &\ldots &a_k\\
		b_1 & b_2 &\ldots &b_k\\
		\end{pmatrix}
	\end{align*} 
as described in Theorem \ref{M2bi}.

\noindent Output: An overpartition $\lambda$ such that $|\lambda| = |\nu|$.

\begin{enumerate}
\item Initialize $\lambda = \emptyset$.

\item For each odd integer $n<a_1$ which does not appear overlined in $\alpha$, we insert $\overline{n}$ in its correct position in $\alpha$. We also append $-n$ as a part of $\beta$.

\item Reindex the parts of $\beta$ so that from left to right, odd integers appear in increasing order, followed by even integers in decreasing order.

\item For each pair $(a_i, b_i)$, let $\ell_i = a_i+b_i$. If $b_i$ is even, append $\ell_i$ as a part of $\lambda$ with the same overline marking as $a_i$. If $b_i$ is odd, append $\ell_i$ as a part of $\lambda$ with the opposite overline marking as $a_i$. Reindex the $\ell_i$ in non-increasing order, with the convention that $\overline{n} > n$.
\end{enumerate}
\end{alg}

\begin{table}[h]
	\begin{tabular}{|c|l|l|l|}
		\hline
		Step & $\alpha$ & $\beta$ & $\lambda$ \\
		\hline 1&$(5, \overline{1})$&$(6,5)$& $\emptyset$ \\
		2&$(5, \overline{3}, \overline 1)$&$(6,5,-3)$&$\emptyset$ \\
		3&$(5, \overline{3}, \overline 1)$&$(-3, 5, 6)$&$\emptyset$\\
		4&$\emptyset$&$\emptyset$& $(8,\overline{7}, \overline{2})$\\
		\hline
	\end{tabular}
\caption{\label{alg3} Demonstration of Algorithm \ref{alg:M2}.}
\end{table}

\noindent An example of Algorithm 2 is demonstrated in Table \ref{alg3}. The reverse algorithm is a modification of Corteel and Lovejoy's work on vector partitions \cite{CLFrob}. We present it below for completeness. For this algorithm, we let $s(\lambda)$ denote the smallest part of the overpartition $\lambda$.

\begin{alg}[Corteel, Lovejoy
\cite{CLFrob} \cite{LovejoyM2}] \label{alg:2M}
Input: An overpartition $\lambda$.

\noindent Output: A second Frobenius representation $\nu = (\alpha, \beta)^T$ such that $|\nu| = |\lambda|$.

\begin{enumerate}
\item Initialize $\alpha = \beta := \emptyset$ and $a :=1$. Dissect $\lambda$ into four partitions $\overline{\pi}_e$, $ \pi_e$, $\overline{\pi}_o$, and  $\pi_o$ as follows. Let $\overline{\pi}_e$ be the subpartition consisting of all even overlined parts of $\lambda$. Let $\pi_e$ be the subpartition consisting of all even non-overlined parts of $\lambda$. We define $\overline{\pi}_o$ and $\pi_o$ analogously for the odd parts of $\lambda$.

\item If $\overline{\pi}_o = \emptyset$, or if $s(\overline{\pi}_o) \leq  s(\pi_o)$, then append $\overline{a}$ as a part of $\alpha$, append $s(\overline{\pi}_o)-a$ as a part of $\beta$, and delete the smallest part of $\overline{\pi}_o$.

\item Otherwise, append $a$ as a part of $\alpha$, append $s(\pi_o)-a$ as a part of $\beta$, delete the smallest part of $\pi_o$, and set $a := a+2$.

\item Repeat Steps (2) and (3) until both $\overline{\pi}_o$ and $\pi_o$ are exhausted.

\item If $\pi_e = \emptyset$, or if $s(\pi_e) <  s(\overline{\pi}_o)$, then append $a$ as a part of $\alpha$, append $s(\overline{\pi}_e)-a$ as a part of $\beta$, and delete the smallest part of $\overline{\pi}_e$.

\item Otherwise, append $\overline{a}$ as a part of $\alpha$, append $s(\pi_e)-a$ as a part of $\beta$, delete the smallest part of $\pi_e$, and set $a := a+2$.

\item Repeat Steps (5) and (6) until both $\overline{\pi}_o$ and $\pi_o$ are exhausted.

\item If a part $-n$ occurs in $\beta$, delete both $-n$ from $\beta$ and $\overline{n}$ from $\alpha$.

\end{enumerate}
\end{alg}

\noindent An example of Algorithm \ref{alg:2M} is given in Table \ref{2gla}.

This ends our presentation of previous results. We now introduce the notion of buffered Frobenius representations.

\begin{table}[h]
\centering
	\begin{tabular}{|c|l|l|l|l|l|l|l|}
	\hline
	Iteration & $\overline{\pi}_e$ &$\pi_e$ &$\overline{\pi}_o$ &$\pi_o$ & $a$ & $\alpha$ & $\beta$ \\
	\hline
	0 & $(\overline{2})$ & $(8)$ & $(\overline{7})$ & $\emptyset$ & $1$ & $\emptyset$ & $\emptyset$\\
	1 & $(\overline{2})$ & $(8)$ & $\emptyset$ & $\emptyset$ & $3$ & $(\overline{1})$ & $(6)$\\
	2 & $(\overline{2})$ & $\emptyset$ & $\emptyset$ & $\emptyset$ & $5$ & $(\overline{3},\overline{1})$ & $(6,5)$\\
	3 & $\emptyset$ & $\emptyset$ & $\emptyset$ & $\emptyset$ & $5$ & $(5, \overline{3},1)$ & $(6,5,-3)$\\
	4 & $\emptyset$ & $\emptyset$ & $\emptyset$ & $\emptyset$ & $5$ & $(5, \oo{1})$ & $(6,5)$\\
	\hline
	\end{tabular}
\caption{\label{2gla} Demonstration of Algorithm \ref{alg:2M}.}
\end{table}

\section{Buffered Frobenius Representations} \label{sec:buff}

We use the following abbreviated notation for the rest of the paper.
If \nktext{A}{k} and \nktext{B}{k} are sets,
we write
\begin{align*}
	\begin{pmatrix}
	\nkand{\alpha}{k}\\
	\nkand{\beta}{k}
	\end{pmatrix}
	 \in
	\begin{pmatrix}
	\nkand{A}{k}\\
	\nkand{B}{k}
	\end{pmatrix}
\end{align*}
to mean that $\alpha_i \in A_i$ and $\beta_i \in B_i$ for all $1 \leq i \leq k$.

\begin{defn}
Let $\oo{P_0}$ denote the set of overpartitions into nonnegative parts, and let $P_0$ denote the set of partitions into nonnegative parts.
A buffered Frobenius representation is a two rowed array
\begin{align*}
	\nu
	=
	\begin{pmatrix}
	\nkand{\alpha}{k}\\
	\nkand{\beta}{k}
	\end{pmatrix}
	\in
	\begin{pmatrix}
		\oo{P_0} & P_0 & \dots & P_0 \\
		\oo{P_0} & P_0 & \dots & P_0
	\end{pmatrix},
\end{align*}
where for all $i$, we have  $\#(\alpha_i) \geq \#(\alpha_{i+1})$ and $\#(\beta_{i}) = \#(\alpha_i) $. 
Additionally, we may mark either of $\alpha_i$ or $\beta_i$ with a hat if $i<k$.
\end{defn}

The \emph{weight} of a buffered Frobenius representation is defined to be
\begin{align*}
	|\nu| :=
 \sum_{1\leq i \leq k} |\alpha_{i}|+|\beta_{i}|.
\end{align*}
We see that every generalized Frobenius representation as in Section \ref{prelim}
\begin{align*}
	\begin{pmatrix}
		\nkand{a}{k}\\
		\nkand{b}{k}
	\end{pmatrix}
\end{align*}
can be interpreted as a buffered Frobenius representation
\begin{align*}
	\begin{pmatrix}
		\alpha_1\\
		\beta_1
	\end{pmatrix}
	=
	\begin{pmatrix}
		(\nklist{a}{k})\\
		(\nklist{b}{k})
	\end{pmatrix},
\end{align*}
although this only produces simple examples.
The hat notation serves to enrich the combinatorics of buffered Frobenius representations, similar to the purpose of overlining the parts of an overpartition.
For example,
\begin{align}  \label{frobex}
	\begin{pmatrix}
		\alpha_1 & \alpha_2\\
		\beta_1 & \beta_2
	\end{pmatrix}
	=
	\begin{pmatrix}
		\widehat{(3,3,2,1)} & (1, 0, 0 )\\
		(\oo{3},\oo{2},2,2) & (4, 1, 1)
	\end{pmatrix}
\end{align}
is a buffered Frobenius representation.
Note that $\ell(\beta_2) > \ell (\beta_1)$; only the sequences $\{ \#(\alpha_i) \}$ and $\{ \#(\beta_i) \}$ must be nonincreasing.

\subsection{Buffered Young Tableaux}
Given a buffered Frobenius representation
\begin{align*}
	\nu =
	\begin{pmatrix}
		\nkand{\alpha}{k}\\
		\nkand{\beta}{k}
	\end{pmatrix}
\end{align*}
 we construct \emph{buffered Young tableaux} to represent the entries of $\nu$ by using $k$ colors as follows.

First, we  draw the Young tableau for $\alpha_1$ in the first color.
Next, we draw the Young tableau for $\alpha_2$ in the second color.
However, we align the boxes for $\alpha_2$ to the right edge of the tableau for $\alpha_1$.
If $\alpha_1$ is marked with a hat, we shift the tableau for $\alpha_{2}$ to the right by one unit and leave a buffer between $\alpha_1$ and $\alpha_{2}$.
For example, if $\alpha_1 = \widehat{(3, 2, 1)}$ and $\alpha_2 = (2, 2, 1)$, then we produce the tableaux in Figure \ref{fig:B-tab}.

\begin{figure}[h]
	\centering
	\includegraphics[scale=0.4]{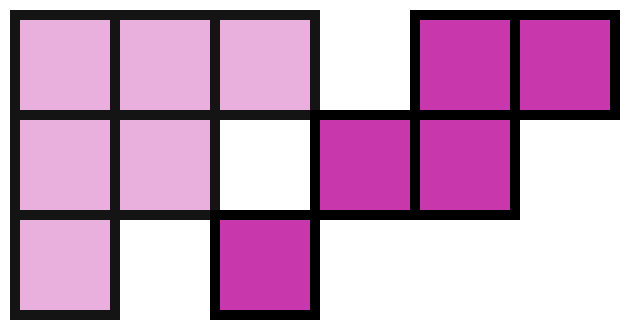}
	\caption{
		\label{fig:B-tab}
		The buffered Young tableaux for $\alpha_1 = \widehat{(3, 2, 1)}$ and $\alpha_2 = (2, 2, 1)$.
	}
\end{figure}

We then continue by drawing the tableau for each $\alpha_i$ in the $i$th color, aligned to the right edge of the preceding tableau, and shifted to the right by one unit if $\alpha_i$ is marked with a hat.
We draw the tableaux for the $\beta_i$ in the same manner.
For example, Figure \ref{frobex-young} shows the  buffered Young tableaux for the buffered Frobenius representation in \eqref{frobex}.

\begin{figure}[h]
	\centering
	\includegraphics[scale=0.4]{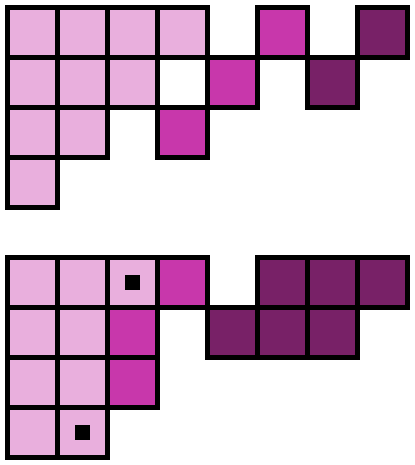}
	\caption{The buffered Young Tableaux for the buffered Frobenius representations in \eqref{frobex}.}
	\label{frobex-young}
\end{figure}

Note that entries marked with a hat increase the width of the tableaux without increasing the number of boxes.
  There are no tableaux which  could indicate a buffer to the right of $\alpha_k$ or $\beta_k$, which corresponds to the restriction that neither $\alpha_k$  or $\beta_k $ can be marked with a hat.

\subsection{The Jigsaw Map}
Visualizing buffered Frobenius representations by their  tableaux suggests that we should  interpret buffered Frobenius representations as the exploded Young tableaux of generalized Frobenius representations.
To reassemble the generalized Frobenius representation, we use the \emph{jigsaw map}.

Let $\nu$ be a buffered Frobenius representation
\begin{align*}
	\nu = \abarray,
\end{align*}
where for all $i$,
\begin{align*}
	\alpha_i &= (a_{(i,1)}, a_{(i,2)}, \dots, a_{(i,k_i)} )\\
	\beta_i &= (b_{(i,1)}, b_{(i,2)}, \dots, b_{(i,k_i)} ).
\end{align*}
We seek to construct a generalized Frobenius representation
\begin{align*}
	j(\nu)
	=
	\begin{pmatrix}
		\nkand{a}{k_1}\\
		\nkand{b}{k_1}
	\end{pmatrix}
	,
\end{align*}
where $(\nklist{a}{k_1})$ and $(\nklist{b}{k_1})$ are partitions or overpartitions into nonnegative parts.

First, discard any hats from the entries of $\nu$.
We then rewrite each $\alpha_i$ and $\beta_i$ as a partition into $k_1$ nonnegative parts,
\begin{align*}
	\alpha_i 
	&= (
		\overbrace{
			a_{(i,1)}, a_{(i,2)}, \dots, a_{(i,k_i)}, 0, \dots, 0
		}^{k_1}
	),\\
	\beta_i 
	&= (
		\overbrace{
			b_{(i,1)}, b_{(i,2)}, \dots, b_{(i,k_i)}, 0, \dots, 0
		}^{k_1}
	)
	.
\end{align*}
For all $1\leq j \leq k_1$, we define the integers $a_j$ to be 
\begin{align*}
	a_j &= \sum_{i=1}^{k_1} a_{(i,j)},
	\\
	b_j &= \sum_{i=1}^{k_1} b_{(i,j)}
	.
\end{align*}
Finally, we overline $a_j$ or $b_j$ if and only if the $j$th part of $\alpha_1$ or $\beta_1$ is overlined, respectively\footnote{
This is why only $\alpha_1$ and $\beta_1$ may be overpartitions.
}.
Graphically, this is equivalent to removing the colors from the buffered Young tableaux and aligning the boxes to the left, with careful attention paid to the convention for overlined parts.

We now move away from the generic treatment in order to present Theorem \ref{thm1}.

\section{Buffered Frobenius Representations of the First Kind} \label{first}

In order to apply Corollary \ref{swerdnA} to $\Rk$, we consider the series
\begin{multline} \label{Frobk}
	\Frobk\\ := \ovr \bigg( 1+ 2 \sum_{n=1}^\infty (-1)^nq^{n^2+kn}\prod_{i=1}^k\frac{(1-x_i)(1-x^{-1}_i)}{(1-x_iq^{n})(1-x^{-1}_iq^{n})} \bigg),
\end{multline}
bearing in mind that
$$
	\overline{R}_k(\sqrt[k]{z},\zeta_k\sqrt[k]{z},\dots, \zeta_k^{k-1}\sqrt[k]{z};q) = \overline{R[k]}(z,q).
$$
We see a transformation of $\Frobk$ in the theorem below.

\begin{thm} \label{firsthype}
	Let $k \geq 1$ be a positive integer. Then we have
	\begin{multline*}
		\ovr \bigg( 1+ 2 \sum_{n=1}^\infty (-1)^nq^{n^2+kn}\prod_{i=1}^k\frac{(1-x_i)(1-x^{-1}_i)}{(1-x_iq^{n})(1-x^{-1}_iq^{n})} \bigg)\\
		= \sum_{{
			n_1 ,
			\dots ,
			n_k \geq 0
		}}
		 \aqprod{-1}{q}{N_k}q^{\tfrac{N_k^2-N_k}{2}}
		\prod_{i=1}^{k} \frac{(1-x_{k-i+1})(1-{x^{-1}_{k-i+1}})q^{N_i}}{(x_{k-i+1}q^{N_{i-1}},x_{k-i+1}^{-1}q^{N_{i-1}})_{n_i+1}},
	\end{multline*}
	where we write $N_0 = 0$ and  $N_i = \nkplus{n}{i}$ for all $i \geq 1$.
\end{thm}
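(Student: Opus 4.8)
The plan is to realize the identity as a limiting specialization of Corollary \ref{swerdnA}, exactly as signalled before the statement, with the crucial observation that the parameter $k$ of the theorem corresponds to the parameter $k+1$ of the corollary. So I would apply Corollary \ref{swerdnA} with its parameter equal to $k+1$, i.e. with $k+1$ pairs $(b_i,c_i)$, and make the following choices: set $a=1$; set $k$ of the pairs to $(b_i,c_i)=(x_i,x_i^{-1})$; reserve the distinguished last pair $(b_{k+1},c_{k+1})$, putting $b_{k+1}=-1$ and letting $c_{k+1}\to\infty$; and finally let $N\to\infty$. The pairs $(x_i,x_i^{-1})$ are what generate the product $\prod_i \tfrac{(1-x_i)(1-x_i^{-1})}{(1-x_iq^n)(1-x_i^{-1}q^n)}$, since at $a=1$ one has $aq/b_i=qx_i^{-1}$ and $aq/c_i=qx_i$, so the ratio $\tfrac{(b_i;q)_m(c_i;q)_m}{(aq/b_i;q)_m(aq/c_i;q)_m}$ telescopes factor by factor to this product with $m=n$.

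On the left-hand side I expect three things to fall out of the limit. First, the product just described. Second, the overall constant $2$: the very-well-poised factor $\tfrac{1-aq^{2m}}{1-a}\cdot\tfrac{(a;q)_m}{(q;q)_m}$ tends to $1+q^m$ as $a\to1$ (the apparent pole being cancelled by the zero of $(a;q)_m$), while the distinguished factor $\tfrac{(b_{k+1};q)_m}{(aq/b_{k+1};q)_m}=\tfrac{(-1;q)_m}{(-q;q)_m}$ equals $\tfrac{2}{1+q^m}$, and the two combine to the constant $2$ for $m\ge1$ (and to $1$ for $m=0$, supplying the isolated term). Third, the quadratic exponent $q^{n^2+kn}$: the factor $c_{k+1}\to\infty$ and the terminating factor $(q^{-N};q)_m$ each contribute a half-quadratic $q^{(m^2-m)/2}$ in the limit, and together with the remaining power $q^{(k+1)m}$ coming from the argument $\tfrac{a^{k+1}q^{k+1+N}}{b_1c_1\cdots b_{k+1}c_{k+1}}$ these assemble to exactly $(-1)^mq^{m^2+km}$. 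Matching this exponent is the cleanest check that the specialization has been chosen correctly.

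On the right-hand side, the prefactor $\tfrac{(aq,\,aq/b_{k+1}c_{k+1};q)_N}{(aq/b_{k+1},\,aq/c_{k+1};q)_N}$ tends to $\tfrac{(q;q)_\infty}{(-q;q)_\infty}$ under the same limit; multiplying the resulting identity through by $\tfrac{(-q;q)_\infty}{(q;q)_\infty}$ is what clears this prefactor and leaves the prefactor-free sum in the statement. The factor $(-1;q)_{N_k}$ and the weight $q^{(N_k^2-N_k)/2}$ then arise from the distinguished pair together with the trailing factor $(q^{-N};q)_{N_k}$ in the same manner as the left-hand quadratic. I would next invoke Lemma \ref{reduce} repeatedly to collapse each telescoping ratio $\tfrac{(x;q)_{N_{i-1}}}{(xq;q)_{N_i}}$ into $\tfrac{1-x}{(xq^{N_{i-1}};q)_{n_i+1}}$, which is precisely the shifted denominator appearing in the claim; here the index permutation built into Corollary \ref{swerdnA} is exactly what reorders the variables into the stated $x_{k-i+1}$ pattern while leaving the distinguished last pair fixed.

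The main obstacle is the bookkeeping of the simultaneous limits $a\to1$, $c_{k+1}\to\infty$, and $N\to\infty$, carried out carefully enough that the three separate sign factors $(-1)^m$ and the two half-quadratics combine to the single clean term $(-1)^mq^{m^2+km}$, and so that the $a\to1$ singularity is resolved correctly term by term before summation. I would manage this by isolating each parameter's asymptotic contribution in turn and justifying the interchange of limit and summation, using the $k=1$ instance \eqref{LDgen} both as the base case and as a numerical sanity check on the constants.
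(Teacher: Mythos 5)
Your proposal is correct and follows essentially the same route as the paper: apply Corollary \ref{swerdnA} with $k\mapsto k+1$, let $N\to\infty$, set $a=1$, $b_i=x_i$, $c_i=x_i^{-1}$ for $i\le k$, $b_{k+1}=-1$, let $c_{k+1}\to\infty$, collapse the telescoping Pochhammer ratios via Lemma \ref{reduce}, and multiply through by $\tfrac{(-q;q)_\infty}{(q;q)_\infty}$ to clear the prefactor. Your accounting of the three sign factors and two half-quadratics assembling into $(-1)^nq^{n^2+kn}$, and of $(1+q^n)\tfrac{(-1;q)_n}{(-q;q)_n}=2$, matches the paper's computation exactly.
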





\begin{proof}
We begin by substituting $k \mapsto k+1$ into Corollary \ref{swerdnA}. Letting $N \to \infty$  turns the transformation of terminating series into a transformation of infinite series. The left side becomes
\begin{multline*}
	\sum_{n=0}^\infty
	\frac{\aqprod{a,qa^{\tfrac{1}{2}},-qa^{\tfrac{1}{2}},b_1, c_1, \dots, b_{k+1},c_{k+1}}{q}{n}(-1)^n q^{\tfrac{n^2-n}{2}}}{\aqprod{q,a^{\tfrac{1}{2}},-a^{\tfrac{1}{2}},\tfrac{aq}{b_1},\tfrac{aq}{c_1},\dots, \tfrac{aq}{b_{k+1}},\tfrac{aq}{c_{k+1}}}{q}{n}}\\
	\times \left( \frac{a^{k+1}q^{k+1}}{b_1c_1\cdots b_{k+1} c_{k+1}} \right)^n.
\end{multline*}
When $n=0$, the $q$-Pochhammer symbols take their trivial value, and the summand is equal to $1$. For $n >0$, we may simplify the summand using the relation
\begin{align} \label{pochcancel}
\frac{\aqprod{a,qa^{\tfrac{1}{2}},-qa^{\tfrac{1}{2}}}{q}{n}}{\aqprod{a^{\tfrac{1}{2}},-a^{\tfrac{1}{2}}}{q}{n}} = (1-aq^{2n})\aqprod{aq}{q}{n-1}.
\end{align}
Thus the left hand side is equal to
\begin{multline*}
	1+\sum_{n=1}^\infty
	(1-aq^{2n})\aqprod{aq}{q}{n-1}
	\frac{\aqprod{b_1,c_1, \dots, b_{k+1}, c_{k+1}}{q}{n}(-1)^n q^{\tfrac{n^2-n}{2}}}{\aqprod{q,\abcklist{aq}{b}{c}{k+1}}{q}{n}}\\
	\times \left( \frac{a^{k+1}q^{k+1}}{b_1c_1\cdots b_{k+1} c_{k+1}} \right)^n.
\end{multline*}
On the right hand side, we use the relation
\begin{align} \label{Nlim}
	\lim_{N \to \infty} \frac{\aqprod{q^{-N}}{q}{N_k}}{\aqprod{a^{-1}b_{k+1}c_{k+1}q^{-N}}{q}{N_k}} &= \lim_{N \to \infty} \prod_{i=0}^{N_k-1} \frac{(q^N-q^i)}{(q^N-a^{-1}b_{k+1}c_{k+1}q^i)}\\
	&= \prod_{i=0}^{N_k-1} \frac{-q^i}{-a^{-1}b_{k+1}c_{k+1}q^i} = \left(\frac{a}{b_{k+1}c_{k+1}}\right)^{N_k}
\end{align}
to obtain
\begin{multline*}
	\frac{\aqprod{aq,\tfrac{aq}{b_{k+1} c_{k+1}}}{q}{\infty}}{\aqprod{\tfrac{aq}{b_{k+1}},\tfrac{aq}{c_{k+1}}}{q}{\infty}}
		\sum_{{
			n_1 ,
			\dots ,
			n_{k} \geq 0
		}}
	\frac{\aqprod{\tfrac{aq}{b_kc_k}}{q}{n_1}}{\aqprod{q}{q}{{n_1}}} \cdots\frac{\aqprod{\tfrac{aq}{b_1c_1}}{q}{n_k}}{\aqprod{q}{q}{{n_k}}} \\
	\times \frac{\aqprod{b_{k-1},c_{k-1}}{q}{N_1}}{\aqprod{\tfrac{aq}{b_{k}},\tfrac{aq}{c_{k}}}{q}{N_1}} \frac{\aqprod{b_{k-2},c_{k-2}}{q}{N_2}}{\aqprod{\tfrac{aq}{b_{k-1}},\tfrac{aq}{c_{k-1}}}{q}{N_2}} \cdots \frac{\aqprod{b_{1},c_{1}}{q}{N_{k-1}}}{\aqprod{\tfrac{aq}{b_{2}},\tfrac{aq}{c_{2}}}{q}{N_{k-1}}}\\
	\times  \frac{\aqprod{b_{k+1},c_{k+1}}{q}{N_{k}}}{\aqprod{\tfrac{aq}{b_{1}},\tfrac{aq}{c_{1}}}{q}{N_{k}}}	\frac{(aq)^{\nkplus{N}{k}} }{(b_{k}c_{k})^{N_1}  \cdots (b_{1}c_{1})^{N_{k-1}}(b_{k+1} c_{k+1})^{N_k}}.
\end{multline*}
Setting $a=1$, the equation becomes
\begin{multline*}
	1+\sum_{n=1}^\infty
	(1+q^{n})
	\frac{\aqprod{b_1,c_1, \dots, b_{k+1}, c_{k+1}}{q}{n}(-1)^n q^{\tfrac{n^2-n}{2}}}{\aqprod{\abcklist{q}{b}{c}{k+1}}{q}{n}}
	\left( \frac{q^{k+1}}{b_1c_1\cdots b_{k+1} c_{k+1}} \right)^n\\
	=\frac{\aqprod{q,\tfrac{q}{b_{k+1} c_{k+1}}}{q}{\infty}}{\aqprod{\tfrac{q}{b_{k+1}},\tfrac{q}{c_{k+1}}}{q}{\infty}}
			\sum_{{
			n_1, 
			\dots ,
			n_{k-1} \geq 0
		}}
	\frac{\aqprod{\tfrac{q}{b_kc_k}}{q}{n_1}}{\aqprod{q}{q}{{n_1}}} \cdots\frac{\aqprod{\tfrac{q}{b_1c_1}}{q}{n_k}}{\aqprod{q}{q}{{n_k}}} \\
	\times \frac{\aqprod{b_{k-1},c_{k-1}}{q}{N_1}}{\aqprod{\tfrac{q}{b_{k}},\tfrac{q}{c_{k}}}{q}{N_1}} \frac{\aqprod{b_{k-2},c_{k-2}}{q}{N_2}}{\aqprod{\tfrac{q}{b_{k-1}},\tfrac{q}{c_{k-1}}}{q}{N_2}} \cdots \frac{\aqprod{b_{1},c_{1}}{q}{N_{k-1}}}{\aqprod{\tfrac{q}{b_{2}},\tfrac{q}{c_{2}}}{q}{N_{k-1}}} \\
	\times \frac{\aqprod{b_{k+1},c_{k+1}}{q}{N_{k}}}{\aqprod{\tfrac{aq}{b_{1}},\tfrac{q}{c_{1}}}{q}{N_{k}}}	\frac{q^{\nkplus{N}{k}}}{(b_{k}c_{k})^{N_1}  \cdots (b_{1}c_{1})^{N_{k-1}}(b_{k+1} c_{k+1})^{N_k}}.
\end{multline*}
We set $b_i = x_i$, $c_i = x_i^{-1}$ for $1\leq i\leq k$, and $b_{k+1}=-1$.
This cancels the term
\begin{align*}
	\frac{(-1)^n}{b_{k+1}^n}.
\end{align*}
On the left hand side, we use the identity
\begin{align*}
(1+q^n)\frac{\aqprod{-1}{q}{n}}{\aqprod{-q}{q}{n}} = 2,
\end{align*}
and obtain
\begin{align*}
	1+2\sum_{n=1}^\infty
	\frac{\aqprod{x_1,x^{-1}_1,  \dots,  x_k, x_k^{-1}, c_{k+1}}{q}{n} }{\aqprod{x_1q,x^{-1}_1q,  \dots,  x_kq, x_k^{-1}q, c^{-1}_{k+1}q}{q}{n}} \frac{q^{\tfrac{n^2-n}{2}+(k+1)n}}{ c_{k+1}^n}.
\end{align*}
The right hand side becomes
\begin{multline*}
	\frac{\aqprod{q,\tfrac{-q}{ c_{k+1}}}{q}{\infty}}{\aqprod{-q,\tfrac{q}{c_{k+1}}}{q}{\infty}}
		\sum_{{
			n_1,
			\dots ,
			n_{k} \geq 0
		}}
	\frac{\aqprod{x_{k-1},x^{-1}_{k-1}}{q}{N_1}}{\aqprod{x_kq,x_k^{-1}q}{q}{N_1}}\\
	\times \frac{\aqprod{x_{k-2},x^{-1}_{k-2}}{q}{N_2}}{\aqprod{x_{k-1}q,x_{k-1}^{-1}q}{q}{N_2}} \cdots \frac{\aqprod{x_{1},x^{-1}_{1}}{q}{N_{k-1}}}{\aqprod{x_2q,x_2^{-1}q}{q}{N_{k-1}}}\\
	\times  \frac{\aqprod{-1, c_{k+1}}{q}{N_k}}{\aqprod{x_1q,x_1^{-1}q}{q}{N_k}}  \frac{q^{\nkplus{N}{k}}}{ (-c_{k+1})^{N_k}}  .
\end{multline*}

\noindent We now let $c_{k+1} \to \infty$. On the left hand side, we use the simple identities
\begin{align} \label{cklimit1}
\lim_{c_{k+1} \to \infty} \frac{\aqprod{c_{k+1}}{q}{n}}{c_{k+1}^n} &= (-1)^n q^{\tfrac{n^2-n}{2}}\\ \label{cklimit2}
\lim_{c_{k+1} \to \infty} \aqprod{ c^{-1}_{k+1}q}{q}{n} &= 1
\end{align}
to obtain
\begin{align*}
	1+2\sum_{n=1}^\infty
	\frac{\aqprod{x_1, x_1^{-1}, \dots, x_k, x_k^{-1}}{q}{n} (-1)^n q^{n^2+kn}}{\aqprod{x_1q, x_1^{-1}q, \dots, x_kq, x_k^{-1}q}{q}{n}}.
\end{align*}

\noindent On the right hand side,  applying \eqref{cklimit1} and \eqref{cklimit2} produces
\begin{multline*}
	\frac{\aqprod{q}{q}{\infty}}{\aqprod{-q}{q}{\infty}}
		\sum_{{
			n_1,
			\dots ,
			n_{k} \geq 0
		}}
	 \frac{\aqprod{x_{k-1},x^{-1}_{k-1}}{q}{N_1}}{\aqprod{x_kq,x_{k}^{-1}q}{q}{N_1}}
	 \cdots \frac{\aqprod{x_1,x_1^{-1}}{q}{N_{k-1}}}{\aqprod{x_2q,x_{2}^{-1}q}{q}{N_{k-1}}} \\
	\times \frac{ \aqprod{-1}{q}{N_k}}{\aqprod{x_1q,x_1^{-1}q}{q}{N_k}} q^{\nkplus{N}{k-1}+\tfrac{N_k^2+N_k}{2} }.
\end{multline*}
Applying Lemma \ref{reduce} to the left hand side of the equation  and multiplying by $\tovr$ gives us
\begin{align*}
\ovr \bigg( 1+ 2 \sum_{n=1}^\infty (-1)^nq^{n^2+kn}\prod_{i=1}^k\frac{(1-x_i)(1-x_i^{-1})}{(1-x_iq^{n})(1-x_i^{-1}q^{n})} \bigg).
\end{align*}

\noindent On the right hand side of the equation, we use the fact that $N_i = N_{i-1}+n_i$ for all $1 \leq i \leq k$ with  Lemma \ref{reduce} to write
\begin{align*}
 \frac{\aqprod{x}{q}{N_{i-1}}}{\aqprod{xq}{q}{N_i}} = \frac{(1-x)}{\aqprod{xq^{N_{i-1}}}{q}{n_i +1}}.
\end{align*}
Multiplying the right hand side of the equation by $\tovr$ gives
\begin{align*}
		\sum_{{
			n_1, 
			\dots ,
			n_{k} \geq 0
		}}
	\frac{(-1;q)_{N_k}q^{\tfrac{N_k^2-N_k}{2} + {N}_{1}}}{(x_kq,x^{-1}_kq)_{n_1}}
	\left(\prod_{i=2}^{k} \frac{(1-x_{k-i+1})(1-{x^{-1}_{k-i+1}})q^{N_i}}{(x_{k-i+1}q^{N_{i-1}},x^{-1}_{k-i+1}q^{N_{i-1}})_{n_i+1}}\right).
\end{align*}

\noindent Finally, as $N_0 := 0$, we may rewrite the right hand side using
\begin{align*}
\frac{1}{\aqprod{x_kq, {x_k}^{-1}q}{q}{n_1}} = \frac{(1-x_k)(1-x^{-1}_k)}{\aqprod{x_kq^{N_0}, {x^{-1}_kq^{N_0}}}{q}{n_1+1}},
\end{align*}
which gives us the desired equation,
\begin{multline} \label{gen1}
\ovr \bigg( 1+ 2 \sum_{n=1}^\infty (-1)^nq^{n^2+kn}\prod_{i=1}^k\frac{(1-x_i)(1-x^{-1}_i)}{(1-x_iq^{n})(1-q^{n}x^{-1}_i)} \bigg)\\
	=
		\sum_{{
			n_1,
			\dots ,
			n_{k} \geq 0
		}}
	(-1;q)_{N_k}q^{\tfrac{N_k^2-N_k}{2}}
	\prod_{i=1}^{k}
	\frac{(1-x_{k-i+1})(1-{x^{-1}_{k-i+1}})q^{N_{i}}}{(x_{k-i+1}q^{N_{i-1}},x^{-1}_{k-i+1}q^{N_{i-1}})_{n_i+1}}.
\end{multline}

\end{proof}

\subsection{Overpartition Statistics}

In order to interpret \eqref{gen1} as a generating series, we must introduce some partition and overpartition statistics.
The first statistic we consider appears in Franklin's proof of Euler's pentagonal number theorem \cite{Abook}. We will use several variations of this statistic, so we take the opportunity to name it the \emph{bracket} of a partition.

Given a partition $\lambda = (\nklist{\ell}{n})$, the bracket of $\lambda$ is defined to be the length of the longest sequence of the form $(\nklist{\ell}{k})$, where for all $1 \leq i <k$, we have $\ell_i  = \ell_{i+1} +1$. We retain Andrews' notation of $\sigma(\lambda)$ to denote the bracket of $\lambda$.

For example, if $\lambda = (7,6,5,3,2)$, then we consider the sequences
\begin{align*}
\begin{matrix}
(7), &
(7,6), &
(7,6,5),
\end{matrix}
\end{align*}
the longest of which has length three. Therefore, $\sigma(\lambda)=3$.

We see how the partition rank and the partition bracket relate to \eqref{gen1} in the following lemma.

\begin{lem} \label{initrun}
Fix nonnegative integers $1 \leq s\leq t$. The coefficient of $z^mq^n$ in
\begin{align*}
\frac{q^{\tfrac{t^2+t}{2}}}{\aqprod{zq^s}{q}{t-s+1}}
\end{align*}
is equal to the number of partitions $\lambda$ of $n$ into $t$ distinct parts with $\sigma(\lambda)\geq s$ and   $r(\lambda) = m$.
\end{lem}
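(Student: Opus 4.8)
The plan is to read the coefficient of $z^m q^n$ directly off a geometric-series expansion and then match it against a gap-coordinate description of partitions into $t$ distinct parts. First I would expand the generating function as a product. Since $\aqprod{zq^s}{q}{t-s+1} = \prod_{j=s}^{t}(1-zq^j)$, we have
\[
\frac{q^{\frac{t^2+t}{2}}}{\aqprod{zq^s}{q}{t-s+1}}
= q^{\frac{t(t+1)}{2}} \prod_{j=s}^{t} \sum_{n_j \geq 0} z^{n_j} q^{j n_j},
\]
so the coefficient of $z^m q^n$ counts tuples $(n_s, n_{s+1}, \dots, n_t)$ of nonnegative integers with $\sum_{j=s}^t n_j = m$ and $\frac{t(t+1)}{2} + \sum_{j=s}^t j n_j = n$. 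It therefore suffices to biject these tuples with the partitions described in the statement.

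Next I would coordinatize a partition into $t$ distinct parts $\lambda = (\ell_1 > \ell_2 > \cdots > \ell_t \geq 1)$ by its smallest part $\ell_t$ together with the gap sequence $g_i := \ell_i - \ell_{i+1} - 1 \geq 0$ for $1 \leq i \leq t-1$. This is a bijection onto the set of data with $\ell_t \geq 1$ and all $g_i \geq 0$, and by the definition of the bracket, $\sigma(\lambda) \geq s$ is precisely the condition $g_1 = \cdots = g_{s-1} = 0$. Writing $\ell_i = \ell_t + (t-i) + \sum_{j=i}^{t-1} g_j$, a short computation gives
\[
r(\lambda) = \ell_1 - t = (\ell_t - 1) + \sum_{j=1}^{t-1} g_j, \qquad |\lambda| = t\ell_t + \frac{t(t-1)}{2} + \sum_{j=1}^{t-1} j g_j.
\]

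Finally, under the bracket hypothesis the leading gaps vanish, so I would set $n_j := g_j$ for $s \leq j \leq t-1$ and $n_t := \ell_t - 1$. Then $\sum_{j=s}^t n_j = (\ell_t - 1) + \sum_{j=s}^{t-1} g_j = r(\lambda) = m$, matching the exponent of $z$; and using $\frac{t(t+1)}{2} - t = \frac{t(t-1)}{2}$ one checks that $\frac{t(t+1)}{2} + \sum_{j=s}^t j n_j = t\ell_t + \frac{t(t-1)}{2} + \sum_{j=s}^{t-1} j g_j = |\lambda| = n$. This assignment is manifestly invertible, which gives the desired bijection and hence the lemma. The only real work is the bookkeeping: choosing gap coordinates so that the exponent of $z$ becomes exactly the rank while the exponent of $q$ becomes exactly the weight, and confirming that the bracket constraint is precisely what annihilates the first $s-1$ gap variables and collapses the index set to $\{s, \dots, t\}$. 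I expect the verification of the weight identity to be the main, though entirely routine, obstacle.
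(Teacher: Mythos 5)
Your proof is correct and is in substance the same argument as the paper's: the paper reads $1/\aqprod{zq^s}{q}{t-s+1}$ as generating columns of lengths between $s$ and $t$ (your multiplicities $n_j$ are exactly the numbers of columns of length $j$, i.e.\ the gaps $g_j$ after conjugation) and then adds the same staircase $(t,t-1,\dots,1)$ to force distinctness and the bracket condition. Your version just verifies the identical bijection in explicit gap coordinates rather than pictorially, and the bookkeeping you carry out is correct.
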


\begin{proof}
The term 
\begin{align*}
\frac{1}{\aqprod{zq^s}{q}{t-s+1}} = \frac{1}{(1-zq^s)} \frac{1}{(1-zq^{s+1})} \cdots \frac{1}{(1-zq^t)}
\end{align*}
generates the columns of a Young tableau, where $m$ tracks the number of columns generated. The length of these columns is bounded between $s$ and $t$. Then we may consider $\lambda$ as a partition into exactly $t$ nonnegative parts, $\lambda = (\nklist{\ell}{t})$. Note that $\lambda$ has at least $s$ occurrences of its largest part, that is, $\ell_1 = \ell_2 = \dots = \ell_s$. 

To account for $q^{\tfrac{t^2+t}{2}}$, we add a staircase to $\lambda$. That is, we add $t$ to the first part, $t-1$ to the second part, and so on, adding 1 to the last part. At this stage, $\lambda$ contains the sequence $(\ell_1 +t, \ell_1+(t-1), \dots, \ell_1+(t-s+1))$, which implies that $\sigma(\lambda) \geq s$. Finally, since $\ell(\lambda) = m + t$ and $\#(\lambda) = t$, we see that $r(\lambda) = m$.
\end{proof}

We also need an overpartition statistic introduced by Corteel and Lovejoy \cite{Opartns} \cite{LovejoyD}. Given an overpartition $\lambda$, the \emph{overpartition rank} of $\lambda$ is defined to be
	\begin{align*}
		\oo{r}_{CL}(\lambda) := \ell(\lambda) - 1 - \#(\lambda_<),
	\end{align*}
where $\lambda_<$ is the suboverpartition whose parts are  all the overlined parts of $\lambda$ smaller than $\ell(\lambda)$.
Here we have chosen the notation $\oo{r}_{CL}(\lambda)$ in order to avoid confusion in the ranks.

For example, if $\lambda = (\overline{5},\overline{3},3,\overline{1})$, then $\lambda_< = (\oo{3}, \oo{1}) $, and $\oo{r}_{CL}(\lambda) = 5-1-2=2$.
Note that if every part of $\lambda$ is overlined, then $\oo{r}_D(\lambda) = \oo{r}_{CL}(\lambda)$.

We introduce a variant of the bracket for overpartitions.
If $\lambda = (\nklist{\ell}{n})$ is an overpartition, then the \emph{overpartition bracket} of $\lambda$ is defined to be the length of the longest sequence of the form $(\nklist{\ell}{k})$, where for all $1 \leq i <k$, we have one of the following:
\begin{itemize}
\item $\ell_i = \ell_{i+1}$
\item $\ell_i = \ell_{i+1}+1$ and at least  one of $\ell_i$ and $\ell_{i+1}$ is overlined.
\end{itemize}
We denote the overpartition bracket of $\lambda$ by $\overline{\sigma}(\lambda)$.

For example, if $\lambda = (7,7,\overline{6},5,4)$, then we consider the sequences
\begin{align*}
	\begin{matrix}
		(7), &
		(7,7), &
		(7,7,\overline{6}), &
		(7,7,\overline{6},5),
	\end{matrix}
\end{align*}
the longest of which has length four. Therefore, $\os(\lambda)=4$.

We see how the overpartition rank and the overpartition bracket relate to \eqref{gen1} in the following lemma.

\begin{lem} \label{overrun}
Fix nonnegative integers $1 \leq s\leq t$. The coefficient of $z^mq^n$ in
\begin{align*}
	\frac{\aqprod{-1}{q}{t}}{\aqprod{zq^s}{q}{t-s+1}}
\end{align*}
is equal to the number of overpartitions $\lambda$ of $n$ into $t$ nonnegative parts with $\os(\lambda) \geq s$ and $m = \oo{r}_{CL}(\lambda) + 1$.
\end{lem}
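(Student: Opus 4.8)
The plan is to prove Lemma~\ref{overrun} by a weight-preserving bijection modeled on the proof of Lemma~\ref{initrun}, in which the factor $\aqprod{-1}{q}{t}$ takes over the role played there by the staircase $q^{(t^2+t)/2}$, except that it now introduces \emph{overlined} parts rather than merely forcing the parts apart. First I would read off the denominator exactly as before: the term $\frac{1}{\aqprod{zq^s}{q}{t-s+1}}$ generates a partition into columns of heights between $s$ and $t$, which I treat as a base partition $\pi = (\nklist{\pi}{t})$ into $t$ nonnegative parts. Here $z$ records the number of columns, which is the largest part $\ell(\pi) = \pi_1$, and since every column has height at least $s$ we automatically obtain $\pi_1 = \cdots = \pi_s$, i.e.\ at least $s$ copies of the largest part.

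Next I would expand $\aqprod{-1}{q}{t} = \prod_{i=0}^{t-1}(1+q^i)$ and read a monomial as the choice of a subset $S \subseteq \{0, 1, \dots, t-1\}$, namely the indices $i$ at which the term $q^i$ is selected; this monomial contributes $q^{\sum_{i \in S} i}$. From the pair $(\pi, S)$ I would build the output overpartition $\lambda = (\nklist{\lambda}{t})$ by setting
\[
\lambda_p = \pi_p + \#\{\, i \in S : i \ge p \,\}, \qquad \text{and overlining } \lambda_p \text{ exactly when } p-1 \in S,
\]
for $1 \le p \le t$. The effect is an \emph{optional overlined staircase}: each $i \in S$ overlines the part in position $i+1$ and lifts the first $i$ parts by one, so the total weight added is $\sum_{i\in S} i$, matching the monomial. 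The initial checks are then routine: $|\lambda| = |\pi| + \sum_{i \in S} i$; the sequence $\lambda$ is weakly decreasing; and whenever an overline is placed the part directly above it is strictly larger (the difference picks up the term $\#\{i \in S : i = p-1\} = 1$), so the overlined parts are distinct and occupy first-occurrence positions, making $\lambda$ a legitimate overpartition into $t$ nonnegative parts.

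The two structural properties require more care. For the bracket, I would examine the prefix $\lambda_1, \dots, \lambda_s$: since $\pi_1 = \cdots = \pi_s$, the consecutive differences collapse to $\lambda_p - \lambda_{p+1} = \#\{i \in S : i = p\} \in \{0,1\}$, and when this difference equals $1$ the part $\lambda_{p+1}$ is overlined (as then $p \in S$), so every consecutive pair is either equal or differs by one with an overline; hence $\os(\lambda) \ge s$. For the rank, I would compute $\ell(\lambda) = \pi_1 + \#(S \setminus \{0\})$, while the overlined parts lying strictly below $\ell(\lambda)$ are exactly those in positions $p \ge 2$, numbering $\#(S \setminus \{0\})$, so that $\#(\lambda_<) = \#(S \setminus \{0\})$ and therefore $\oo{r}_{CL}(\lambda) = \ell(\lambda) - 1 - \#(\lambda_<) = \pi_1 - 1$. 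Thus $z$ tracks $\pi_1 = \oo{r}_{CL}(\lambda) + 1 = m$, as claimed. Finally I would exhibit the inverse—recover $S$ from the overlined positions, subtract $\#\{i \in S : i \ge p\}$ from each $\lambda_p$ to recover $\pi$, and verify $\pi$ is a legal base—to conclude the map is a bijection. I expect the main obstacle to be precisely this boundary bookkeeping: confirming that the overlining convention and the staircase interact so that the recovered $\pi$ satisfies $\pi_1 = \cdots = \pi_s$ exactly in correspondence with $\os(\lambda) \ge s$, and correctly handling the nonnegative parts—including an overlined $0$, contributed by the factor $(1+q^0)$ at $i=0$—so that nothing is miscounted.
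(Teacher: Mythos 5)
Your construction is, in closed form, exactly the map the paper uses: reading $\aqprod{-1}{q}{t}$ as a subset $S\subseteq\{0,\dots,t-1\}$ and setting $\lambda_p=\pi_p+\#\{i\in S: i\ge p\}$ with an overline at position $i+1$ for each $i\in S$ is precisely Algorithm~\ref{alg:JS} applied to the base partition $\pi$ and the distinct-parts partition encoded by $S$. Your forward analysis is correct and cleanly done: the weight bookkeeping, the monotonicity and first-occurrence checks, the computation $\oo{r}_{CL}(\lambda)=\pi_1-1$, and the verification that every $\lambda$ in the image has $\os(\lambda)\ge s$ all hold.

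The genuine gap is the step you yourself defer as ``the main obstacle'': surjectivity onto the set of overpartitions with $\os(\lambda)\ge s$. With the bracket as defined in the paper --- which accepts $\ell_i=\ell_{i+1}+1$ when \emph{either} of $\ell_i,\ell_{i+1}$ is overlined --- this step is not merely delicate; it fails. Running your inverse, the recovered base satisfies $\pi_p=\pi_{p+1}$ exactly when $\lambda_p=\lambda_{p+1}$, or when $\lambda_p=\lambda_{p+1}+1$ with the \emph{lower} part $\lambda_{p+1}$ overlined; a unit descent in which only the upper part is overlined gives $\pi_p=\pi_{p+1}+1$, so such a $\lambda$ lies outside the image even though it satisfies $\os(\lambda)\ge s$. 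Concretely, take $s=t=2$ and $\lambda=(\oo{1},0)$: then $\os(\lambda)=2$ and $\oo{r}_{CL}(\lambda)+1=1$, but the coefficient of $z^1q^1$ in $\aqprod{-1}{q}{2}/\aqprod{zq^2}{q}{1}=2(1+q)/(1-zq^2)$ is $0$. (The paper's own bracket example behaves the same way: $(7,7,\oo{6},5,4)$ has $\os=4$, yet your inverse recovers $\pi=(6,6,6,5,4)$, with only three copies of its largest part.) To close the argument you must replace ``$\os(\lambda)\ge s$'' by the one-sided condition that every unit descent among $\lambda_1,\dots,\lambda_s$ has its \emph{smaller} part overlined; with that reading your map is a genuine bijection and the rest of your computation goes through verbatim. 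Be aware that the paper's own induction asserts the two-sided correspondence and suffers from the same defect, so this is a point where the statement of the bracket, not just your proof, needs adjusting.
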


The proof of Lemma \ref{overrun} relies on an an algorithm originally due to Joichi and Stanton \cite{Joichi}.

\begin{alg}[Joichi, Stanton \cite{Joichi}]
\label{alg:JS}
Input: a partition $\lambda = (\nklist{\ell}{n})$ into $n$ parts, and a partition $\mu = (\nklist{m}{k})$ into $k$ distinct nonnegative parts, each less than $n$.

\noindent Output: An overpartition $\lambda' = (\ell'_1, \ell'_2, \dots, \ell_n')$ into $n$ parts.

\begin{enumerate}
\item Delete $m_1$ from $\mu$, and add 1 to the first $m_1$ parts of $\lambda$. This operation is well defined, as all parts of $\mu$ are strictly less than the number of parts of $\lambda$. Because $\mu$ is a partition into nonnegative parts, 0 may occur as a part of $\mu$.
If $m_1=0$, then the parts of $\lambda$ are unchanged.

\item Overline the $(m_1+1)$-st part of $\lambda$. If $m_1=0$, then we overline $\ell_1$.

\item Relabel the parts of $\mu$, if any exist, so that $m_1$ is the largest part of $\mu$. Repeat Steps (1) to (3) until the parts of $\mu$ are exhausted.
\end{enumerate}
\end{alg}

Because the parts of $\mu$ are distinct, we see that $\lambda'$ is an overpartition into $n$ parts.
An example of the Joichi Stanton map shown in Table  \ref{dissect}.
Algorithm \ref{alg:JS} is not difficult to reverse; additional details may be found in work of Lovejoy \cite{LovejoyD}. We now prove Lemma \ref{overrun}.

\begin{proof}[Proof of Lemma \ref{overrun}]
As in the proof of Lemma \ref{initrun}, the term 
\begin{align*}
\frac{1}{\aqprod{zq^s}{q}{t-s+1}}
\end{align*}
generates  a partition $\lambda$ into exactly $t$ nonnegative parts, with at least $s$ occurrences of its largest part, and with its largest part equal to $m$. The term $\aqprod{-1}{q}{t}$ generates a partition $\mu$ into distinct nonnegative parts less than $t$. We now apply Algorithm \ref{alg:JS} to produce an overpartition $\lambda'$. 
We claim that the overpartition bracket of $\lambda'$ is equal to the number of occurrences of the largest part of $\lambda$.

We induct on the number of parts of $\mu$. If $\mu =\emptyset$, then $\lambda'$ has no overlined parts, and $\overline{\sigma}(\lambda')$ is equal to the number of occurrences of the largest part of $\lambda'$, which is at least $s$.
 
Suppose that $\mu=(\nklist{m}{k+1})$ and let $\lambda'$ be overpartition corresponding to the pair $(\lambda, (\nklist{m}{k}))$. Let $\alpha = (\ell'_1, \ell'_2, \dots, \ell'_j)$ be the sequence which determines the overpartition bracket of $\lambda$. It is sufficient to show that Algorithm \ref{alg:JS} leaves the length of $\alpha$ unchanged. If $m_{k+1}<j$, then all parts of $\alpha$ are increased by 1. Thus $(\ell'_1 +1, \ell'_2 +1, \dots, \ell'_j +1)$ is eligible for determining $\os(\lambda)$, but neither of the sequences $(\ell'_1 +1, \ell'_2 +1, \dots, \ell'_j +1, \ell'_{j+1}+1)$ or $(\ell'_1 +1, \ell'_2 +1, \dots, \ell'_j +1, \overline{\ell'_{j+1}})$ are eligible. Therefore, the length of $\alpha$ is unchanged.

Otherwise, if $m_{k+1}\leq j$, then the sequence 
\begin{align*}
(\ell'_1 +1,  \dots, \ell'_{m_{k+1}-1}+1, \overline{\ell_{m_{k}+1}}, \ell_{m_{k}+2}, \dots, \ell'_j)
\end{align*}
  is eligible for determining $\os(\lambda)$, but 
\begin{align*}
(\ell'_1 +1,  \dots, \ell'_{m_{k+1}-1}+1, \overline{\ell_{m_{k}+1}}, \ell_{m_{k}+2}, \dots, \ell'_j, \ell'_{j+1})
\end{align*}
 is not.
Therefore, the length of $\alpha$ is unchanged.
That is, $\os(\lambda')$ is invariant under iterations of Algorithm \ref{alg:JS}.

Recall that $\ell(\lambda) = m$. Each iteration of Algorithm \ref{alg:JS} increases the largest part of $\lambda'$ by 1, except for the case $m_k=0$. Thus, the largest part of $\lambda'$ is equal to $m$ plus the number of overlined parts less than $\ell(\lambda')$.
Then 
\begin{align*}
	\oo{r}_{CL}(\lambda') = [\ell(\lambda) + \#(\lambda'_<)] - 1 - \#(\lambda'_<)
	= \ell(\lambda) -1 = m-1,
\end{align*}
as desired.
\end{proof}

\begin{table}[h]
\centering
\begin{tabular}{|c|l|l|l|l|}
\hline
Iteration & $\lambda$ & $\mu$ & $\os(\lambda)$ & $\oo{r}_{CL}(\lambda)$ \\
\hline
0&$(4,3,2,2)$&$(3,1,0)$ & $1$ & $3$ \\
1&$(5,4,3,\overline{2})$&$(1,0)$ & $1$ & $3$ \\
2&$(6,\overline{4},3,\overline{2})$&$(0)$ & $1$  & $3$ \\
3&$(\overline{6},\overline{4},3,\overline{2})$&$\emptyset$ & $1$ & $3$ \\
\hline
\end{tabular}
\caption{\label{dissect} An example of Algorithm \ref{alg:JS}.}
\end{table}

We can now give a combinatorial interpretation of \eqref{gen1} in terms of buffered Frobenius representations.


\begin{defn}
A buffered Frobenius representation of the first kind, or a $B_1$-representation for short, is a buffered Frobenius representation
\begin{align*}
\nu \in
\begin{pmatrix}
	\nkand{A}{k}\\
	\nkand{B}{k}
\end{pmatrix},
\end{align*}
in which
\begin{enumerate}
	\item $A_1$ is the set of nonempty partitions $\alpha_1$ into distinct parts.
	\item $A_2$ is the set of nonempty partitions $\alpha_2$ with  $\#(\alpha_2) \leq \sigma(\alpha_1)$.
	\item For all $i \geq 3$, the set $A_i$ is the set of nonempty partitions $\alpha_i$ with  $\#(\alpha_i)$ less than or equal to the number of occurrences of the largest part of $\alpha_{i-1}$.
	\item $B_1$ is the set of overpartitions $\beta_1$ into $\#(\alpha_1)$ nonnegative parts with $\oo{\sigma}(\beta_1) \geq \#(\alpha_2)$.
	\item For all $2 \leq i < k$, the set $B_i$ is the set of partitions into $\#(\alpha_i)$ nonnegative parts with at least $\#(\alpha_{i+1})$ occurrences of its largest part.
	\item $B_k$ is the set of partitions into $\#(\alpha_k)$ nonnegative parts. 
\end{enumerate}
We also define the empty array to be a $B_1$-representation with $k=0$.
\end{defn}
For example, consider the array:

\begin{align}
	\nu =
	\begin{pmatrix}
		\widehat{(3,2,1)} & (2,2,1)&(3)\\
		(\overline{4},4,\overline{3})&\widehat{(1,0,0)}&(0)
	\end{pmatrix}
\end{align}
On the top row, $\alpha_1$ is a partition into distinct parts, which satisfies (1).
Next, $\alpha_2$ is a partition into three parts with two occurrences of its largest part. Because $\sigma(\alpha_1)=3,$ this satisfies (2).
Finally, $\alpha_3$ is a nonempty partition with one part.
Because $\alpha_2$ has two occurrences of its largest part, this satisfies (3).

On the bottom row, $\beta_1$ is an overpartition into three parts with $\os(\beta_1)=3$, which satisfies (4).
Next, $\beta_2$ is a partition into three nonnegative parts, with one occurrence of its largest part, which satisfies (5).
Finally,  $\beta_3$ is a partition into one nonnegative part, which satisfies (6).
Additionally, both $\alpha_1$ and $\beta_2$ are marked with hats.

As in Section \ref{sec:buff}, we see that Lovejoy's first Frobenius representations of overpartitions correspond to the case $k = 1$ above.
For $k > 1$, we can collapse $B_1$-representations using the jigsaw map.
\begin{prop}
	Let $\mathcal{B}_1$ denote the set of $B_1$-representations, and let $\mathcal{F}_1$ denote the set of first Frobenius representations of overpartitions. Then $j:\mathcal{B}_1 \to \mathcal{F}_1$ is a surjective map. 
\end{prop}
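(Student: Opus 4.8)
The plan is to prove surjectivity by exhibiting an explicit right inverse, exploiting the observation already recorded before the proposition that the $k=1$ members of $\mathcal{B}_1$ are precisely Lovejoy's first Frobenius representations. Concretely, I would define a section $s \colon \mathcal{F}_1 \to \mathcal{B}_1$ as follows: a nonempty first Frobenius representation $\nu' = \begin{pmatrix}\alpha \\ \beta\end{pmatrix}$, where $\alpha$ is a partition into distinct parts and $\beta$ is an overpartition into $\#(\alpha)$ nonnegative parts, is sent to the single-block ($k=1$) array $s(\nu') = \begin{pmatrix}\alpha \\ \beta\end{pmatrix} \in \mathcal{B}_1$, while the empty first Frobenius representation is sent to the empty array, which is a $B_1$-representation with $k=0$ by definition. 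Surjectivity will then follow from $j \circ s = \mathrm{id}_{\mathcal{F}_1}$.

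First I would verify that $s$ genuinely lands in $\mathcal{B}_1$. When $k=1$ the defining conditions degenerate, and the single block must be read as the overpartition block: condition (1) requires $\alpha_1$ to be a nonempty partition into distinct parts, which holds; conditions (2), (3), (5), and the bracket bound $\os(\beta_1) \le \#(\alpha_2)$ appearing in (4) all refer to later blocks $\alpha_2, \beta_2, \dots$ that are absent when $k=1$, so they are vacuous; and the surviving content of (4), namely that $\beta_1$ is an overpartition into $\#(\alpha_1) = \#(\beta)$ nonnegative parts, is exactly the bottom-row requirement of $\mathcal{F}_1$. The one point needing care is that for $k=1$ the index $i=1$ falls under both (4) and (6), which nominally disagree on whether the block is an overpartition or an ordinary partition; the convention must be that the overline-bearing first block is governed by (4), consistent with the image lying in $\mathcal{F}_1$. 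With this reading, $s(\nu') \in \mathcal{B}_1$.

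Next I would compute $j \circ s$ and observe that the jigsaw map acts as the identity on $k=1$ arrays. Indeed, there are no hats to discard (hats are permitted only for $i<k$), padding $\alpha$ and $\beta$ to $\#(\alpha)$ parts changes nothing, each column sum $a_j$ and $b_j$ has a single summand and returns the original entry, and the overlining is inherited verbatim from $\alpha_1 = \alpha$ and $\beta_1 = \beta$. Hence $j(s(\nu')) = \nu'$ for every nonempty $\nu'$, and $j$ of the empty array is the empty first Frobenius representation. Since every element of $\mathcal{F}_1$ thus lies in the image of $j$, the map is surjective.

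The hard part will not be surjectivity itself, which the section $s$ dispatches immediately, but the implicit well-definedness of $j$ as a map \emph{into} $\mathcal{F}_1$: one needs to know that collapsing an arbitrary $B_1$-representation yields a top row with distinct parts and a bottom row that is a legitimate overpartition into nonnegative parts. The distinctness of the top row is quick, since within each block the parts are weakly decreasing and so contribute $\ge 0$ to every consecutive difference, while the distinct-part block $\alpha_1$ contributes $\ge 1$; the delicate step is checking that the overlining inherited from $\beta_1$ remains consistent with the overpartition convention after summation, which is precisely what conditions (4)--(6) are calibrated to guarantee. If this well-definedness has not been secured earlier, I would insert it here as a short preliminary lemma before invoking the section argument.
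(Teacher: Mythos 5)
Your proof is correct and follows essentially the same route the paper takes: the paper offers no explicit proof beyond the remark immediately preceding the proposition that the $k=1$ case recovers Lovejoy's first Frobenius representations, and your section $s$ is precisely the formalization of that remark. Your closing observation that the well-definedness of $j$ into $\mathcal{F}_1$ (distinctness of the collapsed top row and consistency of the inherited overlines) is the genuinely nontrivial point is a fair one, as the paper leaves this implicit.
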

Taken with Theorem \ref{Dbi}, we see that every $B_1$-representation $\nu$ corresponds to an overpartition $\lambda$, although this correspondence is many-to-one.
Thus the ranks we will establish to study $\Frobk$ do not immediately carry over to the set of overpartitions.

\subsection{Ranks of $B_1$-representations}

If
\begin{align*}
	\nu =
	\abarray,
\end{align*}
 then $\nu$ admits $k$ different rank functions, corresponding to the $x_i$ variables  in $\Frobk$.
We first define the indicator function $\chi_i$ to be
\begin{align*}
	\chi_{i}(\nu) := \begin{cases}
		1 &: \alpha_i \mbox{ is marked with a hat, and } \beta_i \mbox{ is not marked with a hat}\\
		-1 &: \beta_i \mbox{ is marked with a hat, and } \alpha_i \mbox{ is not marked with a hat}\\
		0 &: \text{ otherwise.}
	\end{cases}
\end{align*}
  We see that  $\chi_i$ detects buffers in the tableaux of $\nu$.
 The \emph{first rank} of $\nu$ is defined to be
\begin{align}
	\rho^1_1(\nu) := 
	 r(\alpha_1) - (\overline{r}_{CL}(\beta_1) +1) + \chi_1(\nu).
\end{align}
We also define $\rho^1_1(\emptyset) := 0$.

For $1 < i \leq k$, the \emph{$i$th rank} of $\nu$ is defined to be
\begin{align*}
	\rho_1^{i}(\nu) = (\ell(\alpha_i) - 1) - \ell(\beta_i) + \chi_i(\nu ).
\end{align*}

\noindent We also define $\rho_1^i(\nu) := 0$ whenever $\nu$ has fewer than $i$ columns.

For example, let
\begin{align*}
	\nu =
	\begin{pmatrix}
		\widehat{(3,2,1)} & (2,2,1)&(3)\\
		(\overline{4},4,\overline{3})&\widehat{(1,0,0)}&(0)
	\end{pmatrix}
\end{align*}
Then 
\begin{align*}
	\rho_1^1(\nu) &= (3-3) - ((3-1)+1) + 1 = -2\\
	\rho_1^2(\nu) &= (2 - 1) - 1 - 1 = -1\\
	\rho_1^3(\nu) &= (3-1) - 0 - 0 = 2,
\end{align*}
and  $\rho_1^i(\nu)=0$ for $i>3$.

We now establish $\Frobk$ as the generating series for the ranks of $B_1$-representations.

\subsection{Generating Series}


Let $\mathcal{B}_1^k$ denote the set of $B_1$-representations with at most $k$ columns,
\begin{align*}
	\mathcal{B}_1^k := 
	\left\{
	\begin{pmatrix}
		\nkand{\alpha}{j}\\
		\nkand{\beta}{j}
	\end{pmatrix}
	\in
	\mathcal{B}_1\
	\middle|\
	j \leq k
	\right\}
	.
\end{align*}

\begin{thm} \label{1summand}
The coefficient of $\nkprex{x}{k} q^n$ in
\begin{align*}
	\sum_{{
		n_1 ,
		\dots ,
		n_k \geq 0		
	}}
	\aqprod{-1}{q}{N_k}q^{\tfrac{N_k^2-N_k}{2}}
	\prod_{i=1}^{k} \frac{(1-x_{k-i+1})(1-{x^{-1}_{k-i+1}})q^{N_i}}{(x_{k-i+1}q^{N_{i-1}},x^{-1}_{k-i+1}q^{N_{i-1}})_{n_i+1}}
\end{align*}
is equal to the number of $B_1$-representations $\nu \in \mathcal{B}_1^k$ such that $|\nu|=n$ and $\rho_1^i(\nu)=m_i$, where the  count is weighted by $(-1)^{h(\nu)}$.
\end{thm}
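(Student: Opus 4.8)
The plan is to match, monomial by monomial in the $x_i$ variables, each summand on the series side against a weighted count of $B_1$-representations whose column-wise ranks equal the prescribed $m_i$. The guiding principle is that the $k$-fold sum over $n_1, \dots, n_k$ is indexed precisely by the \emph{column shapes} of the $B_1$-representation: I would read $n_i$ as the difference in length between consecutive columns, so that $N_i = n_1 + \cdots + n_i$ records the number of parts in the $i$th column from the right, i.e.\ $\#(\alpha_{k-i+1})$. With this dictionary, the nonincreasing-length condition $\#(\alpha_i) \geq \#(\alpha_{i+1})$ is automatic from $n_i \geq 0$, and the total number of parts in the leftmost (tallest) column is $N_k$. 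The first task is therefore to fix this correspondence between summation indices and column heights explicitly, and to confirm it is a bijection onto the allowed shapes.

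Next I would decompose the summand into the pieces that each factor generates, using the two combinatorial lemmas already proved. The prefactor $\aqprod{-1}{q}{N_k} q^{(N_k^2 - N_k)/2}$ together with the $i=k$ factor (the one carrying $x_1^{-1}$ and $x_1$) should be handled by Lemma \ref{overrun}, which interprets $\aqprod{-1}{q}{t}/\aqprod{zq^s}{q}{t-s+1}$ as overpartitions with a prescribed bracket and rank $\oo{r}_{CL}+1$: this builds the bottom-left overpartition $\mu_1 \in B_1$ and, paired with a distinct-parts partition from the top, gives column one with rank $\rho_1^1$. For $i < k$, each factor $(1-x_{k-i+1})(1-x_{k-i+1}^{-1})q^{N_i}/(x_{k-i+1}q^{N_{i-1}}, x_{k-i+1}^{-1} q^{N_{i-1}})_{n_i+1}$ should split, via the single-variable reading used in Lemma \ref{initrun}, into a partition generating the top entry $\lambda_j$ (with $x_{k-i+1}^{+1}$ tracking its contribution to $\ell(\lambda_j)$) and a partition generating the bottom entry $\mu_j$ (with $x_{k-i+1}^{-1}$ tracking $\ell(\mu_j)$), so that the net power of $x_{k-i+1}$ is $\ell(\lambda_j) - 1 - \ell(\mu_j) = \rho_1^i(\nu) - \chi_i(\nu)$. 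I would verify that the bracket conditions (2), (3), (5) on the sets $A_i, B_i$ emerge exactly as the constraints $\sigma(\cdot) \geq s$ or ``at least $s$ occurrences of the largest part'' supplied by the two lemmas, with $s = N_{i}$ (or the relevant successor height) playing the role of the buffer/overlap length.

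The sign and the buffers are the delicate part, and this is where I expect the main obstacle. The factor $q^{N_i}$ (rather than $q^{N_{i-1}}$ or $q^0$) in each term encodes whether a buffer sits between columns, i.e.\ the hat markings, and I must show that summing over the two possible hat placements in each column boundary produces exactly the weight $(-1)^{h(\nu)}$ together with the correct shift $\chi_i(\nu)$ in the rank. Concretely, the $\pm 1$ from $\chi_i$ and the accumulation of $h(\nu)$ must be reconciled against how the $q$-powers in the numerator redistribute the shape; getting the offsets to line up so that the monomial exponent of $x_{k-i+1}$ reads off $\rho_1^i$ \emph{exactly} (including the $-1$ in the first rank and the $\chi_i$ correction) is the step most prone to off-by-one errors. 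My strategy for this would be to first treat the hat-free case, confirming the rank identities with all $\chi_i = 0$, and then introduce the hats as an independent binary choice at each of the $k-1$ internal boundaries, checking that each hat simultaneously contributes a factor governing $(-1)^{h(\nu)}$ and shifts the associated rank by $\pm 1$ in accordance with the definition of $\chi_i$. Once the hat-free bijection and the hat bookkeeping are both verified, assembling the $k$ columns and matching $|\nu| = n$ against the total $q$-degree completes the argument.
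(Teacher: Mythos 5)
Your overall plan coincides with the paper's proof: the summation indices are read as column heights via $\#(\lambda_i) = N_{k-i+1}$, Lemmas \ref{initrun} and \ref{overrun} are applied with $t = N_k$ and $s = N_{k-1}$ to build the leftmost column, the remaining factors generate the tableau columns of the inner entries with the bracket conditions arising exactly as you describe, and the hats enter as weighted binary choices per column. One substantive correction: the buffers are \emph{not} encoded by the factor $q^{N_i}$. They are encoded by the cross terms of $(1-x_{k-i+1})(1-x_{k-i+1}^{-1})$, whose coefficient of $x_{k-i+1}^{\pm 1}$ is $-1$ and thus simultaneously supplies the factor $(-1)^{\chi_i(\nu)}$ and the shift $\chi_i(\nu) = \pm 1$ in the $i$th rank. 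The factor $q^{N_i}$ instead adds $1$ to each of the $N_i$ parts of the corresponding top-row entry (so that $\lambda_2, \dots, \lambda_k$ are partitions into positive parts), and in the $i=k$ factor it combines with $q^{(N_k^2 - N_k)/2}$ to form the staircase $q^{(N_k^2+N_k)/2}$ required by Lemma \ref{initrun}; reading the hat data off the $q$-powers as you propose would corrupt the bookkeeping of $|\nu|$.

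A second point you leave implicit is the treatment of representations with fewer than $k$ columns. When $n_1 = \cdots = n_{j-1} = 0$ and $n_j > 0$, one has $N_{j-1} = 0$, so $\aqprod{x_{k-j+1}q^{N_{j-1}}}{q}{n_j+1} = (1-x_{k-j+1})\aqprod{x_{k-j+1}q}{q}{n_j}$ and the factor $(1-x_{k-j+1})(1-x_{k-j+1}^{-1})$ cancels entirely. This cancellation is precisely what forces the rightmost (shortest) column to be hat-free; your ``independent binary choice at each internal boundary'' must be derived from this reduction rather than assumed for a fixed $k$-column shape.
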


\begin{proof}
Consider an arbitrary summand of the form
\begin{align*}
		\aqprod{-1}{q}{N_k}q^{\tfrac{N_k^2-N_k}{2}}
	\prod_{i=1}^{k} \frac{(1-x_{k-i+1})(1-{x^{-1}_{k-i+1}})q^{N_i}}{(x_{k-i+1}q^{N_{i-1}},x^{-1}_{k-i+1}q^{N_{i-1}})_{n_i+1}}.
\end{align*}
If $n_1 = \cdots = n_k = 0$, then the summand reduces to  $1$, which corresponds to the empty $B_1$-representation $\nu = \emptyset$.
Otherwise, $n_i > 0$ for some $i$.
Let $j$ be the smallest index so that $n_{j} > 0$.
Then the summand reduces to
\begin{align} \label{eq:B1-jsum}
	\aqprod{-1}{q}{N_k}q^{\tfrac{N_k^2-N_k}{2}}
	\prod_{i=j}^{k} \frac{(1-x_{k-i+1})(1-{x^{-1}_{k-i+1}})q^{N_i}}{(x_{k-i+1}q^{N_{i-1}},x^{-1}_{k-i+1}q^{N_{i-1}})_{n_i+1}}.
\end{align}
We claim that the coefficient of $\nkprex{x}{k}q^n$ in \eqref{eq:B1-jsum} is equal to the number of $B_1$-representations 
\begin{align*}
	\nu =
	\begin{pmatrix}
		\nkand{\alpha}{k-j+1}\\
		\nkand{\beta}{k-j+1}
	\end{pmatrix}
\end{align*}
where $\#(\alpha_i) = N_{k-i+1}$, such that $|\nu|=n$ and $\rho_1^i(\nu)=m_i$, where the  count is weighted by $(-1)^{h(\nu)}$.
Note that 
\begin{align*}
	  (-1)^{h(\nu)} = (-1)^{\sum \chi_i (\nu)}.
\end{align*}

The parts of $\alpha_1$ and $\beta_1$ are generated by the $i=k$ multiplicand, which we write as
\begin{align} \label{eq:B1-1}
	\bigg(
		\frac{(1-x_1) q^{\tfrac{N_k^2+N_k}{2}} }{\aqprod{x_1 q^{N_{k-1}}}{q}{n_k+1}}
	\bigg)
	\bigg(
		\frac{(1-{x^{-1}_{1}}) \aqprod{-1}{q}{N_k}}{\aqprod{x^{-1}_{1}q^{N_{k-1}}}{q}{n_k+1}}
	\bigg).
\end{align}

We use the fact that $N_{k} = N_{k-1} + n_{k}$ to apply Lemmas \ref{initrun} and \ref{overrun} with $t=N_k$ and $s=N_{k-1}$. Then we see that $\alpha_1$ is a partition into distinct parts with $\sigma(\alpha_1) \geq N_{k-1}$ and $\beta_1$ is an overpartition into $N_k$ nonnegative parts with $\os(\beta_1)\geq N_{k-1}$.
Here, the exponents of $x_1$ and $x_1^{-1}$ track $r(\alpha_1)$ and $\oo{r}_{CL}(\beta_1)+1$, respectively.

Given an arbitrary  $(\alpha_1, \beta_1)$, the coefficient of $x_1^{m_1}$ in 
$
	(1-x_1)(1-x_1^{-1}) 
$
 is equal to the weighted count of ways to mark $\alpha_1$ or $\beta_1$ with hats, where $m_1 = \chi_1(\nu)$ and the count is weighted by $(-1)^{\chi_1(\nu)}$.
Therefore, the coefficient of $x_1^{m_1} q^n$ in \eqref{eq:B1-1} is equal to the weighted count of possible columns $(\alpha_1, \beta_1)^T$ of a $B_1$-representation $\nu$ such that $\# (\alpha_1) = N_k$, $\#(\alpha_2) = N_{k-1}$, $n = |\alpha_1| + |\beta_1|$, and  
\begin{align*}
	m_1 = r(\alpha_1) - (\oo{r}_{CL}(\beta_1)+1) + \chi_1(\nu) = \rho_1^1 (\nu),
\end{align*}
 where the count is weighted by $(-1)^{\chi_1(\nu)}$.

For $1<i<k-j+1$, the parts of $\alpha_i$ and $\beta_i$ are generated by the $k-i+1$ multiplicand, which we write as
\begin{align} \label{eq:B1-mid}
	\bigg(
		\frac{(1-x_{i})q^{N_{k-i+1}}}{\aqprod{x_{i}q^{N_{k-i}}}{q}{n_{k-i+1}+1}}
	\bigg)
	\bigg(
		\frac{1-{x^{-1}_{i}}}{\aqprod{x^{-1}_{i}q^{N_{k-i}}}{q}{n_{k-i+1}+1}}
	\bigg).
\end{align}
As in Lemma \ref{initrun},
\begin{align*}
	\frac{q^{N_{k-i+1}}}{\aqprod{x_{i}q^{N_{k-i}}}{q}{n_{k-i+1}+1}}
\end{align*}
 generates the Young tableau of $\alpha_i$,  whose columns' lengths are bounded between $N_{k-i}$ and ${N_{k-i+1}}$.
We add $1$ to each part of $\alpha_i$ to account for $q^{N_{k-i+1}}$.
Thus, $\alpha_i$ is a nonempty partition with $N_{k-i+1}$ positive parts and at least $N_{k-i}$ occurrences of its largest part, and $\beta_{i}$ is a partition into $N_{k-i+1}$ nonnegative parts with at least $N_{k-i}$ occurrences of its largest part.
Here, the exponents of $x_i$ and $x_i^{-1}$ track $\ell(\alpha_1)-1$ and $\ell(\beta_1)$, respectively.

Because $(\alpha_i, \beta_i)^T$ is not the rightmost column of $\nu$, either entry may be marked with a hat. As with the previous column, entries marked by a hat are tracked by the term $(1-x_i)(1-x_i^{-1})$.
Therefore, the coefficient of $x_i^{m_i} q^n$ in \eqref{eq:B1-mid} is equal to the weighted count of possible columns $(\alpha_i, \beta_i)^T$ of $\nu$ such that $\# (\alpha_i) = N_{k-i+1}$, $\# (\alpha_{i+1}) = N_{k-i}$, $n = |\alpha_i| + |\beta_i|$, and  
\begin{align*}
	m_i = (\ell(\alpha_i) - 1) - \ell(\beta_i) + \chi_i(\nu) = \rho_1^i (\nu),
\end{align*}
 where the count is weighted by $(-1)^{\chi_i(\nu)}$.

 Finally, the parts of $\alpha_{k-j+1}$ and $\beta_{k-j+1}$ are generated by the $i= k-j+1$ multiplicand,
\begin{align*}
	\bigg(
		\frac{(1-x_{k-j+1})q^{N_{j}}}{\aqprod{x_{k-j+1}q^{N_{j-1}}}{q}{n_{j}+1}}
	\bigg)
	\bigg(
		\frac{(1-{x^{-1}_{j}})}{\aqprod{x^{-1}_{k-j+1}q^{N_{j-1}}}{q}{n_{j}+1}}
	\bigg).
\end{align*}
By minimality of $j$, we see that $n_1 = \cdots = n_{j-1} = 0$. Thus, $N_{j-1} = 0$, and the multiplicand reduces to
\begin{align} \label{eq:B1-last}
	\bigg(
		\frac{q^{N_j}}{\aqprod{x_{k-j+1}q}{q}{n_j}}
	\bigg)
	\bigg(
		\frac{1}{\aqprod{x^{-1}_{k-j+1}}{q}{n_j}}
	\bigg).
\end{align}
This reflects the fact that neither $\alpha_{k-j+1}$ or $\beta_{k-j+1}$ can be marked with a hat.
As with the previous column, we see that the coefficient of $x_{k-j+1}^{m_{k-j+1}} q^n$ in \eqref{eq:B1-last} is equal to the weighted count of possible columns $(\alpha_{k-j+1}, \beta_{k-j+1})^T$ of $\nu$
 such that $\# (\alpha_{k-j+1}) = N_{k-i+1}$,
 $n = |\alpha_{k-j+1}| + |\beta_{k-j+1}|$,
 and  $m_{k-j+1} =  \rho_1^{k-j+1} (\nu)$,
 where the count is weighted by $(-1)^{\chi_{k-j+1}(\nu)}$.

By combining these terms, we have counted all possible $\nu \in \mathcal{B}_1^k$ 
 with $|\alpha_i| = N_{k-i+1}$, $|\nu| = n$, $\rho_1^i(\nu) = m_i$, and $h(\nu)$ entries marked with a hat, where the count is weighted by $(-1)^{h(\nu)}$.
By summing over all values of $\nklist{n}{k}$, we generate all possible $B_1$-representations in $\mathcal{B}_1^k$.

\end{proof}

\subsection{Full Rank and Proof of Theorem \ref{thm1}}

We have one final statistic in this section. We define the \emph{full rank} of a $B_1$-representation $\nu$ to be the sum of the $i$th ranks of $\nu$, 

\begin{align*}
	\rho_1(\nu) := \sum_{i \geq 1} \rho_1^i(\nu).
\end{align*}

\noindent This sum converges for any $B_1$-representation $\nu$, as all but finitely many of the summands vanish. We may now prove Theorem \ref{thm1}.

\begin{proof}[Proof of Theorem \ref{thm1}]
Let $\zeta_k$ be a primitive $k$th root of unity. The desired generating series, 
\begin{align*}
	\sum_{\nu \in \mathcal{B}^k_1} (-1)^{h(\nu)} \prod_{i=1}^k\zeta_k^{ (i-1)\rho_1^i(\nu)} z^{\tfrac{\rho_1(\nu)}{k}} q^{|\nu|},
\end{align*}
is given by 
\begin{align} \label{punchline1}
\overline{R}_k(\sqrt[k]{z},\zeta_k\sqrt[k]{z},\dots, \zeta_k^{k-1}\sqrt[k]{z};q) = \Rk.
\end{align}
\end{proof}

We now have our combinatorial interpretation of $\Rk$.
 Observe that one of the series in \eqref{punchline1} is a series in $\sqrt[k]{z}$ with coefficients in $\ZZ[\zeta_k]$, and the other is a series in $z$ with integer coefficients. Thus, the weighted count must vanish for $B_1$-representations whose full rank is not a multiple of $k$.

We close this section by discussing conjugation maps on $\mathcal{B}_1^k$.

\subsection{Conjugation}

Given a buffered Frobenius representation of the first kind
\begin{align*}
\nu = \abarray,
\end{align*}
we define $k$ different conjugation maps corresponding to the columns of $\nu$. To perform the \emph{first conjugation}, delete a staircase from $\alpha_1$ by removing $\#(\alpha_1)$ from the first part, $\#(\alpha_1)-1$ from the second part and so on until removing $1$ from the smallest part. We next reverse Algorithm \ref{alg:JS} on $\beta_1$. Let $\lambda$ and $\mu$ be the partition and partition into distinct parts produced this way, respectively. Both $\alpha_1$ and $\lambda$ are partitions into $\#(\alpha_1)$ nonnegative parts with at least $\#(\alpha_2)$ occurrences of their largest parts. Add a staircase to $\lambda$ to produce $\alpha_1'$, and perform  Algorithm \ref{alg:JS} on $\alpha_1$ and $\mu$ to produce $\beta_1'$. 
We mark $\alpha_1'$ with a hat if and only if $\beta_1$ was marked with a hat, and vice versa.
We call
\begin{align*}
	\phi_1^1(\nu) :=
	\begin{pmatrix}
		\alpha'_1 & \alpha_2 & \dots & \alpha_k\\
		\beta'_1 & \beta_2 & \dots & \beta_k
	\end{pmatrix}
\end{align*}
the \emph{first conjugate} of $\nu$.

For example, let
\begin{align*}
	\nu =
	\begin{pmatrix}
		\widehat{(3,2,1)} & (2,2,1)&(3)\\
		(\overline{4},4,\overline{3})&\widehat{(1,0,0)}&(0)
	\end{pmatrix}.
\end{align*}
Then removing the staircase from $\alpha_1$ produces
\begin{align*}
	\alpha_1 = (0, 0, 0),
\end{align*}
while reversing Algorithm \ref{alg:JS} on $\beta_1$ produces
\begin{align*}
	\lambda &= (3, 3, 3)\\
	\mu &= (2, 0).
\end{align*}
Next, we add a staircase to $\lambda$, and perform Algorithm \ref{alg:JS} on $\alpha_1$ and $\mu$, producing
\begin{align*}
	\alpha_1' &= (6, 5, 4)\\
	\beta_1' &= (\oo{1}, 1, \oo{0}).
\end{align*}
Because $\alpha_1$ was marked with a hat, and $\beta_1$ was not marked with a hat, we see that
\begin{align*}
	\phi_1^1(\nu) =
	\begin{pmatrix}
		(6, 5, 4) & (2,2,1)&(3)\\
		\widehat{(\oo{1}, 1, \oo{0})} &\widehat{(1,0,0)}&(0)
	\end{pmatrix}.
\end{align*}

For $i > 1 $, the \emph{$i$th conjugation map} is performed as follows. First, subtract 1 from each part of $\alpha_i$ to produce $\beta_i'$, and add 1 to each part of $\beta_i$ to produce $\alpha_i'$. 
We mark $\alpha_i'$ with a hat if and only if $\beta_i$ was marked with a hat, and vice versa.
We call
\begin{align*}
	\phi_1^i(\nu) :=
	\begin{pmatrix}
	\alpha_1 &  \dots & \alpha_i' & \dots & \alpha_k\\
	\beta_1 &  \dots & \beta_i' & \dots & \beta_k
	\end{pmatrix}
\end{align*}
the \emph{$i$th conjugate} of $\alpha$.
We also define $\phi_1^i(\nu) := \nu$ if $\nu$ has fewer than $i$ columns.

For example, we see that 
\begin{align*}
	\phi_1^2 (\nu) &=
	\begin{pmatrix}
		\widehat{(3,2,1)} & \widehat{(2,1,1)}  &(3)\\
		(\overline{4},4,\overline{3}) & (1,1,0) & (0)
	\end{pmatrix}\\
	\phi_1^3(\nu) &=
	\begin{pmatrix}
		\widehat{(3,2,1)} & (2,2,1)&(1)\\
		(\overline{4},4,\overline{3})&\widehat{(1,0,0)}&(2)
	\end{pmatrix}.
\end{align*}

Each of the $i$th conjugation maps exchange the roles of
\begin{align*}
\frac{1-x_i}{\aqprod{x_i q^{N_{k-i}}}{q}{n_{k-i+1}+1}}
\mbox{ and }
\frac{1-x_i^{-1}}{\aqprod{x_i^{-1} q^{N_{k-i}}}{q}{n_{k-i+1}+1}}
\end{align*}
in \eqref{gen1}. This fact immediately implies two propositions. 

\begin{prop}
For all $i\geq 1$, we have $\rho_1^i(\phi_1^i(\nu)) = -\rho_1^i(\nu)$.
\end{prop}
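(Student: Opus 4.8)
The plan is to prove the identity at the level of individual representations by direct computation, organized into the two cases $i>1$ and $i=1$; the guiding principle throughout is the observation stated just before the proposition, namely that the $i$th conjugation exchanges the factor tracking $x_i$ with the factor tracking $x_i^{-1}$ in \eqref{gen1}. Since the exponent of $x_i$ records $\rho_1^i$ (Theorem \ref{1summand}), this exchange forces $\rho_1^i$ to be negated; the two cases below simply make this explicit on the combinatorial side.

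The case $i>1$ is immediate from the definitions. The map replaces $\lambda_i$ by $\mu_i' = \lambda_i - 1$ and $\mu_i$ by $\lambda_i' = \mu_i + 1$, so $\ell(\lambda_i') = \ell(\mu_i) + 1$ and $\ell(\mu_i') = \ell(\lambda_i) - 1$; exchanging the hats on the two entries gives $\chi_i(\phi_1^i(\nu)) = -\chi_i(\nu)$. Substituting these into $\rho_1^i(\nu) = (\ell(\lambda_i)-1) - \ell(\mu_i) + \chi_i(\nu)$ collapses to $\rho_1^i(\phi_1^i(\nu)) = -\rho_1^i(\nu)$ in one line.

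For $i=1$ I would track $r(\lambda_1)$, $\oo{r}_{CL}(\mu_1)+1$, and the hat marking through the composite map, invoking Lemmas \ref{initrun} and \ref{overrun} together with the Joichi--Stanton Algorithm \ref{alg:JS}. By the staircase bookkeeping in the proof of Lemma \ref{initrun}, deleting the staircase from $\lambda_1$ yields a partition into $\#(\lambda_1)$ nonnegative parts whose largest part equals $r(\lambda_1)$; by the proof of Lemma \ref{overrun}, reversing Algorithm \ref{alg:JS} on $\mu_1$ yields a pair $(\alpha,\beta)$ in which $\ell(\alpha) = \oo{r}_{CL}(\mu_1)+1$. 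The conjugation then transposes these two ingredients: adding a staircase to $\alpha$ gives $\lambda_1'$ with $r(\lambda_1') = \ell(\alpha) = \oo{r}_{CL}(\mu_1)+1$, while running Algorithm \ref{alg:JS} on the staircase-stripped $\lambda_1$ together with $\beta$ gives $\mu_1'$ with $\oo{r}_{CL}(\mu_1')+1 = r(\lambda_1)$. With the hats again exchanged so that $\chi_1(\phi_1^1(\nu)) = -\chi_1(\nu)$, plugging into $\rho_1^1(\nu) = r(\lambda_1) - (\oo{r}_{CL}(\mu_1)+1) + \chi_1(\nu)$ yields $\rho_1^1(\phi_1^1(\nu)) = -\rho_1^1(\nu)$.

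The main obstacle is the bookkeeping in the $i=1$ case: I must confirm that $\beta$ remains a valid second input to Algorithm \ref{alg:JS} when paired with the staircase-stripped $\lambda_1$, and that the largest-part data survives reversing and re-running the algorithm so that $r(\lambda_1)$ and $\oo{r}_{CL}(\mu_1)+1$ are transposed exactly. For this I would reuse the invariance established in the proof of Lemma \ref{overrun}, where $\oo{r}_{CL}$ is shown to be governed only by the largest part of the underlying partition and not by the overline positions recorded in $\beta$; this is precisely what guarantees that swapping the partition-data with the overline-data cleanly interchanges the two rank quantities.
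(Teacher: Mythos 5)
Your proof is correct and rests on the same observation the paper uses --- that $\phi_1^i$ exchanges the roles of the $x_i$- and $x_i^{-1}$-tracking factors in \eqref{gen1} --- the paper simply declares the proposition an immediate consequence of that exchange. Your case split for $i>1$ versus $i=1$, with the staircase and Joichi--Stanton bookkeeping in the latter, supplies the element-level verification the paper leaves implicit, and it checks out against Lemmas \ref{initrun} and \ref{overrun}.
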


\begin{prop}
For all nonnegative integers $i$ and $j$, $\phi_1^i\phi_1^j=\phi_1^j\phi_1^i$.
\end{prop}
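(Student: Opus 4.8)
The plan is to exploit the \emph{locality} of the conjugation maps: each $\phi_1^i$ rewrites only the $i$th column of $\nu$, and its definition reads only the data of that same column. Once this is established, commutativity for $i \neq j$ follows because the two maps act on disjoint coordinates, while the case $i = j$ is immediate.

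First I would record that $\phi_1^i$ is local. Inspecting the definitions, the first conjugation $\phi_1^1$ removes and re-adds a staircase to the top-left entry and reverses and re-applies Algorithm \ref{alg:JS} to the bottom-left entry, swapping the two hats; every one of these operations takes place entirely within the pair $(\lambda_1, \mu_1)$. For $i > 1$, the map $\phi_1^i$ merely subtracts $1$ from each part of $\lambda_i$, adds $1$ to each part of $\mu_i$, and swaps their hats. In neither case is any entry of a column $\ell \neq i$ read or modified. I would also note that if $\nu$ has fewer than $i$ columns, then $\phi_1^i$ is the identity, so such indices commute with everything trivially.

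Next I would observe that each $\phi_1^i$ preserves the column length $\#(\lambda_i) = \#(\mu_i)$: for $i = 1$ this holds because conjugating $\lambda_1$ by staircase removal and the Joichi--Stanton correspondence both preserve the number of nonnegative parts, and for $i > 1$ it holds because $\#(\lambda_i) = \#(\mu_i)$ to begin with, so interchanging their roles leaves the count fixed. Since all the structural constraints defining $\mathcal{B}_1^k$ (the distinctness of $\alpha_1$, the bracket and occurrence bounds, and the length conditions) are expressed through these counts and through per-column data, they remain intact; thus $\phi_1^i$ is a well-defined self-map of $\mathcal{B}_1^k$, and applying $\phi_1^j$ after $\phi_1^i$ is always legitimate.

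Finally I would conclude. For $i = j$ the identity $\phi_1^i\phi_1^i = \phi_1^i\phi_1^i$ is trivial. For $i \neq j$, by the locality of the first step and the preservation of column lengths in the second, the maps $\phi_1^i$ and $\phi_1^j$ act on disjoint and mutually independent portions of $\nu$: in either composition, column $i$ ends up as the $i$th conjugate of the original $i$th column, column $j$ as the $j$th conjugate of the original $j$th column, and every other column is left unchanged. Hence $\phi_1^i\phi_1^j(\nu) = \phi_1^j\phi_1^i(\nu)$. The one point demanding care — and the only real obstacle — is confirming that the adjacency-type constraints (such as ``at least $\#(\lambda_{i+1})$ occurrences of the largest part'') enter $\phi_1^i$ only as invariants guaranteed by the preserved column lengths, not as inputs that $\phi_1^{i\pm1}$ could disturb; the length-preservation step is precisely what rules this out.
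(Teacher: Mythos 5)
Your argument is correct and rests on the same observation the paper uses: each $\phi_1^i$ has support only in the $i$th column (the paper phrases this as $\phi_1^i$ exchanging the roles of the two $x_i^{\pm 1}$ factors in \eqref{gen1}), so distinct conjugations act on disjoint data and therefore commute. Your extra care in checking that the adjacency constraints and column lengths survive each conjugation, so that the composites are well defined, is a reasonable supplement to what the paper leaves implicit.
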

Finally, if we define the \emph{full conjugation} to be
\begin{align*}
	\phi_1 := \prod_{i \geq 1} \phi_1^i,
\end{align*}
then $\phi_1$ is defined for all $\nu \in \mathcal{B}_1$, and $\rho_1(\phi_1(\nu)) = -\rho_1(\nu)$.

We now consider a second family of buffered Frobenius representations.

\section{Buffered Frobenius Representations of the Second Kind}
\label{second}

Recall that  $\overline{R[2]}(z;q)$ is the generating series for the $M_2$-rank of  overpartitions.
We consider the series
\begin{multline*}
	\FrobkII : = \ovr\\
	\times \left(1+2\sum_{n=1}^\infty (-1)^{n} q^{n^2+2kn}\prod_{i=1}^k \frac{(1-x_i)(1-x_i^{-1})}{(1-x_iq^{2n})(1-x_i^{-1}q^{2n})}\right),
\end{multline*}
bearing in mind that
$$
	\overline{R2}_{k}(\sqrt[k]{z},\zeta_k\sqrt[k]{z},\dots, \zeta_k^{k-1}\sqrt[k]{z};q) = \overline{R[2k]}(z,q).
$$
The thoughtful reader may be concerned that we are reproducing the work of Section \ref{first}.
We will see that buffered Frobenius representations of the second kind  directly generalize Lovejoy's second Frobenius representation of overpartitions, as opposed to the multi-to-one correspondence that $B_1$-representations require.
We hope that studying both of these families will allow us to define an infinite family of overpartition ranks, as we discuss in Section \ref{end}.

We see a transformation of $\FrobkII$ in the theorem below.

\begin{thm} \label{2ndhype} Let $k \geq 1$ be a positive integer. Then we have
\begin{multline*}
	\ovr \left(1+2\sum_{n=1}^\infty (-1)^n q^{n^2+2kn}\prod_{i=1}^k \frac{(1-x_i)(1-x_i^{-1})}{(1-x_iq^{2n})(1-x_i^{-1}q^{2n})}\right)\\
	=
		\sum_{{
			n_1 ,
			\dots ,
			n_{k} \geq 0
		}}
	\frac{\aqprod{-1}{q}{2N_k}}{q^{N_k}} \prod_{i=1}^{k} \frac{(1-x_{k-i+1})(1-x_{k-i+1}^{-1})q^{2N_i}}{\aqprod{x_{k-i+1}q^{2N_{i-1}},x_{k-i+1}^{-1}q^{2N_{i-1}}}{q^2}{n_i+1}},
\end{multline*}
where we write $N_0 := 0$ and for all $1\leq i \leq k$, we write $N_i = \nkplus{n}{i}$.
\end{thm}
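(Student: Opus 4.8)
The plan is to mirror the proof of Theorem \ref{firsthype} step for step, but to run the entire transformation in base $q^2$ rather than base $q$, and to replace the single limiting parameter by a carefully chosen finite specialization. Concretely, I would start from Corollary \ref{swerdnA} after the substitutions $q \mapsto q^2$ and $k \mapsto k+1$, then let $N \to \infty$ to pass from the terminating to the nonterminating identity. The very-well-poised numerator is collapsed by the base-$q^2$ analogue of \eqref{pochcancel},
\begin{align*}
	\frac{\aqprod{a, q^2 a^{1/2}, -q^2 a^{1/2}}{q^2}{n}}{\aqprod{a^{1/2}, -a^{1/2}}{q^2}{n}} = (1 - a q^{4n}) \aqprod{a q^2}{q^2}{n-1},
\end{align*}
while the limit on the right is governed by the base-$q^2$ form of \eqref{Nlim}.

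The crucial departure from Section \ref{first} is the parameter choice. I would keep $a = 1$ and $b_i = x_i$, $c_i = x_i^{-1}$ for $1 \le i \le k$, but take $b_{k+1} = -1$ together with $c_{k+1} = -q$ instead of sending a parameter to infinity. With this choice the two factors attached to $c_{k+1}$ cancel on the left, since $\tfrac{a q^2}{c_{k+1}} = -q = c_{k+1}$, and the factor attached to $b_{k+1}$ reduces to $\tfrac{\aqprod{-1}{q^2}{n}}{\aqprod{-q^2}{q^2}{n}}$. Combining this with the $(1 + q^{2n})$ produced by the base-$q^2$ version of \eqref{pochcancel} (after the leftover $\aqprod{q^2}{q^2}{n-1}$ is absorbed into the denominator $\aqprod{q^2}{q^2}{n}$), one applies the elementary identity
\begin{align*}
	(1 + q^{2n}) \frac{\aqprod{-1}{q^2}{n}}{\aqprod{-q^2}{q^2}{n}} = 2,
\end{align*}
so the left side collapses to $1 + 2\sum_{n \ge 1}(\cdots)$. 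Tracking exponents, the prefactor $\bigl(\tfrac{a^{k+1}(q^2)^{k+1}}{b_1 c_1 \cdots b_{k+1} c_{k+1}}\bigr)^n$ becomes $q^{(2k+1)n}$ because $b_{k+1} c_{k+1} = q$, and multiplying by the $(-1)^n q^{n^2 - n}$ from the very-well-poised sum yields exactly the summand $(-1)^n q^{n^2 + 2kn}$; a single use of Lemma \ref{reduce} in base $q^2$ then turns the denominators into $(1 - x_i q^{2n})(1 - x_i^{-1} q^{2n})$, recovering the left side of the claimed identity after multiplication by $\tovr$.

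On the right side, the same specialization produces the factor $\aqprod{-1, -q}{q^2}{N_k} = \aqprod{-1}{q}{2N_k}$, the merging of two base-$q^2$ symbols into the single base-$q$ symbol that accounts for the anomalous base in the statement, while $\tfrac{1}{(b_{k+1} c_{k+1})^{N_k}} = q^{-N_k}$ supplies the $q^{-N_k}$ prefactor. The remainder is the same bookkeeping as in Theorem \ref{firsthype}: using $N_i = N_{i-1} + n_i$ with Lemma \ref{reduce} in base $q^2$ to telescope each ratio $\tfrac{\aqprod{x}{q^2}{N_{i-1}}}{\aqprod{x q^2}{q^2}{N_i}}$ into $\tfrac{(1-x)}{\aqprod{x q^{2N_{i-1}}}{q^2}{n_i + 1}}$, and collecting the remaining linear powers of $q$ into the displayed factors $q^{2N_i}$. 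I expect the main obstacle to be precisely this exponent bookkeeping around the finite specialization $c_{k+1} = -q$: because no parameter is sent to infinity, I must show that the quadratic contribution a limit would have introduced is genuinely absent, and verify that the mixed-base collapse $\aqprod{-1}{q^2}{N_k}\aqprod{-q}{q^2}{N_k} = \aqprod{-1}{q}{2N_k}$ together with the cancellation of the $c_{k+1}$ factors leaves exactly the linear powers $q^{-N_k}$ and $q^{2N_i}$ with no stray quadratic terms. Once the exponent $n^2 + 2kn$ is confirmed on the left and the single surviving base-$q$ Pochhammer is identified on the right, the remaining manipulations are routine and identical in spirit to Section \ref{first}.
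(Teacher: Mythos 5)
Your proposal follows the paper's proof essentially step for step: the same substitution $q \mapsto q^2$, $k \mapsto k+1$ into Corollary \ref{swerdnA}, the same limit $N \to \infty$ with $a = 1$, and the same key specialization $b_{k+1} = -1$, $c_{k+1} = -q$ (which is exactly where the paper says it diverges from Theorem \ref{firsthype}), with all the exponent and Pochhammer computations you sketch — the cancellation via $\tfrac{aq^2}{c_{k+1}} = c_{k+1}$, the identity $(1+q^{2n})\tfrac{\aqprod{-1}{q^2}{n}}{\aqprod{-q^2}{q^2}{n}} = 2$, and the collapse $\aqprod{-1,-q}{q^2}{N_k} = \aqprod{-1}{q}{2N_k}$ — agreeing with the paper's. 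The argument is correct and the bookkeeping you flag as a potential obstacle works out exactly as you anticipate.
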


\begin{proof}
We begin by substituting $k \mapsto (k+1)$ and $q \mapsto q^2$ in Corollary \ref{swerdnA}.
Then we have
\begin{multline*}
	\pPq{2k+6}{2k+5\!\!}{a, q^2a^{\tfrac{1}{2}}, -q^2a^{\tfrac{1}{2}}, b_1, c_1, \dots, b_{k+1}, c_{k+1}, q^{-2N}}{a^{\tfrac{1}{2}}, -a^{\tfrac{1}{2}}, \tfrac{aq}{b_1}, \tfrac{aq}{c_1},  \dots, \tfrac{aq}{b_{k+1}}, \tfrac{aq}{c_{k+1}}, aq^{2N+2} }{q^2;  \frac{a^kq^{2k+2N}}{\prod_{i=1}^{k+1} b_i c_i} } \\
	=\frac{\aqprod{aq^2,\tfrac{aq^2}{b_{k+1} c_{k+1}}}{q^2}{N}}{\aqprod{\tfrac{aq^2}{b_{k+1}},\tfrac{aq^2}{c_{k+1}}}{q^2}{N}}
		\sum_{{
			n_1,
			\dots ,
			n_{k} \geq 0
		}}
	\frac{\aqprod{\tfrac{aq^2}{b_kc_k}}{q^2}{n_1}}{\aqprod{q^2}{q^2}{n_1}}
	 \frac{\aqprod{\tfrac{aq^2}{b_{k-1}c_{k-1}}}{q^2}{n_2}}{\aqprod{q^2}{q^2}{n_2}} \cdots \frac{\aqprod{\tfrac{aq^2}{b_1c_1}}{q^2}{n_k}}{\aqprod{q^2}{q^2}{n_k}}\\
	\times \frac{\aqprod{b_{k-1}, c_{k-1}}{q^2}{N_1}}{\aqprod{\tfrac{aq^2}{b_k}, \tfrac{aq^2}{c_k}}{q^2}{N_1}} \frac{\aqprod{b_{k-2}, c_{k-2}}{q^2}{N_2}}{\aqprod{\tfrac{aq^2}{b_{k-1}}, \tfrac{aq^2}{c_{k-1}}}{q^2}{N_2}} \cdots  \frac{\aqprod{b_{1}, c_{1}}{q^2}{N_{k-1}}}{\aqprod{\tfrac{aq^2}{b_2}, \tfrac{aq^2}{c_2}}{q^2}{N_{k-1}}}  \\
\times \frac{\aqprod{b_{k+1}, c_{k+1}}{q^2}{N_k}}{\aqprod{\tfrac{aq^2}{b_1}, \tfrac{aq^2}{c_1}}{q^2}{N_k}} \frac{\aqprod{q^{-2N}}{q^2}{N_k}}{\aqprod{a^{-1}b_{k+1}c_{k+1}q^{-2N}}{q^2}{N_k}}\\
	\times \frac{(aq^2)^{\nkplus{N}{k-1}}q^{2N_k}}{(b_{k}c_{k})^{n_1} (b_{k-1}c_{k-1})^{N_2}\cdots (b_{1}c_{1})^{N_{k-1}}}.
\end{multline*}

Next, we take the limit as $N \to \infty$ and set $a =1$. As in the proof of Theorem \ref{firsthype}, we use \eqref{pochcancel} and \eqref{Nlim} to simplify the $q$-Pochhammer symbols. The equation becomes
\begin{multline*}
	1+\sum_{n=1}^\infty
	(1+q^{2n})
	\frac{\aqprod{b_1, c_1, \ldots, b_{k+1}, c_{k+1}}{q^2}{n}(-1)^n q^{n^2-n}}{\aqprod{\tfrac{q^2}{b_1}, \tfrac{q^2}{c_1}, \dots, \tfrac{q^2}{b_{k+1}}, \tfrac{q^2}{c_{k+1}}}{q^2}{n}}
	\left( \frac{q^{2k+2}}{\prod_{i=1}^{k+1} b_i c_i} \right)^n\\
	=\frac{\aqprod{q^2,\tfrac{q^2}{b_{k+1}c_{k+1}}}{q^2}{\infty}}{\aqprod{\tfrac{q^2}{b_{k+1}},\tfrac{q^2}{c_{k+1}}}{q^2}{\infty}}
		\sum_{{
			n_1 ,
			\dots ,
			n_{k} \geq 0
		}}
	\frac{\aqprod{\tfrac{q^2}{b_kc_k}}{q^2}{n_1}}{\aqprod{q^2}{q^2}{n_1}}
	 \frac{\aqprod{\tfrac{q^2}{b_{k-1}c_{k-1}}}{q^2}{n_2}}{\aqprod{q^2}{q^2}{n_2}} \cdots \frac{\aqprod{\tfrac{q^2}{b_1c_1}}{q^2}{n_k}}{\aqprod{q^2}{q^2}{n_k}}\\
	\times \frac{\aqprod{b_{k-1},c_{k-1}}{q^2}{N_1}}{\aqprod{\tfrac{q^2}{b_k},\tfrac{q^2}{c_k}}{q^2}{N_1}} \frac{\aqprod{b_{k-2},c_{k-2}}{q^2}{N_2}}{\aqprod{\tfrac{q^2}{b_{k-1}},\tfrac{q^2}{c_{k-1}}}{q^2}{N_2}}\cdots \frac{\aqprod{b_{1},c_{1}}{q^2}{N_{k-1}}}{\aqprod{\tfrac{q^2}{b_2},\tfrac{q^2}{c_2}}{q^2}{N_{k-1}}} \frac{\aqprod{b_{k+1},c_{k+1}}{q^2}{N_k}}{\aqprod{\tfrac{q^2}{b_1},\tfrac{q^2}{c_1}}{q^2}{N_k}}\\
	\times 
	\frac{q^{2N_1 + 2N_2 + \cdots + 2N_k}}{(b_{k+1} c_{k+1})^{N_k}(b_{k}c_{k})^{N_1} (b_{k-1}c_{k-1})^{N_2} \cdots (b_{1}c_{1})^{N_{k-1}}}.
\end{multline*}

Continue, setting $b_i = x_i$ and $c_i = x_i^{-1}$ for $1 \leq i \leq k$. We now diverge from the proof of Theorem \ref{firsthype} by setting $b_{k+1} = -1$ and $c_{k+1} = -q$. The term
\begin{align*}
	(-1)^n q^{n^2 - n}
	\frac{\aqprod{c_{k+1}}{q^2}{n}}{\aqprod{\tfrac{q^2}{c_{k+1}}}{q^2}{n}}
	\left(
		\frac{q^{2k+2}}{b_{k+1} c_{k+1}}
	\right)^n
\end{align*}
in the left hand side of the equation
reduces to $(-1)^n q^{n^2 + 2kn}$,
 and we obtain
\begin{align*}
	1+\sum_{n=1}^\infty
	(1+q^{2n})
	\frac{\aqprod{x_1, x_1^{-1}, x_2, x_2^{-1}, \dots, x_k, x_k^{-1}, -1}{q^2}{n} (-1)^n q^{n^2+2kn}}{\aqprod{x_1q^2, x_1^{-1}q^2, x_2q^2, x_2^{-1}q^2, \dots, x_k q^2, x_k^{-1}q^2, -q^2}{q^2}{n}}.
\end{align*}
The right hand side of the equation becomes
\begin{multline*}
	\frac{\aqprod{q^2,q}{q^2}{\infty}}{\aqprod{-q^2,-q}{q^2}{\infty}}
		\sum_{{
			n_1,
			\dots ,
			n_{k} \geq 0
		}}
	\frac{\aqprod{x_{k-1},x^{-1}_{k-1}}{q^2}{N_1}}{\aqprod{x_kq^2,x_k^{-1}q^2}{q^2}{N_1}}\\
	\times \frac{\aqprod{x_{k-2},x^{-1}_{k-2}}{q^2}{N_2}}{\aqprod{x_{k-1}q^2,x_{k-1}^{-1}q^2}{q^2}{N_2}}\cdots \frac{\aqprod{x_{1},x^{-1}_{1}}{q^2}{N_{k-1}}}{\aqprod{x_2q^2,x_2^{-1}q^2}{q^2}{N_{k-1}}} \\
	\times \frac{\aqprod{-1,-q}{q^2}{N_k}q^{2N_1 + 2N_2 + \cdots + 2N_{k-1} + N_k}}{\aqprod{x_1q^2,x_1^{-1}q^2}{q^2}{N_k}} .
\end{multline*}

\noindent On the left hand side of the equation, we use Lemma \ref{reduce} to obtain
%
%
\begin{align*}
	1+2\sum_{n=1}^\infty (-1)^n q^{n^2+2kn}\prod_{i=1}^k \frac{(1-x_i)(1-x_i^{-1})}{(1-x_iq^{2n})(1-x_i^{-1}q^{2n})}.
\end{align*}

\noindent On the right hand side of the equation, we use Lemma \ref{reduce} and the relations
\begin{align*}
\frac{\aqprod{q^2,q}{q^2}{\infty}}{\aqprod{-q^2,-q}{q^2}{\infty}} &= \frac{\aqprod{q}{q}{\infty}}{\aqprod{-q}{q}{\infty}},\\
\aqprod{-1,-q}{q^2}{n} &= \aqprod{-1}{q}{2n}
\end{align*}
\noindent to obtain
\begin{multline*}
	\frac{\aqprod{q}{q}{\infty}}{\aqprod{-q}{q}{\infty}}
		\sum_{{
			n_1,
			\dots ,
			n_{k} \geq 0
		}}
	 \frac{\aqprod{-1}{q^2}{2N_k} q^{\nkplus{2N}{k-1}+N_k}}{\aqprod{x_kq^2, x_k^{-1}q^2}{q^2}{N_1}}\\
	\times \frac{(1-x_{k-1})(1-x_{k-1}^{-1})}{\aqprod{x_{k-1}q^{2N_1}, x_{k-1}^{-1}q^{2N_1}}{q^2}{n_2+1}}\\
	 \times \frac{(1-x_{k-2})(1-x_{k-2}^{-1})}{\aqprod{x_{k-2}q^{2N_2}, x_{k-2}^{-1}q^{2N_2}}{q^2}{n_3+1}}
	 \cdots \frac{(1-x_1)(1-x_1^{-1})}{\aqprod{x_{1}q^{2N_{k-1}}, x_{1}^{-1}q^{2N_{k-1}}}{q^2}{n_{k}+1}}.
\end{multline*}
Since $N_0 :=0$, the right side becomes
\begin{align*}
	\frac{\aqprod{q}{q}{\infty}}{\aqprod{-q}{q}{\infty}}
		\sum_{{
			n_1,
			\dots ,
			n_{k} \geq 0
		}}
	\frac{\aqprod{-1}{q^2}{2N_k}}{q^{N_k}}\prod_{i=1}^k \frac{(1-x_{k-i+1})(1-x_{k-i+1}^{-1})q^{2N_i}}{\aqprod{x_{k-i+1}q^{2N_{i-1}},x_{k-i+1}^{-1}q^{2N_{i-1}}}{q^2}{n_i+1}}.
\end{align*}
Here we have rewritten $q^{N_k}$ as $q^{2N_k}/q^{N_k}$ in order to simplify the product notation. Multiplying both sides by $\tovr$ gives us the desired equation,
\begin{multline} \label{2theorem}
	\ovr
	\left(1+
	2\sum_{n=1}^\infty 
	(-1)^n q^{n^2+2kn}\prod_{i=1}^k \frac{(1-x_i)(1-x_i^{-1})}{(1-x_iq^{2n})(1-x_i^{-1}q^{2n})}\right)\\
	= 
		\sum_{{
			n_1,
			\dots ,
			n_{k} \geq 0
		}}
	\frac{\aqprod{-1}{q}{2N_k}}{q^{N_k}} \prod_{i=1}^{k} \frac{(1-x_{k-i+1})(1-x_{k-i+1}^{-1})q^{2N_i}}{\aqprod{x_{k-i+1}q^{2N_{i-1}},x_{k-i+1}^{-1}q^{2N_{i-1}}}{q^2}{n_i+1}}.
\end{multline}
\end{proof}

\subsection{Overpartition Statistics}

In order to interpret \eqref{2theorem} as a generating series, we must introduce additional partition and overpartition statistics.
The first is a variation of Berkovich and Garvan's $M_2$-rank for partitions \cite{Berk} implied by work of Lovejoy \cite{LovejoyM2}.
Given a partition $\lambda$ into nonnegative parts where odd parts may not repeat, the $\emph{second partition rank}$ of $\lambda$ is defined to be
\begin{align*}
	r_2(\lambda) := \lfloor \tfrac{\ell(\lambda)}{2} \rfloor - \#(\lambda_{o,<}),
\end{align*}
where $\lambda_{o,<}$ is the subpartition of $\lambda$ consisting of all odd parts of $\lambda$ which are less than $\ell(\lambda)$.
For example, if $\lambda = (6,5)$, then $r_2(\lambda) = 3-1=2$.

We introduce another variation of the partition bracket.
Let $\lambda = (\nklist{\ell}{n})$ be a partition into nonnegative parts where odd parts may not repeat. The \emph{second bracket} of $\lambda$ is  the length of the longest substring of $\lambda$ of the form $(\nklist{\ell}{k})$, where for all $1\leq i < k$, we have $|\ell_{i+1}-\ell_i|<2$. We denote the second bracket of $\lambda$ by $\sigma_2(\lambda)$.

For example, if $\lambda = (8,8,7,6,4,4,2)$, then we consider the substrings
\begin{align*}
\begin{matrix}
(8),&
(8,8), &
(8,8,7), &
(8,8,7,6),
\end{matrix}
\end{align*}
the longest of which has length $4$.
Therefore, $\sigma_2(\lambda) =4$. 
We see how the second  rank and the second bracket relate to \eqref{2theorem} in the following lemma.

\begin{lem} \label{2frob1lemma}
Fix nonnegative integers $1 \leq s \leq t$. The coefficient of $z^mq^n$ in
\begin{align*}
\frac{\aqprod{-q}{q^2}{t}}{\aqprod{zq^{2s}}{q^2}{t-s+1}}
\end{align*}
is equal to the number of partitions $\lambda$ of $n$ into $t$ nonnegative parts where odd parts may not repeat with $r_2(\lambda) = m$ and  $\sigma_2(\lambda) \geq s$.
\end{lem}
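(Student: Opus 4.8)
The plan is to mirror the proofs of Lemmas \ref{initrun} and \ref{overrun}, replacing $q$ by $q^2$ throughout and letting an \emph{odd part} play the role that overlining played in the first kind. First I would read off the two factors separately. By the column-generating part of the argument in the proof of Lemma \ref{initrun}, applied with $q$ replaced by $q^2$, the term $\frac{1}{\aqprod{zq^{2s}}{q^2}{t-s+1}}$ generates a partition $\pi$ into exactly $t$ nonnegative \emph{even} parts with at least $s$ occurrences of its largest part, where $z$ records $\lfloor \ell(\pi)/2\rfloor$ and $q$ records $|\pi|$; in particular the largest part is $2m$, so before any insertion we already have $r_2(\pi) = m$ and $\sigma_2(\pi) \ge s$. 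The remaining factor $\aqprod{-q}{q^2}{t} = \prod_{i=0}^{t-1}(1+q^{2i+1})$ generates a set of distinct odd numbers $\{\,2i+1 : i \in S\,\}$ for some $S \subseteq \{0,1,\dots,t-1\}$, which will be used to convert some even parts of $\pi$ into odd parts.

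Next I would define a $2$-modular analogue of the Joichi--Stanton map (Algorithm \ref{alg:JS}): processing the elements $i \in S$ in decreasing order, add $2$ to each of the first $i$ parts of $\pi$ and add $1$ to the $(i+1)$-st part. Each step adds exactly $2i+1$ to the weight, matching the contribution $q^{2i+1}$, and it makes the $(i+1)$-st part odd while leaving every other parity unchanged, in exact parallel with the way Algorithm \ref{alg:JS} overlines the $(m_i+1)$-st part. Writing the total increment at position $p$ as $\Delta_p = 2\,|\{i \in S : i \ge p\}| + [\,p-1 \in S\,]$, a short computation shows that the parts created odd form a strictly decreasing sequence (any two differ by at least $2$), so the output $\lambda$ does lie in the target class of partitions with distinct odd parts, and that each insertion leaves $\lfloor \ell(\lambda)/2\rfloor$ minus the number of odd parts below $\ell(\lambda)$ unchanged, whence $r_2(\lambda) = m$ is preserved throughout. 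The map is then reversed by reading off the positions of the odd parts to recover $S$ and undoing the increments, giving the bijection and hence the claimed equality of coefficients.

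The step I expect to be the main obstacle is the invariance of the second bracket, namely showing that every insertion preserves $\sigma_2(\lambda) \ge s$. This is the counterpart of the $\os$-invariance argument in the proof of Lemma \ref{overrun}, and as there I would argue by induction on $|S|$, tracking the substring that realizes the bracket and checking in cases (according to whether the processed index $i$ lies inside, at the end of, or beyond that substring) that its length is neither shortened nor spuriously lengthened. The delicate point is that within the first $s$ positions the increments force $\lambda_p - \lambda_{p+1} = [\,p \in S\,] + [\,p-1 \in S\,] \in \{0,1,2\}$, so a descent of $2$ occurs exactly when both neighbouring parts are odd; the case analysis must therefore confirm that in the $2$-modular setting such an odd--odd descent still continues the bracket, which is precisely the second-kind analogue of the ``at least one overlined part'' clause in the overpartition bracket $\os$.
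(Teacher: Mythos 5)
Your proposal is essentially the paper's proof: the same reading of the two factors (pairs of columns for the $z$-factor, a set of distinct odd numbers less than $2t$ for $\aqprod{-q}{q^2}{t}$), your $2$-modular insertion map is exactly Algorithm \ref{alg:JS2}, and you control $\sigma_2$ by the same induction on the number of inserted odd parts and $r_2$ by the same invariance argument. Your ``delicate point'' is well taken and in fact sharper than the paper's own treatment: an odd--odd descent of $2$ really can occur among the first $s$ parts (e.g.\ inserting the odd parts $3$ and $1$ into $(4,4)$ yields $(7,5)$), and the bracket survives only if $\sigma_2$ is read with the parity clause you describe, not with the bare condition $|n_{i+1}-n_i|<2$ as literally stated in the paper's definition of the second bracket.
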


The proof rests on Lovejoy's modification of Algorithm \ref{alg:JS}.

\begin{alg}[Lovejoy \cite{LovejoyM2}] \label{alg:JS2}
Input: A partition into $n$ nonnegative even parts $\lambda = (\nklist{\ell}{n})$, and a partition $\mu = (\nklist{m}{k})$ into $k$ distinct odd parts less than $2n$.

\noindent Output: A partition $\lambda' = (\ell'_1, \ell'_2, \dots, \ell_n')$ into $n$ nonnegative parts with $k$ distinct odd parts.

\begin{enumerate}
\item Delete the largest part of $\mu$, which we may write as $m_1 = 2s+1$.

\item Add 2 to the first $s$ parts of $\lambda$, then add 1 to $\ell_{s+1}$. Note that $\lambda_{s+1}$ is now odd. If $s=0$, then we instead add $1$ to $\lambda_1$. This operation is well defined, as $\lambda$ has exactly $n$ parts and $m_1 = 2s+1<2n$, which implies $s+1\leq n$.

\item Relabel the parts of $\mu$, if any exist, so that the largest part of $\mu$ is $m_1$. We now repeat Steps (1) and (2) until the parts of $\mu$ are exhausted.
\end{enumerate}
\end{alg}

Because the parts of $\mu$ are distinct, we see that $\lambda$ is a partition into $n$ nonnegative parts with $k$ distinct odd parts.

\begin{proof}[Proof of Lemma \ref{2frob1lemma}]

The term $$\frac{1}{\aqprod{zq^{2s}}{q^2}{t-s+1}}$$ generates pairs of columns in the Young tableau of a partition $\lambda$. Therefore, $\lambda$ has $t$ even nonnegative parts with at least $s$ occurrences of the largest part, and the coefficient of $z$ tracks one half of the largest part of $\lambda$.
The term $\aqprod{-q}{q^2}{t}$ generates a partition $\mu$ into distinct odd parts less than $2t$. 
We use Algorithm \ref{alg:JS2} to produce a partition $\lambda'$ into $t$ even nonnegative parts where odd parts may not repeat.
We claim that the second bracket of $\lambda'$ is equal to the number of occurrences of the largest part of $\lambda$, which is at least $s$.

To show that  $\sigma_2 (\lambda ') \geq s$, we induct on the number of parts of $\mu$. If $\mu$ is empty, then $\lambda'$ only consists of even parts.
In this case, $\sigma_2(\lambda')$ is equal to the number of occurrences of the largest part of $\lambda'$, which is $s$, and the second rank is equal to $m$.

Suppose that $\mu = (\nklist{m}{k+1})$ with $k+1$ parts, and let $\lambda'$ be the partition corresponding to $(\lambda, (\nklist{m}{k}))$. By assumption,  $\sigma_2(\lambda') \geq s$. Write $m_{k+1} = 2s_{k+1}+1$ and $m_k = 2s_k+1$. Because $m_{k+1}<m_k$, the first $s_{k+1}$ parts of $\lambda'$ must have the same parity.

If $\sigma_2(\lambda')\leq s_{k+1}$, then adding 2 to the first $s_{k+1}$ parts of $\lambda'$ will leave the second bracket unchanged. Otherwise, $\sigma_2(\lambda') > s_{k+1}$. In this case, adding 2 to the first $s_{k+1}$ parts of $\lambda'$ and adding $1 $ to $\ell_{s_{k+1}+1}$ also leaves the second bracket unchanged. In either case, we have shown that the result holds for a $\mu$ with $k+1$ parts.
Therefore, $\lambda$ is a partition of $n$ into $t$ nonnegative parts with $\sigma_2(\lambda)\geq s$.

Each step in Algorithm \ref{alg:JS2} adds an odd part to $\lambda$ and increases the largest part by either $1$ or $2$.
Let $\lambda'_o$ denote the subpartition whose parts are the odd parts of $\lambda'$ which are less than $\ell(\lambda)$.
Then $\ell(\lambda') = 2m + 2(\lambda_o)$ if $\ell(\lambda')$ is even, and $\ell(\lambda') = 2m + 2(\lambda_o) + 1$ if $\ell(\lambda')$ is odd.
In either case, we see that
$m = \lfloor \tfrac{\ell(\lambda)}{2} \rfloor - \#(\lambda'_o) = r_2(\lambda')$.
\end{proof}

We need a variation of the overpartition rank implied by the work of Lovejoy \cite{LovejoyM2}.
Given an overpartition $\lambda$ into odd parts, the \emph{second overpartition rank} of $\lambda$ is defined to be
\begin{align*}
	\overline{r}_2(\lambda) :=
	\tfrac{\ell(\lambda)-1}{2} - \#(\lambda_{<}),
\end{align*}
where we recall $\lambda_{<}$ is the sub-overpartition of $\lambda$ consisting of all overlined parts of $\lambda$ less than $\ell(\lambda)$.
For example, if $\lambda = (3, \overline{1})$, then the second overpartition rank of $\lambda$ is given by $1-1=0$.

We also introduce a variation of the overpartition bracket corresponding to $\overline{r}_2(\lambda)$. Given an overpartition $\lambda$ into odd parts, the \emph{second overpartition bracket} of $\lambda$ is  the length of the longest substring of $\lambda$ of the form $(\ell_1, \ell_2, \dots, \ell_k)$, where for all $1\leq i < k$, one of the following holds:
\begin{itemize}
	\item $\ell_i = \ell_{i+1}$

	\item $\ell_i = \ell_{i+1} +2$  and at least one of $\ell_i$ or $\ell_{i+1}$ is overlined.
\end{itemize} 
 We denote the second overpartition bracket of $\lambda$ by $\os_2(\lambda)$.

For example, if $\lambda =(5,\overline{3},3,1)$, the substrings we consider are
\begin{align*}
	\begin{matrix}
		(5), &
		(5,\overline{3}), &
		(5,\overline{3},3),
	\end{matrix}
\end{align*}
the longest of which has length $3$. Therefore, $\os_2(\lambda)=3$. We see how the second overpartition rank and the second overpartition bracket relate to \eqref{2theorem} in the following lemma.

\begin{lem} \label{2frob2lemma}
Fix nonnegative integers $1 \leq s \leq t$. The coefficient of $z^mq^n$ in
\begin{align*}
\frac{\aqprod{-1}{q^2}{t}q^t}{\aqprod{zq^{2s}}{q^2}{t-s+1}}
\end{align*}
is equal to the number of overpartitions $\lambda$ of $n$  into $t$ odd parts with $\overline{r}(\lambda)=m$ and $\os_2(\lambda)\geq s$.
\end{lem}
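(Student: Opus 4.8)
The plan is to mirror the proof of Lemma \ref{overrun}, replacing the Joichi--Stanton map (Algorithm \ref{alg:JS}) by its natural $q^2$-analogue that inserts overlined \emph{odd} parts. First I would read off the denominator exactly as in the proof of Lemma \ref{2frob1lemma}: the term $1/\aqprod{zq^{2s}}{q^2}{t-s+1}$ generates a partition $\lambda$ into $t$ even nonnegative parts with at least $s$ occurrences of its largest part, where the exponent of $z$ records one half of the largest part. The external factor $q^t$ then adds $1$ to each of the $t$ parts, converting them all to positive odd parts; after this step the exponent of $z$ records $\tfrac{\ell(\lambda)-1}{2}$, the first term of $\overline{r}_2$.

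Next I would interpret the numerator $\aqprod{-1}{q^2}{t} = \prod_{i=0}^{t-1}(1+q^{2i})$ as generating a partition $\mu$ into distinct even nonnegative parts less than $2t$, with the factor $1+q^0 = 2$ governing whether $0$ occurs as a part. I would then apply the following modification of Algorithm \ref{alg:JS}: processing the parts of $\mu$ from largest to smallest, for a part $m_1 = 2j$ add $2$ to the first $j$ parts of $\lambda$ and overline the $(j+1)$-st part, overlining the largest part when $j=0$. Since adding $2$ and overlining both preserve oddness, this yields an overpartition $\lambda'$ into $t$ odd parts, with exactly one overlined part for each part of $\mu$.

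Then I would prove $\os_2(\lambda') \geq s$ by induction on $\#(\mu)$, exactly as in the proof of Lemma \ref{overrun}: the base case $\mu = \emptyset$ gives $\os_2(\lambda')$ equal to the number of occurrences of the largest part, which is at least $s$, and the inductive step checks that inserting one more overlined odd part via the modified algorithm leaves the length of the substring realizing $\os_2$ unchanged. For the rank, each part $m_i > 0$ raises the largest part by $2$ and creates one overlined part below the maximum, while $m_i = 0$ overlines the maximum itself; hence $\ell(\lambda') = \ell(\lambda) + 2\,\#\{m_i > 0\}$ and the number of overlined parts below $\ell(\lambda')$ equals $\#\{m_i > 0\}$, so that $\overline{r}_2(\lambda') = \tfrac{\ell(\lambda')-1}{2} - \#\{m_i > 0\} = \tfrac{\ell(\lambda)-1}{2} = m$, as required.

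The main obstacle is the inductive verification that the second overpartition bracket is invariant under the modified Joichi--Stanton map. Because $\os_2$ permits consecutive parts to agree or to differ by $2$ provided one of them is overlined, the case analysis is more delicate than for $\os$: I would track separately whether the index $j+1$ at which the new overline is inserted falls inside, at the boundary of, or outside the substring achieving $\os_2$, and confirm in each case that raising the leading parts by $2$ and overlining one of them neither lengthens nor shortens that substring. Reversibility of the map, needed to promote the weighted count to a genuine bijection, then follows as in Lovejoy's treatment of Algorithm \ref{alg:JS}.
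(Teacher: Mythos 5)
Your proposal is correct and follows essentially the same route as the paper, which simply asserts that the proof is ``almost identical to that of Lemma \ref{overrun}''; you have spelled out exactly the intended $q^2$-analogue of Algorithm \ref{alg:JS} (distinct even parts $2j<2t$ from $\aqprod{-1}{q^2}{t}$, add $2$ to the first $j$ parts and overline the $(j+1)$-st), and your bookkeeping giving $\overline{r}_2(\lambda')=m$ with no offset matches how the lemma is used in Theorem \ref{2term}.
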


The proof of Lemma \ref{2frob2lemma} is almost identical to that of Lemma \ref{overrun}.
We can now give a combinatorial interpretation of \eqref{2theorem} in terms of a second family of buffered Frobenius representations.

\subsection{Buffered Frobenius Representations of the Second Kind}

\begin{defn}
A buffered Frobenius representation of the second kind, or a $B_2$-representation, is a buffered Frobenius representation
\begin{align*}
	\nu
	\in
	\begin{pmatrix}
		\nkand{A}{k}\\
		\nkand{B}{k}
	\end{pmatrix}
\end{align*}
where
\begin{enumerate}
\item $A_1$ is the set of nonempty overpartitions $\alpha_1$ into odd parts.

\item $A_2$ is the set of nonempty partitions $\alpha_2$ into even parts, with $\#(\alpha_2) \leq \os_2(\alpha_1)$.

\item For all $3<i\leq k$, $A_i$ is the set of nonempty partitions $\alpha_i$ into even parts with $\#(\alpha_i)$ less than or equal to the number of occurrences of the largest part of $\alpha_{i-1}$

\item $B_1$ is the set of partitions $\beta_1$ into $\#(\alpha_1)$ nonnegative parts where odd parts may not repeat, with $\sigma_2(\beta_1) \geq \#(\alpha_2)$.

\item For all $2\leq i < k$, $B_i$ is the set of partitions $\beta_i$ into $\#(\alpha_i)$ nonnegative even parts and at most $\#(\alpha_{i+1})$ occurrences of their largest part.

\item $B_k$ is the set of partitions $\beta_i$ into $\#(\alpha_i)$ nonnegative even parts.
\end{enumerate}
We also define the empty array to be a $B_2$-representation with $k=0$.
\end{defn} 

For example, consider the array

\begin{align} \label{frob2ex}
\nu=
\begin{pmatrix}
\widehat{(3,\overline{1})}&(2,2)&(4)\\
(6,5)&(2,0)&(2)
\end{pmatrix}.
\end{align}
On the top row, $\alpha_1$ is an overpartition into odd parts, which satisfies (1). Next, $\alpha_2$ is a partition into two even parts, with two occurrences of its largest part.
Because $\os_2(\alpha_1) = 2$, this satisfies (2). 
Finally, $\alpha_3$ is an partition into a single even part.
Because $\alpha_2$ has two occurrences of its largest part,  this satisfies (3).

On the bottom row, $\beta_1$ is a partition into two parts with no repeating odd parts, and $\sigma_2(\beta_1) = 2$, which satisfies (4). 
Next, $\beta_2$ is a partition into two nonnegative even parts with a single occurrence of its largest part, which satisfies (5). 
Finally, $\beta_3$ is a partition into one nonnegative part, which satisfies (6).
Additionally, $\alpha_1$ is marked with a hat.

As in Section \ref{sec:buff}, we see that Lovejoy's second Frobenius representations of overpartitions correspond to the case $k = 1$ above.
For $k > 1$, we can collapse $B_2$-representations using the jigsaw map.
\begin{prop}
	Let $\mathcal{B}_2$ denote the set of $B_2$-representations, and let $\mathcal{F}_2$ denote the set of second Frobenius representations of overpartitions. Then $j:\mathcal{B}_2 \to \mathcal{F}_2$ is a surjective map. 
\end{prop}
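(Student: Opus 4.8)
The plan is to establish two things: that $j$ is well defined as a map into $\mathcal{F}_2$, meaning $j(\nu)$ really is a second Frobenius representation for every $\nu \in \mathcal{B}_2$; and that $j$ hits every element of $\mathcal{F}_2$. I expect surjectivity to be routine and well-definedness to carry the weight of the argument, exactly as for the first-kind proposition.

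For well-definedness, I would write $j(\nu) = (\alpha,\beta)^T$ with $a_j = \sum_i a_{(i,j)}$ and $b_j = \sum_i b_{(i,j)}$ as in the jigsaw map, and then verify the defining conditions of $\mathcal{F}_2$ from Theorem \ref{M2bi} one at a time. First I would note that each zero-padded column $\alpha_i$ and $\beta_i$ is nonincreasing, so the column-wise sums $\alpha$ and $\beta$ are nonincreasing; this handles the ordering, and the length equality $\#(\alpha) = \#(\beta) = \#(\alpha_1)$ is built into the definition. Next I would track parities: by the $B_2$ conditions, $\alpha_1$ consists of odd parts while every later $\alpha_i$ consists of even parts, so each $a_j$ is a sum of one odd integer and several even integers and hence odd, making $\alpha$ a partition into odd parts; similarly each $b_j$ shares the parity of $b_{(1,j)}$, since the remaining columns of the bottom row contribute only even parts.

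The step I expect to be the main obstacle is verifying the overline convention on $\alpha$ and the no-repeated-odd-part condition on $\beta$, since these are global conditions on the collapsed array rather than column-by-column parities. The key observation I would isolate is that, because every column is nonincreasing, an equality $a_j = a_{j+1}$ forces $a_{(i,j)} = a_{(i,j+1)}$ for every $i$, and in particular $a_{(1,j)} = a_{(1,j+1)}$; the analogous statement holds for $\beta$. Granting this, a repeated value of $\alpha$ can only arise from a repeated value in $\alpha_1$, and since $j$ overlines $a_j$ exactly when $a_{(1,j)}$ is overlined and $\alpha_1$ is a genuine overpartition into odd parts, the overlines on $\alpha$ obey the overpartition convention. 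Dually, a repeated odd value $b_j = b_{j+1}$ would force a repeated odd value in $\beta_1$, contradicting condition (4) that $\beta_1$ have no repeated odd parts; hence the odd parts of $\beta$ do not repeat. Together these place $j(\nu)$ in $\mathcal{F}_2$.

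Finally, for surjectivity I would take an arbitrary $(\alpha,\beta)^T \in \mathcal{F}_2$ and use the single-column embedding of Section \ref{sec:buff}: since $\mathcal{F}_2$ is precisely the $k=1$ case of a $B_2$-representation, the one-column array $\begin{pmatrix}\alpha\\\beta\end{pmatrix}$ lies in $\mathcal{B}_2$, and the jigsaw map leaves it fixed, as there are no hats to discard, no padding required, and no sums to form. Thus $j\begin{pmatrix}\alpha\\\beta\end{pmatrix} = (\alpha,\beta)^T$, so $j : \mathcal{B}_2 \to \mathcal{F}_2$ is surjective, completing the proof.
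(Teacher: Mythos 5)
Your proof is correct, and it follows the route the paper intends: the paper states this proposition without any written proof, treating both well-definedness and surjectivity as evident from the jigsaw construction and the identification of $\mathcal{F}_2$ with the one-column ($k=1$) case, which is exactly the skeleton you flesh out. The one substantive point your write-up adds — and it is the right key observation — is that an adjacent equality $a_j = a_{j+1}$ in a sum of nonincreasing (zero-padded) columns forces componentwise equality $a_{(i,j)} = a_{(i,j+1)}$ for every $i$, which is what reduces the global overline convention on $\alpha$ and the non-repeating-odd-parts condition on $\beta$ to the corresponding conditions on $\alpha_1$ and $\beta_1$ alone; the parity bookkeeping (odd plus evens in the top row, parity inherited from $\beta_1$ in the bottom row) and the fixed-point argument for surjectivity are exactly as the paper's construction suggests.
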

Taken with Theorem \ref{M2bi}, we see that every $B_2$-representation $\nu$ corresponds to an overpartition $\lambda$, although this correspondence is many-to-one.
Thus the ranks we will establish to study $\FrobkII$ do not immediately carry over to the set of overpartitions.

\subsection{Ranks of $B_2$-representations}
Recall the definition of $\chi_i$ from Section \ref{first}.
If
\begin{align*}
\nu = \abarray,
\end{align*}
then we define the \emph{first rank} of $\nu$  to be
\begin{align*}
	\rho_2^1(\nu) := \overline{r}_2(\alpha_1) - r_2(\beta_1) + \chi_1(\nu),
\end{align*}
that is, the second overpartition rank of $\alpha_1$ minus the second partition rank of $\beta_1$ plus $\chi_1(\nu)$.
We also define $\rho_2^1(\emptyset) := 0$.

For $2\leq i \leq k$, we define the \emph{$i$th} rank of $\nu$ to be
\begin{align*}
	\rho_2^{i}(\nu) = \left(\frac{\ell(\alpha_i)}{2} - 1\right) - \frac{\ell(\beta_i)}{2} + \chi_i(\nu ),
\end{align*}
which is an integer since $\alpha_i$ and $\beta_i$ have even parts. We also define $\rho_2^i(\nu) := 0$ whenever $\nu$ has fewer than $i$ columns.

For example, let
\begin{align*}
\nu=
\begin{pmatrix}
\widehat{(3,\overline{1})}&(2,2)&(4)\\
(6,5)&(2,0)&(2)
\end{pmatrix}.
\end{align*}
Then
\begin{align*}
	\rho_2^1(\nu) &= (1 - 1) - (3-1) + 1 = -1 \\
	\rho_2^2(\nu) &= (1-1) - 1 + 0 = -1\\
	\rho_2^3(\nu) & = (2-1) -1 + 0 = 0,
\end{align*}
and $\rho_2^i(\nu) = 0$ for $i > 3$.

We now establish $\FrobkII$ as the generating series for the ranks of $B_2$-representations.

\subsection{Generating Series}

Let $\mathcal{B}_2^k$ denote the set of $B_2$-representations with at most $k$ columns,
\begin{align*}
	\mathcal{B}_2^k := 
	\left\{
	\begin{pmatrix}
		\nkand{\alpha}{j}\\
		\nkand{\beta}{j}
	\end{pmatrix}
	\in
	\mathcal{B}_2\
	\middle|\
	j \leq k
	\right\}
	.
\end{align*}

We see the generating series for the $i$th ranks of $B_2$-representations in $\mathcal{B}_2^k$ in the following theorem.

\begin{thm} \label{2term}
The coefficient of $\nkprex{x}{k} q^n$ in
	\begin{align*}
	\sum_{{
		n_1,
		\dots ,
		n_k \geq 0
	}}
	 \frac{\aqprod{-1}{q}{2N_k}}{q^{N_k}} \prod_{i=1}^{k} \frac{(1-x_{k-i+1})(1-x_{k-i+1}^{-1})q^{2N_i}}{\aqprod{x_{k-i+1}q^{2N_{i-1}},x_{k-i+1}^{-1}q^{2N_{i-1}}}{q^2}{n_i+1}}
	\end{align*}
is equal to the number of $B_2$-representations $\nu \in \mathcal{B}_2^k$ such that $|\nu|=n$ and $\rho_2^i(\nu)=m_i$, where the  count is weighted by $(-1)^{h(\nu)}$.

\end{thm}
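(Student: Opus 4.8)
The plan is to mirror the proof of Theorem \ref{1summand} almost verbatim, substituting the second-kind lemmas (Lemmas \ref{2frob1lemma} and \ref{2frob2lemma}) for the first-kind lemmas (Lemmas \ref{initrun} and \ref{overrun}) and carrying the base $q^2$ in place of $q$. First I would fix an arbitrary summand indexed by $(\nklist{n}{k})$. If every $n_i$ vanishes, the $q$-Pochhammer symbols collapse and the summand equals $1$, corresponding to the empty $B_2$-representation. Otherwise I would let $j$ be the least index with $n_j > 0$, so that $N_0 = \cdots = N_{j-1} = 0$, and restrict the product to the multiplicands with $i \geq j$; this produces a $B_2$-representation with $k-j+1$ columns in which $\#(\lambda_i) = N_{k-i+1}$.

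The key structural step is the treatment of the first column $(\lambda_1,\mu_1)^T$, generated by the $i=k$ multiplicand together with the prefactor $\tfrac{\aqprod{-1}{q}{2N_k}}{q^{N_k}}$. Here I would invoke the factorization $\aqprod{-1}{q}{2N_k} = \aqprod{-1}{q^2}{N_k}\aqprod{-q}{q^2}{N_k}$ already used in the proof of Theorem \ref{2ndhype}, and split the combined factor as
\begin{align*}
	\bigg( \frac{(1-x_1)\aqprod{-1}{q^2}{N_k}q^{N_k}}{\aqprod{x_1 q^{2N_{k-1}}}{q^2}{n_k+1}} \bigg)
	\bigg( \frac{(1-x_1^{-1})\aqprod{-q}{q^2}{N_k}}{\aqprod{x_1^{-1} q^{2N_{k-1}}}{q^2}{n_k+1}} \bigg).
\end{align*}
Applying Lemma \ref{2frob2lemma} with $t = N_k$, $s = N_{k-1}$ to the first bracket yields the overpartition $\lambda_1$ into $N_k$ odd parts with $\os_2(\lambda_1) \geq N_{k-1}$ whose second overpartition rank is tracked by $x_1$; applying Lemma \ref{2frob1lemma} to the second bracket yields the partition $\mu_1$ into $N_k$ nonnegative parts with non-repeating odd parts whose second rank is tracked by $x_1^{-1}$. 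As in the first-kind argument, the coefficient of $x_1^{m_1}$ in $(1-x_1)(1-x_1^{-1})$ records the $(-1)^{\chi_1(\nu)}$-weighted choice of marking $\lambda_1$ or $\mu_1$ with a hat, so that the total exponent of $x_1$ equals $\oo{r}_2(\lambda_1) - r_2(\mu_1) + \chi_1(\nu) = \rho_2^1(\nu)$.

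For each interior column $(\lambda_i,\mu_i)^T$ with $1 < i < k-j+1$, generated by the $(k-i+1)$-th multiplicand, the same column-generation argument in base $q^2$ applies: the factor $\tfrac{q^{2N_{k-i+1}}}{\aqprod{x_iq^{2N_{k-i}}}{q^2}{n_{k-i+1}+1}}$ produces a partition $\lambda_i$ into $N_{k-i+1}$ positive even parts with at least $N_{k-i}$ occurrences of its largest part (the $q^{2N_{k-i+1}}$ adds $2$ to each part), so that $x_i$ tracks $\tfrac{\ell(\lambda_i)}{2}-1$, while the symmetric factor produces a partition $\mu_i$ into $N_{k-i+1}$ nonnegative even parts with $x_i^{-1}$ tracking $\tfrac{\ell(\mu_i)}{2}$, and $(1-x_i)(1-x_i^{-1})$ accounts for the hat via $\chi_i$. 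Thus the exponent of $x_i$ equals $\rho_2^i(\nu)$. For the terminal column $i = k-j+1$, the relation $N_{j-1} = 0$ collapses the multiplicand exactly as in \eqref{eq:B1-last}, reflecting that neither $\lambda_{k-j+1}$ nor $\mu_{k-j+1}$ may carry a hat. Assembling the columns and summing over $(\nklist{n}{k})$ then generates every element of $\mathcal{B}_2^k$ with the asserted weight.

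The main obstacle I anticipate is purely the bookkeeping introduced by the base change $q \mapsto q^2$: one must verify that the halved largest-part statistics $\tfrac{\ell(\lambda_i)}{2}$ and $\tfrac{\ell(\mu_i)}{2}$ emerge correctly from the ``pairs of columns'' interpretation underlying Lemma \ref{2frob1lemma}, that the floor conventions in $r_2$ and the second-rank convention in $\oo{r}_2$ align with the exponents of $x_1$ in the first column, and that the factorization of $\aqprod{-1}{q}{2N_k}$ distributes one half to the odd-part overpartition $\lambda_1$ and the other to the non-repeating-odd-part partition $\mu_1$. Once these correspondences are checked against the definitions of the second-kind ranks, the remaining steps are formally identical to the proof of Theorem \ref{1summand}.
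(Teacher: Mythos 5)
Your proposal is correct and follows essentially the same route as the paper's own proof: the same case split on the minimal index $j$ with $n_j>0$, the same factorization $\aqprod{-1}{q}{2N_k}=\aqprod{-1}{q^2}{N_k}\aqprod{-q}{q^2}{N_k}$ distributing the two halves to $\lambda_1$ (via Lemma \ref{2frob2lemma}) and $\mu_1$ (via Lemma \ref{2frob1lemma}) with $t=N_k$, $s=N_{k-1}$, and the same column-by-column treatment of the interior and terminal multiplicands in base $q^2$. The bookkeeping points you flag at the end are exactly the ones the paper's proof resolves, and they check out as you describe.
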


\begin{proof}

Consider an arbitrary summand of the form
\begin{align*}
		 \frac{\aqprod{-1}{q}{2N_k}}{q^{N_k}} \prod_{i=1}^{k} \frac{(1-x_{k-i+1})(1-x_{k-i+1}^{-1})q^{2N_i}}{\aqprod{x_{k-i+1}q^{2N_{i-1}},x_{k-i+1}^{-1}q^{2N_{i-1}}}{q^2}{n_i+1}}.
\end{align*}
If $n_1 = \cdots = n_k = 0$, then the summand reduces to  $1$, which corresponds to the empty $B_2$-representation $\nu = \emptyset$.
Otherwise, $n_i > 0$ for some $i$.
Let $j$ be the smallest index so that $n_{j} > 0$.
Then the summand reduces to
\begin{align} \label{eq:B2-jsum}
	 \frac{\aqprod{-1}{q}{2N_k}}{q^{N_k}} \prod_{i=j}^{k} \frac{(1-x_{k-i+1})(1-x_{k-i+1}^{-1})q^{2N_i}}{\aqprod{x_{k-i+1}q^{2N_{i-1}},x_{k-i+1}^{-1}q^{2N_{i-1}}}{q^2}{n_i+1}}.
\end{align}
We claim that the coefficient of $\nkprex{x}{k}q^n$ in \eqref{eq:B2-jsum} is equal to the number of $B_2$-representations 
\begin{align*}
	\nu =
	\begin{pmatrix}
		\nkand{\alpha}{k-j+1}\\
		\nkand{\beta}{k-j+1}
	\end{pmatrix}
\end{align*}
where $\#(\alpha_i) = N_{k-i+1}$, such that $|\nu|=n$ and $\rho_2^i(\nu)=m_i$, where the  count is weighted by $(-1)^{h{(\nu)}}$.
Note that 
\begin{align*}
	  (-1)^{h(\nu)} = (-1)^{\sum \chi_i (\nu)}.
\end{align*}

The parts of $\alpha_1$ and $\beta_1$ are generated by the $i=k$ multiplicand, which we write as
\begin{align} \label{eq:B2-1}
	\bigg(
		\frac{(1-x_1) \aqprod{-1}{q^2}{N_k}q^{N_k}}{\aqprod{x_1 q^{2N_{k-1}}}{q^2}{n_k+1}}
	\bigg)
	\bigg(
		\frac{(1-x_1^{-1})\aqprod{-q}{q^2}{N_k}}{\aqprod{x_1^{-1} q^{2N_{k-1}}}{q^2}{n_k+1}}
	\bigg).
\end{align}
We use the fact that $N_k=N_{k-1} + n_k$ to apply Lemmas \ref{2frob1lemma} and \ref{2frob2lemma} with $t=N_k$ and $s=N_{k-1}$.
Then we see that $\alpha_1$ is an overpartition into $N_k$ odd parts with $\os_2(\alpha_1)\geq N_{k-1}$, and $\beta_1$ is a partition into $N_k$ nonnegative parts where odd parts may not repeat with $\sigma_2(\beta_1)\geq N_{k-1}$.
Here, the exponents of $x_1$ and $x_1^{-1}$ track $\oo{r}_2(\alpha_1)$ and $r_2 (\beta_1)$, respectively.

As in the proof of Theorem \ref{1summand}, the term $(1-x_1)(1-x_1^{-1})$ tracks whether or not $\alpha_1$ and $\beta_1$ are marked with a hat.
Thus, the coefficient of $x_1^{m_1} q^n$ in \eqref{eq:B2-1} is equal to the weighted count of of possible columns $(\alpha_1, \beta_1)^T$ in a $B_2$-representation $\nu$ such that $\#(\alpha_1) = N_k$, $\#(\alpha_2) = N_{k-1}$, $n = |\alpha_1| + |\beta_1|$,  and $m_1 = \oo{r}_2(\alpha_1) - r_2(\beta_1) + \chi_1(\nu) = \rho_2^1(\nu)$, where the count is weighted by $(-1)^{\chi_1(\nu)}$.

For $j<i<k$, the parts of $\alpha_{i}$ and $\beta_{i}$ are generated by the $k-i+1$ multiplicand, which we write as
\begin{align} \label{eq:B2-mid}
	\bigg(
		\frac{(1-x_{i})q^{2N_{k-i+1}}}{\aqprod{x_{i}q^{2N_{k-i}}}{q^2}{n_{k-i+1}+1}}
	\bigg)
	\bigg(
		\frac{(1-x_{i}^{-1})}{\aqprod{x_{i}^{-1}q^{2N_{k-i}}}{q^2}{n_{k-i+1}+1}}
	\bigg).
\end{align}
As in the proof of Lemma \ref{2frob1lemma}, \eqref{eq:B2-mid} generates pairs of columns in the tableau for $\alpha_i$ and $\beta_i$. 
We see that $\alpha_i$ is a nonempty partition into $N_{k-i+1}$ even parts with at least $N_{k-i}$ occurrences of its largest part, and $\beta_i$ is a nonempty partition into $N_{k-i+1}$ nonnegative even parts with at least $N_{k-i}$ occurrences of its largest part.
Here, the exponents of $x_i$ and $x_i^{-1}$ track $\tfrac{\ell(\alpha_i)}{2} -1$ and $\tfrac{\ell(\beta_i)}{2}$, respectively.
As with the previous column, entries marked with a hat are tracked by $(1-x_i)(1-x_i^{-1})$.
Thus, the coefficient of $x_i^{m_i} q^n$ in \eqref{eq:B2-mid} is equal to the weighted count of of possible columns $(\alpha_i, \beta_i)^T$ in a $B_2$-representation $\nu$ such that $\#(\alpha_i) = N_{k-i+1}$, $\#(\alpha_{i+1}) = N_{k-i}$, $n = |\alpha_i| + |\beta_i|$,  and 
$$
	m_i = \bigg(\frac{\ell(\alpha_i)}{2} -1\bigg) - \frac{\ell(\beta_i)}{2}  + \chi_1(\nu) = \rho_2^i(\nu),
$$
where the count is weighted by $(-1)^{\chi_i(\nu)}$.

The parts of $\alpha_{k-j+1}$ and $\beta_{k-j+1}$ are generated by the $i=j$ multiplicand
\begin{align*}
	\bigg(
		\frac{(1-x_{k-j+1})q^{2N_j}}{\aqprod{x_{k-j+1} q^{2N_{j-1}}}{q^2}{n_j+1}}
	\bigg)
	\bigg(
		\frac{(1-{x_{k-j+1}^{-1}})}{\aqprod{x_{k-j+1}^{-1}q^{2N_{j-1}}}{q^2}{n_j+1}}
	\bigg).
\end{align*}
By minimality of $j$, we see that $n_1 = \cdots = n_{j-1} = 0$. Thus, $N_{j-1} = 0$, and the multiplicand reduces to
\begin{align} \label{eq:B2-last}
	\bigg(
		\frac{q^{2N_j}}{\aqprod{x_{k-j+1}q^{2}}{q^2}{n_j}}
	\bigg)
	\bigg(
		\frac{1}{\aqprod{x_{k-j+1}^{-1}q^{2}}{q^2}{n_j}}
	\bigg).
\end{align}
This reflects the fact that neither $\alpha_{k-j+1}$ or $\beta_{k-j+1}$ can be marked with a hat.
As with the previous column, we see that the coefficient of $x_{k-j+1}^{m_{k-j+1}} q^n$ in \eqref{eq:B2-last} is equal to the weighted count of possible columns $(\alpha_{k-j+1}, \beta_{k-j+1})^T$ of $\nu$ such that $\# (\alpha_{k-j+1}) = N_{k-i+1}$, $n = |\alpha_{k-j+1}| + |\beta_{k-j+1}|$, and  $m_{k-j+1} = \rho_2^{k-j+1} (\nu)$, where the count is weighted by $(-1)^{\chi_{k-j+1}(\nu)}$.

By combining these terms, we have counted all possible $\nu \in \mathcal{B}_2^k$   with $|\alpha_i| = N_{k-i+1}$, $|\nu| = n$, $\rho_2^i(\nu) = m_i$, and $h(\nu)$ entries marked with a hat, where the count is weighted by $(-1)^{h(\nu)}$.
By summing over all values of $\nklist{n}{k}$, we count all possible $B_2$-representations in $ \mathcal{B}_2^k$.

\end{proof}

\subsection{Full Rank and Proof of Theorem \ref{thm2}}

As in Section \ref{first}, we define the \emph{full rank} of a $B_2$-representation $\nu$ to be
the sum of the $i$th ranks of $\nu$, 

\begin{align*}
	\rho_2(\nu) := \sum_{i \geq 1} \rho_2^i(\nu).
\end{align*}

\noindent This sum converges for any $B_2$-representation $\nu$, as all but finitely many of the summands vanish. We may now prove Theorem \ref{thm2}.

\begin{proof}[Proof of Theorem \ref{thm2}]
Let $\zeta_k$ be a primitive $k$th root of unity. The desired generating series, 
\begin{align*}
	\sum_{\nu \in \mathcal{B}^k_2} (-1)^{h(\nu)} \prod_{i=1}^k\zeta_k^{ (i-1)\rho_2^i(\nu)} z^{\tfrac{\rho_2(\nu)}{k}} q^{|\nu|},
\end{align*}
is given by 
\begin{align} \label{punchline2}
\overline{R2}_k(\sqrt[k]{z},\zeta_k\sqrt[k]{z},\dots, \zeta_k^{k-1}\sqrt[k]{z};q) = \RkII.
\end{align}

\end{proof}

We now have our combinatorial interpretation of $\RkII$.
 As in Section \ref{first}, the weighted count in \eqref{punchline2} must vanish for $B_2$-representations whose full rank is not a multiple of $k$.

We close this section by discussing conjugation maps on $\mathcal{B}_2^k$.

\subsection{Conjugation}

Given a $B_2$-representation
\begin{align*}
\nu = \abarray,
\end{align*}
we define $k$ different conjugation maps corresponding to the columns of $\nu$.
To perform the \emph{first conjugation}, we  subtract $1$ from each part of $\alpha_1$ and reverse Algorithm \ref{alg:JS} to obtain a partition into nonnegative even parts $\lambda$ and a partition into distinct even parts $\mu$.
We reverse Algorithm \ref{alg:JS2} on $\beta_1$ and obtain a partition into nonnegative even parts $\gamma$ and a partition into distinct odd parts $\delta$. Note that $\#(\lambda) = \#(\gamma)$ by construction.

We then perform Algorithm \ref{alg:JS} on $\gamma$ and $\mu$ to produce $\alpha_1'$ and perform Algorithm \ref{alg:JS2} on $\lambda$ and $\delta$ to produce $\beta_1'$. 
Next, add $1$ to each part of $\alpha_1'$.
Finally,  mark $\alpha_1'$ with a hat if and only if $\beta_1$ was marked with a hat, and vice versa.
We call
\begin{align*}
	\phi_2^1(\nu) :=
	\begin{pmatrix}
		\alpha'_1 & \alpha_2 & \dots & \alpha_k\\
		\beta'_1 & \beta_2 & \dots & \beta_k
	\end{pmatrix}
\end{align*}
the \emph{first conjugate} of $\nu$. 

For example, if
\begin{align*}
\nu=
	\begin{pmatrix}
		\widehat{(3,\overline{1})}&(2,2)&(4)\\
		(6,5)&(2,0)&(2)
	\end{pmatrix},
\end{align*}
then we see that
\begin{align*}
	\lambda &= (0, 0)\\
	\mu &= (2)\\
	\gamma &= (4,4)\\
	\delta &= (3).
\end{align*}
Performing Algorithms \ref{alg:JS} and \ref{alg:JS2}, produces
\begin{align*}
	\lambda_1' &= (6, \oo{4} )\\
	\mu_1' &= (2, 1 ),
\end{align*}
and adding $1$ to each part of $\lambda'_1$ yields
\begin{align*}
	\phi_2^1(\nu) =
	\begin{pmatrix}
		(7, \oo{5}) &(2,2)&(4)\\
		\widehat{(2, 1)} &(2,0)&(2)
	\end{pmatrix}.
\end{align*}

For $1<i\leq k$, the \emph{$i$th conjugation map} is performed as follows.
First, subtract 2 from each part of $\alpha_i$ to produce $\alpha_i'$, and add 2 to each part of $\beta_i$ to produce $\beta_i'$.
Mark $\alpha_i'$ with a hat if and only if $\beta_i$ was marked with a hat, and vice versa.
We call
\begin{align*}
	\phi_i(\nu) :=
	\begin{pmatrix}
	\alpha_1 &  \dots & \alpha_{i-1} & \alpha_i' & \alpha_{i+1} & \dots & \alpha_k\\
	\beta_1 &  \dots & \beta_{i-1}&  \beta_i' & \beta_{i+1} & \dots & \beta_k
	\end{pmatrix}
\end{align*}
the \emph{$i$th conjugate} of $\nu$.
Keeping $\nu$ as above, we have
\begin{align*}
	\phi_2^2(\nu) & =
	\begin{pmatrix}
		\widehat{(3,\overline{1})}&(4,2)&(4)\\
		(6,5)&(0,0)&(2)
	\end{pmatrix},	
	\\
	\phi_2^3(\nu) &=
	\begin{pmatrix}
		\widehat{(3,\overline{1})}&(2,2)&(4)\\
		(6,5)&(2,0)&(2)
	\end{pmatrix}
	.
\end{align*}

Each of the $i$th conjugation maps exchange the roles of
\begin{align*}
\frac{1-x_i}{\aqprod{x_i q^{2N_{k-i}}}{q^2}{n_{k-i+1}+1}}
\mbox{ and }
\frac{1-x_i^{-1}}{\aqprod{x_i^{-1} q^{2N_{k-i}}}{q^2}{n_{k-i+1}+1}}
\end{align*}
in \eqref{2theorem}. We find the same relations between conjugation maps as in Section \ref{first}.

\begin{prop}
For all $i\geq 1$, we have $\rho_2^i(\phi_2^i(\nu)) = -\rho_2^i(\nu)$.
\end{prop}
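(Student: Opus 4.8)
The plan is to split into the two cases $i>1$ and $i=1$, in both cases exploiting the fact, recorded just above the statement, that the $i$th conjugation map exchanges the factor carrying $x_i$ with the factor carrying $x_i^{-1}$ in \eqref{2theorem}. For $i>1$ the argument is a direct computation. The map $\phi_2^i$ adds $2$ to every part of $\mu_i$ to form $\lambda_i'$ and subtracts $2$ from every part of $\lambda_i$ to form $\mu_i'$, so $\ell(\lambda_i') = \ell(\mu_i)+2$ and $\ell(\mu_i') = \ell(\lambda_i)-2$; as in Section \ref{first}, the accompanying hat swap gives $\chi_i(\phi_2^i(\nu)) = -\chi_i(\nu)$. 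Substituting these into the definition of the $i$th rank yields
\begin{align*}
	\rho_2^i(\phi_2^i(\nu))
	&= \left(\frac{\ell(\mu_i)+2}{2}-1\right) - \frac{\ell(\lambda_i)-2}{2} - \chi_i(\nu)\\
	&= -\left(\frac{\ell(\lambda_i)}{2}-1 - \frac{\ell(\mu_i)}{2} + \chi_i(\nu)\right)
	= -\rho_2^i(\nu),
\end{align*}
which is the claim.

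For $i=1$ I would verify that the first conjugation interchanges the contributions of $\overline{r}_2(\lambda_1)$ and $r_2(\mu_1)$. Recall that $\phi_2^1$ first strips $\lambda_1$ and $\mu_1$ into base partitions into nonnegative even parts ($\alpha$ and $\gamma$) together with decoration data ($\beta$ and $\delta$) recording the overlines and odd parts, via the reverses of Algorithms \ref{alg:JS} and \ref{alg:JS2}; it then reattaches $\beta$ to $\gamma$ and $\delta$ to $\alpha$, and finally restores the staircase shift by $\pm 1$. The point is that $\overline{r}_2(\lambda_1)$ is governed by the largest part of the base partition $\alpha$ (the overlined parts counted in $\overline{r}_2$ being precisely what the reverse of Algorithm \ref{alg:JS} peels off into $\beta$), and likewise $r_2(\mu_1)$ is governed by the largest part of $\gamma$. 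Swapping $\alpha \leftrightarrow \gamma$ therefore interchanges these two statistics, and together with the hat swap, which again negates $\chi_1$, this produces $\rho_2^1(\phi_2^1(\nu)) = -\rho_2^1(\nu)$.

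The main obstacle is the $i=1$ case: one must confirm that the reverse-and-reapply passes through Algorithms \ref{alg:JS} and \ref{alg:JS2} genuinely isolate $\overline{r}_2$ and $r_2$ in the largest parts of $\alpha$ and $\gamma$, so that after the swap and the $\pm1$ staircase correction the two ranks are exchanged with no leftover shift. This amounts to re-running the bookkeeping from the proofs of Lemmas \ref{2frob1lemma} and \ref{2frob2lemma} in reverse, checking at each step that the parity conventions (even base, odd-or-overlined decoration) are preserved and that $\#(\alpha)=\#(\gamma)$ keeps the reassembly well defined. Conceptually, however, the whole proposition is immediate from the generating-series picture established in Theorem \ref{2term}: the exponent of $x_i$ attached to the summand of \eqref{2theorem} that generates $\nu$ is exactly $\rho_2^i(\nu)$, and $\phi_2^i$ realizes the involution $x_i \mapsto x_i^{-1}$, which sends $x_i^{m} \mapsto x_i^{-m}$ and hence negates $\rho_2^i$.
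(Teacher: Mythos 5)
Your proposal is correct and follows essentially the same route as the paper, which simply observes that $\phi_2^i$ exchanges the roles of the $x_i$ and $x_i^{-1}$ factors in \eqref{2theorem} and hence negates the exponent tracking $\rho_2^i$. Your direct computation for $i>1$ and your base-partition-swap sketch for $i=1$ just unpack that observation at the level of the combinatorial statistics, supplying details the paper leaves implicit.
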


\begin{prop}
For all nonnegative integers $i$ and $j$, $\phi_2^i\phi_2^j=\phi_2^j\phi_2^i$.
\end{prop}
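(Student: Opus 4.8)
The plan is to reduce the claim to the observation, already recorded just above, that each $\phi_2^i$ acts only on the $i$th column of $\nu$ and leaves every other column untouched. Granting this locality, the proposition is immediate: for $i \neq j$ the maps $\phi_2^i$ and $\phi_2^j$ modify disjoint columns, and neither map's definition inspects any column other than the one it rewrites, so $\phi_2^i\phi_2^j(\nu)$ and $\phi_2^j\phi_2^i(\nu)$ are literally the same array---the one obtained from $\nu$ by conjugating columns $i$ and $j$ independently. The case $i = j$ is trivial.

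First I would verify the locality claim directly from the definitions. For $i > 1$ this is transparent, since $\phi_2^i$ replaces the pair $(\lambda_i,\mu_i)$ by $(\mu_i + 2,\, \lambda_i - 2)$ (adding, respectively subtracting, $2$ from each part) and does nothing else. For $i = 1$ the first conjugation is assembled entirely from Algorithms \ref{alg:JS} and \ref{alg:JS2} applied to $\lambda_1$ and $\mu_1$, together with the $\pm 1$ shift and the hat exchange between $\lambda_1$ and $\mu_1$; none of these steps reads or alters $\lambda_i$ or $\mu_i$ for $i \geq 2$. Hence each $\phi_2^i$ is supported on column $i$ alone.

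Equivalently, one may argue on the generating-series side, which is the framing suggested by the remark preceding the proposition. Under the correspondence of Theorem \ref{2term}, column $i$ of a $B_2$-representation is produced by the factor
\begin{align*}
	\frac{(1-x_{i})}{\aqprod{x_{i}q^{2N_{k-i}}}{q^2}{n_{k-i+1}+1}}
	\cdot
	\frac{(1-x_{i}^{-1})}{\aqprod{x_{i}^{-1}q^{2N_{k-i}}}{q^2}{n_{k-i+1}+1}}
\end{align*}
of the summand in \eqref{2theorem}, and $\phi_2^i$ realizes the exchange $x_i \leftrightarrow x_i^{-1}$ of its two constituent subfactors. For distinct indices these swaps act on distinct factors of the product and therefore commute, which is exactly the assertion of the proposition.

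The single point that genuinely requires care is well-definedness: in order to form the composition one must know that $\phi_2^i(\nu)$ is again a $B_2$-representation, so that $\phi_2^j$ may legitimately be applied to it. This is the same verification already needed for the preceding proposition, and it rests on the fact that conjugating a column preserves its number of parts; consequently the monotonicity $\#(\alpha_i) \geq \#(\alpha_{i+1})$, the balance $\#(\beta_i) = \#(\alpha_i)$, and the inter-column constraints phrased through $\os_2(\lambda_1)$ and the multiplicities of largest parts are all maintained. Once this is granted, commutativity follows with no further computation.
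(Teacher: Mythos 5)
Your proposal is correct and follows essentially the same route as the paper: the paper's justification is precisely the observation that each $\phi_2^i$ exchanges the roles of the two factors indexed by $x_i$ and $x_i^{-1}$ in \eqref{2theorem}, so conjugations at distinct indices act on disjoint columns (equivalently, disjoint factors of the product) and commute. Your additional remarks on locality for $i=1$ and on well-definedness of the composition are sensible elaborations of what the paper leaves implicit, not a different argument.
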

Finally, if we define the \emph{full conjugation} to be
\begin{align*}
	\phi_2 := \prod_{i \geq 1} \phi_2^i,
\end{align*}
then $\phi_2$ is defined for all $\nu \in \mathcal{B}_2$, and $\rho_2(\phi_2(\nu)) = -\rho_2(\nu)$.

This concludes our results.

\section{Conclusion}
\label{end}

We began with the series $\Rk$ and $\RkII$, which arose from observing a pattern between the generating series of the Dyson ranks and $M_2$-ranks of overpartitions, and asked whether these new series related to the ranks of overpartitions.
By generalizing the notion of Frobenius representations of overpartitions, we found that $\Rk$ and $\RkII$ are weighted generating series for the full ranks of buffered Frobenius representations, which lie over the set of overpartitions and generalize the first and second Frobenius representations of overpartitions.
It is somewhat disappointing then that  the full rank functions are not well defined on the set of overpartitions -- compare for example
\begin{align*}
	\rho_1\left(
	\begin{pmatrix}
		\widehat{(3,3,2,1)} & (1, 0, 0 )\\
		(\oo{3},\oo{2},2,2) & (4, 1, 1)
	\end{pmatrix}
	\right)
\mbox{ and }
	\rho_1\left(
	\begin{pmatrix}
		(3,3,2,1) & (1, 0, 0 )\\
		(\oo{3},\oo{2},2,2) & (4, 1, 1)
	\end{pmatrix}
	\right).
\end{align*}
Note that the full conjugation maps are well-defined.
That is, $j(\phi_\alpha(\nu)) = j(\phi_\alpha(\nu'))$ whenever $j(\nu) = j(\nu')$, for $\alpha = 1, 2$.
Additionally, it not immediately clear why a sum weighted by roots of unity should produce a meaningful count.

One would hope that there exists a family of ``$M_k$-ranks'' of overpartitions, whose generating series are given by
\begin{multline} \label{eq:Mk}
	\sum_{n \geq 0} \sum_{m \in \ZZ} \oo{N[k]}(m,n) z^m q^n \\
	= \ovr \bigg( 1+ 2 \sum_{n=1}^\infty \frac{(1-z)(1-z^{-1})(-1)^nq^{n^2+kn}}{(1-zq^{kn})(1-z^{-1}q^{kn})} \bigg).
\end{multline}
By setting $z = 1$ in \eqref{eq:Mk}, we at least have that
\begin{align} \label{eq:endcount}
	\sum_{m \in \ZZ} \oo{N[k]}(m,n) = \oo{p}(n),
\end{align}
as expected.
It seems likely that the coefficients $\oo{N[k]}(m,n)$ are nonnegative integers, which remains open.

It is sufficient that an $M_k$-rank candidate satisfy
\begin{align*}
\sum_{n \geq 0} \overline{N[k]}(m,n)q^n  
=2 \ovr \sum_{n \geq 1} \frac{(-1)^{n+1} q^{n^2+k|m|n}(1-q^{kn})}{(1+q^{kn})},
\end{align*}
which is a generalization of Proposition 3.2 \cite{LovejoyD} and Corollary 1.3 \cite{LovejoyM2}.
We see an avenue for this work via the two interpretations of $\oo{R[2]}(z;q)$ as both the generating series of the $M_2$-ranks of overpartitions, and as the weighted generating series of the full ranks of $B_1$-representations in $\mathcal{B}_1^2$.
One might wonder if the parity of $k$ determines behavior in $\Rk$.
Perhaps understanding how to map $\mathcal{B}_1^2 \to \mathcal{F}_2$ will shed light on how to treat the rest of the $\mathcal{B}_1^k$ and $B_2^k$.
Alternatively, there may be a ``$k$th Frobenius representation'' of overpartitions closer in spirit to Lovejoy's work.

Of course, we should be interested in determining the congruences arising from any rank-like function. We may be able to use \eqref{eq:endcount} to move from congruences of buffered Frobenius representations back to congruencies of overpartitions.

There is also the question of analytics to consider.
Since the series $\Rk$ and $\RkII$ are related to overpartition ranks, and can be obtained from the $q$-hypergeometric series, it is natural to ask if these series exhibit any modular properties.
This could be investigated separately of establishing a higher $M_k$-rank.


\subsection*{Acknowledgments}
The author is very grateful to the referee for uncovering multiple errors and suggesting improvements in the presentation of the results, to Jeremy Lovejoy for careful reading of earlier drafts and many helpful comments, and to Thomas Schmidt for a useful observation for future work.


\end{document}